\documentclass[11pt]{amsart}
\textheight 620pt \textwidth 450pt
\oddsidemargin 2.5mm \evensidemargin 2.5mm
\topmargin 0mm

\usepackage{latexsym}
\usepackage{amsfonts}
\usepackage{amssymb}
\usepackage{slashed}
\usepackage{amsmath}
\usepackage{mathrsfs}
\usepackage{hyperref}
\usepackage{tikz}

\usetikzlibrary{matrix,arrows}

\newcommand{\AAZ}{{\mathcal A}_{\hbar{}}^{\infty}}

\title[Subadditivity and additivity of Yang-Mills]{Subadditivity and additivity of the Yang-Mills action
functional in noncommutative geometry}

\author{Satyajit Guin}
\address{Indian Institute of Science Education and Research, Mohali, Punjab 140306}
\email{satyamath@gmail.com\,, satyajit@iisermohali.ac.in}

\keywords{Yang-Mills functional, subadditivity of Yang-Mills, additivity of Yang-Mills, critical points of Yang-Mills, connection, curvature}

\date{\today}
\subjclass[2010]{58B34, 81T75}

%%%%%%%%%%%%%%%%%%%%%%%%%%%%%%%%%
%%%%%%%%%%%% THEOREMS ET AL
%%%%%%%%%%%%%%%%%%%%%%%%%%%%%%%%%
\newtheorem{definition}{Definition}[section]
\newtheorem{theorem}[definition]{Theorem}
\newtheorem{lemma}[definition]{Lemma}
\newtheorem{proposition}[definition]{Proposition}
\newtheorem{corollary}[definition]{Corollary}

\newtheorem{remark}[definition]{Remark}

%%%%%%%%%%%%%%%%%%%%%%%%%%%%%%%%%%%%%%%%%%%%%%%%%%%%%%%%%%%%%%%%%%%%%%

\begin{document}

\begin{abstract}
We formulate notions of subadditivity and additivity of the Yang-Mills action functional in noncommutative geometry. We identify a suitable
hypothesis on spectral triples which proves that the Yang-Mills functional is always subadditive, as per expectation. The additivity property is
much stronger in the sense that it implies the subadditivity property. Under this hypothesis we obtain a necessary and sufficient condition for
the additivity of the Yang-Mills functional. An instance of additivity is shown for the case of noncommutative $n$-tori. We also investigate the
behaviour of critical points of the Yang-Mills functional under additivity. At the end we discuss few examples involving compact spin manifolds,
matrix algebras, noncommutative $n$-torus and the quantum Heisenberg manifolds which validate our hypothesis.
\end{abstract}

\maketitle
\medskip

\section{Introduction}

Given a vector bundle $\mathbb{E}$ on a Riemannian manifold $(M,g)$ and a connection $\nabla$ on $\mathbb{E}$, the Yang-Mills functional
is given by
\begin{eqnarray}\label{classical Y-M}
\mathcal{YM}(\nabla) & = & \int_\mathbb{M}||\varTheta_\nabla||^2dV_g\,\,,
\end{eqnarray}
where $\varTheta_\nabla$ denotes the curvature of $\nabla$. Atiyah-Bott (\cite{AB}) initiated the study of corresponding gradient flow to the
Yang-Mills energy on a closed Riemann surface and proposed studying it as a means of understanding the topology of the space of connections
using infinite dimensional Morse theory. Immediately, one remarkable application of the flow appeared in Donaldson's characterization of the
correspondence between the algebraic and differential geometry on K\"ahler manifolds (\cite{Ds}). He demonstrated that the stability of a bundle
is equivalent to it admitting irreducible Hermitian-Einstein connection with respect to the K\"ahler metric. Around same time noncommutative
differential geometry was invented by A. Connes (\cite{Con0},\cite{Con1}) for the purpose of extending differential geometry and topology beyond
their classical framework in order to deal with `spaces', such as leaf spaces of foliations and orbit spaces of discrete or Lie group actions on
manifolds, which elude analysis by classical methods. The generalization of the Yang-Mills functional to the noncommutative context first appeared
in (\cite{CoR}) by the work of Connes-Rieffel for the case of $C^*$-dynamical systems. Latter Connes formulated this notion more formally in the
language of $\mathcal{K}$-cycles or spectral triples in (\cite{Con2}), and investigated the case of noncommutative two-torus in great detail which
suggests extensions of Yang-Mills theoretic techniques in the study of noncommutative differential (and possibly holomorphic) geometry of `vector
bundles' on $C^*$-algebras. It turns out that these two notions of Yang-Mills in noncommutative geometry, the older one for the $C^*$-dynamical
systems (due to Connes-Rieffel in \cite{CoR}) and the more formal one in the context of spectral triples (due to Connes in \cite{Con2}), are
equivalent for the case of noncommutative $n$-tori (\cite{CG1}) and the quantum Heisenberg manifolds (\cite{CG2}). However, the general case
remains unanswered. But certainly, the formulation of Yang-Mills in the spectral triple setting is the adequate generalization of the classical
Yang-Mills to the noncommutative framework.

In the Noncommutative Geometry programme of Connes, by a noncommutative topological space we mean an involutive subalgebra of a (unital)
$C^*$-algebra. It is now widely accepted that geometry over a noncommutative space $\mathcal{A}$ is governed by a triple $(\mathcal{A},
\mathcal{H},D)$, called spectral triple. Here, $\mathcal{A}$ is a unital associative $\star$-subalgebra of a $C^*$-algebra $\mathscr{A}$
faithfully represented on the separable Hilbert space $\mathcal{H}$, and $D$ is an unbounded self-adjoint operator with compact resolvent
acting on $\mathcal{H}$ such that $[D,a]$ extends to a bounded operator on $\mathcal{H}$ for all $a\in\mathcal{A}$. If there exists a
$\mathbb{Z}_2$-grading operator $\gamma\in\mathcal{B}(\mathcal{H})$ which commutes with $\mathcal{A}$ and anticommutes with $D$, then the
quadruple $(\mathcal{A},\mathcal{H},D,\gamma)$ is called an {\it even spectral triple}. Spectral triple generalizes classical spin manifolds
to the noncommutative framework. Here, finitely generated projective modules equipped with Hermitian structure serve the role of complex
vector bundles and the $L^2$-norm is specified by the Dixmier trace on spectral triples. The Yang-Mills action functional (\cite{Con3}) on
a finitely generated projective (f.g.p) module $\mathcal{E}$ over $\mathcal{A}$, equipped with a Hermitian structure, is a certain map
$\mathcal{YM}:C(\mathcal{E})\longrightarrow\mathbb{R}_{\geq 0}$ generalizing (\ref{classical Y-M}), where $C(\mathcal{E})$ denotes the affine
space of compatible connections on $\mathcal{E}$. Here, compatibility is described with respect to the Hermitian structure on f.g.p module
$\mathcal{E}$. A crucial application in physics is observed in (\cite{CDS}). Note that Yang-Mills represents energy functional and hence,
critical points of it is of particular interest in mathematics as well as in physics literature. These have been investigated by Rieffel
(\cite{Rfl2}) on the noncommutative two-torus and Kang (\cite{Kan}) on the quantum Heisenberg manifolds. It would not be an exaggeration
to say that Yang-Mills is an important and active area of research in noncommutative geometry, and over the years it has been studied by
various authors (e.g. \cite{CoR},\cite{Rfl2},\cite{Sp},\cite{Kan},\cite{Lee},\cite{CG1},\cite{CG2}).

Since the domain of the Yang-Mills functional (henceforth briefly abbreviated as Y-M) is an affine space, the usual notions of subadditivity
and additivity of a function do not make sense. We systematically formulate these notions and prove that under a suitable hypothesis on spectral
triples Y-M is always subadditive. This is expected since Y-M represents energy functional. The notion of additivity turns out to be stronger
than subadditivity in the sense that additivity implies subadditivity. Let us briefly describe our setting. Like in the classical case where
forming the product between two geometric spaces is a basic operation in geometry, considering tensor product of noncommutative spaces is also
of much relevant importance not only for construction of a would-be tensor category, but also bears interest for some applications in theoretical
physics (\cite{CM}). For example, the almost commutative spectral triple corresponding to the standard model of particle physics (\cite{CC})
is a tensor product of a canonical commutative spectral triple with a finite-dimensional noncommutative one. Given two even spectral triples
$(\mathcal{A}_j,\mathcal{H}_j,D_j,\gamma_j),\,j=1,2$, their product is defined by the following rule, due to Connes (\cite{Con2}),
\begin{center}
$(\mathcal{A}_1,\mathcal{H}_1,D_1,\gamma_1)\otimes(\mathcal{A}_2,\mathcal{H}_2,D_2,\gamma_2):=(\mathcal{A}_1\otimes \mathcal{A}_2\,,\,\mathcal{H}_1
\otimes\mathcal{H}_2\,,\,D=D_1\otimes 1+\gamma_1\otimes D_2\,,\,\gamma_1\otimes\gamma_2)\,.$
\end{center}
It is enough if one of the spectral triples, instead of both, is even. However, in our context, w.l.o.g. we always consider even spectral
triples. This is explained at the beginning of Section $[3]$. Now, if $\mathcal{E}_1$ and $\mathcal{E}_2$ are two Hermitian f.g.p modules over
$\mathcal{A}_1$ and $\mathcal{A}_2$ respectively, and $\nabla_j\in C(\mathcal{E}_j)$ for $j=1,2$, then $\nabla:=\nabla_1\otimes 1+1\otimes
\nabla_2$ is a connection on $\,\mathcal{E}=\mathcal{E}_1\otimes\mathcal{E}_2$. Important observation is that it is a compatible connection,
i,e. $\nabla\in C(\mathcal{E})$, with respect to a natural Hermitian structure on $\mathcal{E}$. We use the structure theorem of Hermitian
f.g.p modules obtained in (\cite{CG1}) to prove this. A natural question is whether there is any relation between $\mathcal{YM}(\nabla)$ and
$\mathcal{YM}(\nabla_j)$ for $j=1,2$. We define the notions of subadditivity and additivity of Y-M in this context. Under the following
hypothesis on spectral triples $(\mathcal{A}_j,\mathcal{H}_j,D_j,\gamma_j),\,j=1,2$,
\medskip

\textbf{Hypothesis~:} $\frac{\pi_1(\Omega^2(\mathcal{A}_1))\otimes\mathcal{A}_2+\mathcal{A}_1\otimes\pi_2(\Omega^2(\mathcal{A}_2))}{\pi_1(d_1J_0^1
(\mathcal{A}_1))\otimes\mathcal{A}_2+\mathcal{A}_1\otimes\pi_2(d_2J_0^1(\mathcal{A}_2))}\cong\Omega_{D_1}^2(\mathcal{A}_1)\otimes\mathcal{A}_2
\bigoplus\mathcal{A}_1\otimes\Omega_{D_2}^2(\mathcal{A}_2)$ as $\mathcal{A}_1\otimes\mathcal{A}_2$-bimodules.
\medskip

we prove that Y-M is always subadditive, as per expectation. To validate this hypothesis we discuss few examples involving compact spin manifolds,
matrix algebras, noncommutative $n$-tori and the quantum Heisenberg manifolds. These are the cases for which the respective Dirac dga is
known in the literature. Under the above hypothesis we also obtain a necessary and sufficient condition for additivity of Y-M. An instance of
additivity of Y-M is shown for the case of noncommutative $n$-tori. It is also natural to ask if Y-M becomes additive then how its critical
points behave. For this we obtain a useful necessary and sufficient condition which determines when the critical points of Y-M for two spectral
triples give rise to a critical point of Y-M for the product spectral triple.

Organization of the paper is as follows. Section $[2]$ is mainly preliminaries. Sections $[3]$ is the main content where we define the notions
of subadditivity and additivity of Y-M and prove the above discussed results. Section $[4]$ contains an instance of additivity of Y-M. Section
$[5],\,[6],\,[7]$ discuss examples  where our hypothesis is validated. These include the case of compact spin manifolds, matrix algebras,
noncommutative $n$-torus and the quantum Heisenberg manifolds.
\bigskip

%%%%%%%%%%%%%%%%%%%%%%%%%%%   Spectral triples, Dirac dga, Yang-Mills etc.  %%%%%%%%%%%%%%%%%%%%%%%%%%%%%%%%%%%%%%%%%%%%%%%%%%%%%%%%%%%%%%%%

\section{Spectral triples and the Yang-Mills functional}

All algebras considered in this article will be assumed unital.

\begin{definition}
A spectral triple $(\mathcal{A},\mathcal{H},D)$ over a unital, associative $\star$-algebra $\mathcal{A}$ consists of the following$\,:$
\begin{enumerate}
\item a $\star$-representation $\pi$ of $\mathcal{A}$ on the separable Hilbert space $\mathcal{H}$,
\item an unbounded self-adjoint operator $D$ acting on $\mathcal{H}$ such that
\begin{enumerate}
\item[(i)] $[D,\pi(a)]$, initially defined on $Dom(D)$, extends to a bounded operator on $\mathcal{H}$,
\item[(ii)] $D$ has compact resolvent.
\end{enumerate}
\end{enumerate}
\end{definition}

It is said to be an {\it even spectral triple} if there exists a $\mathbb{Z}_2$-grading $\gamma\in\mathcal{B}(\mathcal{H})$ (i,e. $\gamma=\gamma^*$
and $\gamma^2=id$) such that $\gamma$ commutes with each element of $\mathcal{A}$ and anticommutes with $D$. If no such $\gamma$ is present then
the spectral triple $(\mathcal{A},\mathcal{H},D)$ is called {\it odd}. We will always assume that $\pi$ is a unital faithful representation.

\begin{definition}
If $\,|D|^{-d}$, for positive number $d$, lies in the Dixmier ideal $\,\mathcal{L}^{(1,\infty)}\subseteq\mathcal{B}(\mathcal{H})$, then the
spectral triple $(\mathcal{A},\mathcal{H},D)$ is called $d$-summable spectral triple $($this is sometimes referred as  $(d,\infty)$ or
$d^+$-summable in the literature$)$.
\end{definition}

Associated to every spectral triple $(\mathcal{A},\mathcal{H},D)$ there is a differential graded algebra (dga) $(\Omega^\bullet_D(\mathcal{A}),d\,)$
defined by Connes, which we will call the Connes' calculus or the Dirac DGA. Recall its definition from (Ch. $[6]$ in \cite{Con2}). However,
for our purpose, the following is enough
\begin{align*}
\Omega_D^1(\mathcal{A}) &= \{\,\sum a_j[D,b_j]\,:\,a_j,b_j\in\mathcal{A}\,\}\,\subseteq\,\mathcal{B}(\mathcal{H})\\
\pi(\Omega^2(\mathcal{A})) &= \{\,\sum a_j[D,b_j][D,c_j]\,:\,a_j,b_j,c_j\in\mathcal{A}\,\}\,\subseteq\,\mathcal{B}(\mathcal{H})\\
\pi(dJ_0^1(\mathcal{A})) &= \{\,\sum [D,b_j][D,c_j]\,:\,\sum b_j[D,c_j]=0\,,b_j,c_j\in\mathcal{A}\}\,\subseteq\,\mathcal{B}(\mathcal{H})\\
\Omega_D^2(\mathcal{A}) &= \pi(\Omega^2(\mathcal{A}))/\pi(dJ_0^1(\mathcal{A}))\,.
\end{align*}
All of these are bimodules over $\mathcal{A}$. We have the Dirac dga differentials $d:\mathcal{A}\longrightarrow\Omega_D^1(\mathcal{A})$ given
by $a\longmapsto[D,a]$, and $d:\Omega_D^1(\mathcal{A})\longrightarrow\Omega_D^2(\mathcal{A})$ given by $a[D,b]\longmapsto[\,[D,a][D,b]\,]$. Note
that $(da)^*=-d(a^*)$ by convention. For the classical case of compact spin manifolds, where $D$ is the classical Dirac operator, $\Omega^\bullet_D$
gives back the space of de-Rham forms (Page $551$ in \cite{Con2}). So, Dirac dga can be thought of as noncommutative space of forms. However, this
dga is very hard to compute and not much of computation is known in the literature except (\cite{CS},\cite{CG1},\cite{CG2},\cite{CG3}). Using
this Dirac dga, Connes extended the classical notion of Yang-Mills action functional to the noncommutative geometry framework in (\cite{Con2}).
Let us recall it now.
\medskip

Let $\mathcal{E}$ be a finitely generated projective right module over $\mathcal{A}$. We will write f.g.p to mean finitely generated projective
throughout the article. Unless explicitly mentioned, we will only consider right modules in this article. Let $\mathcal{E}^*:=\mathcal{H}om_\mathcal{A}
(\mathcal{E},\mathcal{A})$. Clearly, $\mathcal{E}^*$ is also a right $\mathcal{A}$-module by the rule $\,(\phi\,.\,a)\,(\eta):=a^*\phi(\eta),
\,\forall\,\eta\in\mathcal{E},a\in\mathcal{A}$.

\begin{definition}\label{def}$($\cite{Con2}$)$
A {\it Hermitian} structure on $\mathcal{E}$ is an $\mathcal{A}$-valued positive-definite sesquilinear map $\langle\,.\,,.\,\rangle_{\mathcal{A}}$
satisfying the following $:$
\begin{enumerate}
\item [(a)] $\langle\xi,\xi'\rangle_{\mathcal{A}}^*=\langle\xi',\xi\rangle_{\mathcal{A}}\,,\quad\forall\,\xi,\xi'\in\mathcal{E}$.
\item [(b)] $\langle\xi,\xi'.\,a\rangle_{\mathcal{A}}=(\langle\xi,\xi'\rangle_{\mathcal{A}})a\,,\quad\forall\,\xi,\xi'\in\mathcal{E},\,\,\forall\,a
\in\mathcal{A}$.
\item [(c)] The map $\,\xi\longmapsto\Phi_\xi$ from $\mathcal{E}$ to $\mathcal{E}^*$, given by $\Phi_\xi(\eta)=\langle\xi,\eta\rangle_\mathcal{A}$
for all $\eta\in\mathcal{E}$, gives conjugate linear $\mathcal{A}$-module isomorphism between $\mathcal{E}$ and $\mathcal{E}^*$. This property
will be referred as the self-duality of $\mathcal{E}$.
\end{enumerate}
\end{definition}
Any free $\mathcal{A}$-module $\mathcal{E}_0=\mathcal{A}^n$ has a Hermitian structure on it, given by $\langle\,\xi,\eta\,\rangle_\mathcal{A}=
\sum_{j=1}^n\xi_j^*\eta_j$ for all $\,\xi=(\xi_1,\ldots,\xi_n)\,,\,\eta=(\eta_1,\ldots,\eta_n)\in\mathcal{E}_0$. We refer this as the {\it canonical
Hermitian structure} on $\mathcal{E}_0$. By definition, any f.g.p module $\mathcal{E}$ can be written as $\mathcal{E}=p\mathcal{A}^n$ for some
idempotent $p\in M_n(\mathcal{A})$. If this idempotent $p$ is a projection, i,e. $p=p^2=p^*$, one can restrict the canonical Hermitian structure on
$\mathcal{A}^n$ to $\mathcal{E}$ and then $\mathcal{E}$ becomes a Hermitian f.g.p module. Moreover, it is proved in Lemma $2.2(b)$ in (\cite{CG1})
that if $\mathcal{A}$ is stable under the holomorphic functional calculus in a unital $C^*$-algebra $\mathscr{A}$ (in which case the unit will
belong to $\mathcal{A}$), then we have the following existence lemma of Hermitian structure.

\begin{lemma}[\cite{CG1}]
Every f.g.p module $\mathcal{E}$ over $\mathcal{A}$ is isomorphic as a f.g.p module with $p\mathcal{A}^n$ where $p\in M_n(\mathcal{A})$ is a
self-adjoint idempotent, that is a projection. Hence, $\mathcal{E}$ has a Hermitian structure on it.
\end{lemma}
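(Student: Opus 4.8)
The plan is to reduce the statement to one algebraic fact: an idempotent in $M_n(\mathcal{A})$ is similar, through an invertible element of $M_n(\mathcal{A})$ itself, to a self-adjoint idempotent. By definition of finite generation and projectivity, there is an idempotent $e\in M_n(\mathcal{A})$ (a priori not self-adjoint) with $\mathcal{E}\cong e\mathcal{A}^n$ as right $\mathcal{A}$-modules. If I can produce a projection $p\in M_n(\mathcal{A})$ together with mutually inverse module maps realizing $e\mathcal{A}^n\cong p\mathcal{A}^n$, then composition yields $\mathcal{E}\cong p\mathcal{A}^n$, and the Hermitian structure follows by restricting the canonical one on $\mathcal{A}^n$ exactly as described in the paragraph preceding the statement.

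First I would run the classical polishing of an idempotent inside the ambient $C^*$-algebra $M_n(\mathscr{A})\supseteq M_n(\mathcal{A})$. Set $z=1-(e-e^*)^2$. Since $e-e^*$ is skew-adjoint, $(e-e^*)^2\le 0$, so $z\ge 1$ is positive and invertible in $M_n(\mathscr{A})$; expanding with $e^2=e$ and $(e^*)^2=e^*$ gives $z=1-e-e^*+ee^*+e^*e$, a polynomial in $e$ and $e^*$, hence $z\in M_n(\mathcal{A})$. A direct computation shows $ze=ez=ee^*e$ and $e^*z=ze^*=e^*ee^*$, so $z$ commutes with both $e$ and $e^*$. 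I would then put $p:=ee^*z^{-1}=z^{-1}ee^*$ and check directly that $p^*=p$, that $pe=e$ and $ep=p$, and consequently $p^2=p(ep)=(pe)p=ep=p$. Thus $p$ is a projection equivalent to $e$, and the maps $\xi\mapsto p\xi$ on $e\mathcal{A}^n$ and $\eta\mapsto e\eta$ on $p\mathcal{A}^n$ are mutually inverse $\mathcal{A}$-linear isomorphisms, giving $e\mathcal{A}^n\cong p\mathcal{A}^n$.

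The hard part, and really the only nontrivial point, is to guarantee that $p$ lands in $M_n(\mathcal{A})$ rather than merely in $M_n(\mathscr{A})$. This is precisely where the hypothesis that $\mathcal{A}$ is stable under holomorphic functional calculus in $\mathscr{A}$ is used: this stability passes from $\mathcal{A}\subseteq\mathscr{A}$ to $M_n(\mathcal{A})\subseteq M_n(\mathscr{A})$, and since $z\in M_n(\mathcal{A})$ is invertible in the $C^*$-algebra with spectrum bounded away from $0$, its inverse is obtained by holomorphic functional calculus and therefore lies in $M_n(\mathcal{A})$. Hence $p=ee^*z^{-1}\in M_n(\mathcal{A})$ is a genuine projection over $\mathcal{A}$, and the equivalence $e\mathcal{A}^n\cong p\mathcal{A}^n$ is implemented inside $M_n(\mathcal{A})$.

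Finally I would record the consequence: once $\mathcal{E}\cong p\mathcal{A}^n$ with $p$ self-adjoint, restricting the canonical Hermitian structure $\langle\xi,\eta\rangle_{\mathcal{A}}=\sum_j\xi_j^*\eta_j$ on $\mathcal{A}^n$ to the direct summand $p\mathcal{A}^n$ gives an $\mathcal{A}$-valued positive-definite sesquilinear form satisfying (a) and (b) of Definition \ref{def}; the self-duality (c) holds because $p$ is a projection, so $p\mathcal{A}^n$ is an orthogonal direct summand of the self-dual free module $\mathcal{A}^n$. This equips $\mathcal{E}$ with a Hermitian structure, completing the proof.
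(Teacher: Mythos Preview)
Your argument is correct: this is the standard Kaplansky construction, and every step checks out (including the verification that $pe=e$, $ep=p$ give mutually inverse module maps, and the use of spectral invariance of $M_n(\mathcal{A})\subseteq M_n(\mathscr{A})$ to keep $z^{-1}$, hence $p$, in $M_n(\mathcal{A})$). The paper does not supply its own proof here---it simply cites Lemma~2.2(b) of \cite{CG1}---so there is nothing substantive to compare; your write-up is exactly the argument one expects that reference to contain.
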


\begin{remark}\rm
The above lemma proves the existence of Hermitian structure with the assumption of closure under holomorphic functional calculus. Without the
assumption, the existence of Hermitian structure on arbitrary f.g.p module over $\mathcal{A}$ is not known.
\end{remark}

With the assumption of closure under the holomorphic functional calculus, one has the following structure theorem of Hermitian f.g.p module.
(Th. $3.3$ in \cite{CG1}).

\begin{theorem}[\cite{CG1}]\label{structure thm of herm. mod.}
Let $\mathcal{E}$ be a f.g.p $\mathcal{A}$-module with a Hermitian structure on it. Suppose $\mathcal{A}$ is stable under the holomorphic
functional calculus in a $C^*$-algebra $\mathscr{A}$. Then we have a self-adjoint idempotent $\,p\in M_n(\mathcal{A})$ such that $\mathcal{E}
\cong p\mathcal{A}^n$ as f.g.p module, and $\mathcal{E}$ has the induced canonical Hermitian structure.
\end{theorem}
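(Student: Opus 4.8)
The plan is to realise the module isomorphism supplied by the preceding existence lemma by an \emph{isometric} map, i.e. one intertwining the given Hermitian structure with the canonical structure on a free module, and then to read off the desired projection as the Gram matrix of a normalised generating frame. First I would use finite generation to fix generators $\eta_1,\dots,\eta_n$ of $\mathcal{E}$ and form the surjective right $\mathcal{A}$-linear map $T\colon\mathcal{A}^n\to\mathcal{E}$, $T(a_1,\dots,a_n)=\sum_j\eta_j a_j$. Using properties (a)--(b) of Definition~\ref{def} one checks that $T^\ast\colon\mathcal{E}\to\mathcal{A}^n$, $T^\ast\xi=(\langle\eta_1,\xi\rangle_\mathcal{A},\dots,\langle\eta_n,\xi\rangle_\mathcal{A})$, is the genuine adjoint, so that the frame operator $S:=TT^\ast\in\mathrm{End}_\mathcal{A}(\mathcal{E})$, acting by $S\xi=\sum_j\eta_j\langle\eta_j,\xi\rangle_\mathcal{A}$, is self-adjoint and positive.

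The crux, and the step I expect to be the main obstacle, is to show that $S$ is invertible with $S^{-1/2}\in\mathrm{End}_\mathcal{A}(\mathcal{E})$. Positivity alone is insufficient (a positive injective endomorphism of a $C^\ast$-module need not be invertible), so here I would exploit that the $\eta_j$ \emph{generate} $\mathcal{E}$: after completing $\mathcal{E}$ to a finitely generated projective Hilbert $C^\ast$-module $\overline{\mathcal{E}}$ over $\mathscr{A}$, the map $T$ becomes a surjective adjointable operator, whence $T^\ast$ is bounded below and $S=TT^\ast\geq c\,\mathrm{id}$ for some $c>0$; thus $S$ is invertible in $\mathrm{End}_\mathscr{A}(\overline{\mathcal{E}})$. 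To descend to the smooth level I would invoke the hypothesis that $\mathcal{A}$ is stable under the holomorphic functional calculus in $\mathscr{A}$: writing $\mathcal{E}\cong p_0\mathcal{A}^m$ via the existence lemma gives $\mathrm{End}_\mathcal{A}(\mathcal{E})\cong p_0 M_m(\mathcal{A})p_0$, and since stability passes to matrix algebras and to corners, $\mathrm{End}_\mathcal{A}(\mathcal{E})$ is itself stable in $\mathrm{End}_\mathscr{A}(\overline{\mathcal{E}})$. As the spectrum of $S$ is a compact subset of $(0,\infty)$, the function $z\mapsto z^{-1/2}$ is holomorphic near it, so $S^{-1/2}$ lies in $\mathrm{End}_\mathcal{A}(\mathcal{E})$.

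With $S^{-1/2}$ in hand I would normalise the frame by setting $\tilde\eta_j:=S^{-1/2}\eta_j\in\mathcal{E}$. Since $S^{-1/2}$ is self-adjoint for $\langle\cdot,\cdot\rangle_\mathcal{A}$, a direct computation gives $\sum_j\tilde\eta_j\langle\tilde\eta_j,\xi\rangle_\mathcal{A}=S^{-1/2}SS^{-1/2}\xi=\xi$, so $\{\tilde\eta_j\}$ is a Parseval (tight) frame with reproducing property. I would then define $V\colon\mathcal{E}\to\mathcal{A}^n$ by $V\xi=(\langle\tilde\eta_1,\xi\rangle_\mathcal{A},\dots,\langle\tilde\eta_n,\xi\rangle_\mathcal{A})$; this is right $\mathcal{A}$-linear, and the reproducing property yields $\langle V\xi,V\xi'\rangle_\mathcal{A}=\langle\xi,\xi'\rangle_\mathcal{A}$ for the canonical structure on $\mathcal{A}^n$, so $V$ is isometric with $V^\ast V=\mathrm{id}_\mathcal{E}$. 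Finally $p:=VV^\ast=(\langle\tilde\eta_j,\tilde\eta_k\rangle_\mathcal{A})_{j,k}\in M_n(\mathcal{A})$ is a self-adjoint idempotent, and $V$ is an isometric module isomorphism of $\mathcal{E}$ onto $p\mathcal{A}^n$. Because $V$ preserves the Hermitian structures, the given structure on $\mathcal{E}$ is carried exactly to the restriction of the canonical one on $\mathcal{A}^n$, namely the induced canonical Hermitian structure on $p\mathcal{A}^n$, which is the assertion. The role of the self-duality condition (c) in Definition~\ref{def} is to make $\mathcal{E}$ a genuine self-dual Hilbert module, so that the adjoints $T^\ast$ and $V^\ast$ exist and the completion argument is legitimate.
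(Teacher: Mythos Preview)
The paper does not supply its own proof of this statement: Theorem~\ref{structure thm of herm. mod.} is quoted from \cite{CG1} (Theorem~3.3 there) and stated without argument, so there is no in-paper proof to compare against. Your frame-normalisation argument is a standard and correct route to the result, and indeed this is essentially how such structure theorems are proved in the Hilbert-module literature.

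One point deserves a little more care. You pass to the completion $\overline{\mathcal{E}}$ with respect to the \emph{given} Hermitian structure and simultaneously invoke the identification $\mathrm{End}_\mathcal{A}(\mathcal{E})\cong p_0M_m(\mathcal{A})p_0$ coming from the existence lemma; for the holomorphic-functional-calculus step you need these two pictures to be compatible, i.e.\ that $\overline{\mathcal{E}}\cong p_0\mathscr{A}^m$ and hence $\mathrm{End}_\mathscr{A}(\overline{\mathcal{E}})\cong p_0M_m(\mathscr{A})p_0$. This is not automatic, since the norm from $\langle\cdot,\cdot\rangle_\mathcal{A}$ need not a priori be equivalent to the canonical one. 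The cleanest way to close this gap is exactly via the self-duality condition (c) that you invoke at the end: writing $\mathcal{E}\cong p_0\mathcal{A}^m$ as modules, both the given and the canonical Hermitian structures induce conjugate-linear isomorphisms $\mathcal{E}\to\mathcal{E}^\ast$, and their composite is an invertible $H\in\mathrm{End}_\mathcal{A}(\mathcal{E})$ with $\langle\xi,\eta\rangle_\mathcal{A}=\langle\xi,H\eta\rangle_{\mathrm{can}}$; invertibility of $H$ then gives the required equivalence of norms (and, incidentally, $H^{1/2}$ furnishes the isometry directly, which is a slight shortcut over the frame-operator route). With this clarification your argument goes through.
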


In his book (\cite{Con2}), Connes has suggested that in the context of Hermitian structure and the Yang-Mills functional one should always work
with spectrally invariant algebras, that is subalgebras of $C^*$-algebras stable under the holomorphic functional calculus. The reason is that
all possible notions of positivity will coincide in that case. Moreover, we will also have Th. (\ref{structure thm of herm. mod.}) which makes
computation with the Hermitian structure much easier. Hence, incorporating Connes' suggestion, throughout the article we will always work with
spectrally invariant algebras. Note that in the classical situation of manifolds, $C^\infty(\mathbb{M})$ is indeed spectrally invariant subalgebra
of $C(\mathbb{M})$ (\cite{GVF}).

\begin{definition}
Let $\mathcal{E}$ be a f.g.p module over $\mathcal{A}$ equipped with a Hermitian structure $\langle\,.,.\,\rangle_\mathcal{A}$. A compatible
connection on $\mathcal{E}$ is a $\mathbb{C}$-linear map $\nabla:\mathcal{E}\longrightarrow\mathcal{E}\,\otimes_\mathcal{A}\Omega _{D}^1
(\mathcal{A})$ satisfying
\begin{enumerate}
\item[(a)] $\nabla(\xi a)=(\nabla\xi)a+\xi\otimes da,\quad\forall\,\xi\in\mathcal{E},\,a\in\mathcal{A};$
\item[(b)] $\langle\,\xi,\nabla\eta\,\rangle-\langle\,\nabla\xi,\eta\,\rangle=d\langle\,\xi,\eta\,\rangle_\mathcal{A}\quad\forall\,\xi,\eta\in
\mathcal{E}\,\,\,\,($Compatibility$)$.
\end{enumerate}
\end{definition}

The meaning of the last equality in $\Omega_D^1(\mathcal{A})$ is, if $\nabla(\xi)=\sum\xi_j\otimes\omega_j\in\mathcal{E}\otimes\Omega_D^1
(\mathcal{A})$, then $\langle\nabla\xi,\eta\rangle=\sum\omega_j^*\langle\xi_j,\eta\rangle_\mathcal{A}$. Any f.g.p right module has a connection.
An example of a compatible connection is the {\it Grassmannian connection} $\nabla_0$ on $\mathcal{E}=p\mathcal{A}^n$, given by $\nabla_0(\xi)=
pd\xi$, where $d\xi=(d\xi_1,\ldots,d\xi_n)$ and $p\in M_n(\mathcal{A})$ is a projection. This connection is compatible with the induced canonical
Hermitian structure on $\mathcal{E}$. The set of all compatible connections on $\,\mathcal{E}$, which we denote by $C(\mathcal{E})$, is an affine
space with associated vector space $\mathcal{H}om_\mathcal{A}(\mathcal{E},\mathcal{E}\otimes_\mathcal{A}\Omega_D^1(\mathcal{A}))$ (\cite{Con2}).
Any connection $\nabla$ uniquely extends to a $\mathbb{C}$-linear map
\begin{center}
$\widetilde{\nabla}:\mathcal{E}\otimes_\mathcal{A}\Omega_D^1(\mathcal{A})\longrightarrow\mathcal{E}\otimes_\mathcal{A}\Omega_D^2(\mathcal{A})$
\end{center}
satisfying
\begin{eqnarray*}
\widetilde{\nabla}(\xi\otimes\omega)=(\nabla\xi)\omega+\xi\otimes d\omega,\quad\forall\,\xi\in\mathcal{E},\,\,\omega\in\Omega_D^1(\mathcal{A}).
\end{eqnarray*}
It can be easily checked that $\widetilde\nabla$, defined above, satisfies the Leibniz rule, i,e.
\begin{eqnarray*}
\widetilde{\nabla}(\eta a)=\widetilde{\nabla}(\eta)a-\eta\tilde{d}a\,,\,\,\forall\,a\in\mathcal{A},\eta\in\mathcal{E}\otimes\Omega_D^1(\mathcal{A})\,. 
\end{eqnarray*}
A simple computation shows that $\,\varTheta:=\widetilde{\nabla}\circ\nabla$ is an element of $\mathcal{H}om_\mathcal{A}(\mathcal{E},\mathcal{E}
\otimes_\mathcal{A}\Omega_D^2(\mathcal{A}))$, i,e. it is $\mathcal{A}$-linear.

\begin{definition}
$\varTheta$ defined above is called the curvature of the connection $\nabla$.
\end{definition}

\textbf{The inner-product on} $\mathbf{\mathcal{H}om_\mathcal{A}(\mathcal{E},\mathcal{E}\otimes_\mathcal{A}\Omega_D^2):}$ First recall that
$\Omega_D^2\cong\pi(\Omega^2)/\pi(dJ_0^{(1)})$. Let $\mathcal{H}^\prime$ be the Hilbert space completion of $\pi(\Omega^2)$ with the inner-product 
\begin{eqnarray}\label{inner-product by Dixmier trace}
\langle T_1,T_2\rangle:= Tr_\omega(T_1^*T_2|D|^{-d}),\,\forall\,T_1,T_2\in\pi(\Omega^2) 
\end{eqnarray}
where $Tr_\omega$ denotes the Dixmier trace. Let $P$ be the orthogonal projection of $\mathcal{H}^\prime$ onto the orthogonal complement of the
subspace $\pi(dJ_0^{(1)})\subseteq\pi(\Omega^2)$. Now, define $\langle\,[T_1],[T_2]\,\rangle_{\Omega_D^2}:=\langle PT_1,PT_2\rangle,\,$ for all
$[T_j]\in\Omega_D^2$. This gives a well defined inner-product on $\Omega_D^2\,$. Viewing $\mathcal{E}=p\mathcal{A}^n$ we see that $\mathcal{H}om_\mathcal{A}
(\mathcal{E},\mathcal{E}\otimes_\mathcal{A}\Omega_D^2)=\mathcal{H}om_\mathcal{A}(p\mathcal{A}^n,p\mathcal{A}^n\otimes_\mathcal{A}\Omega_D^2)
\cong\mathcal{H}om_\mathcal{A}(pA^n,p(\Omega_D^2)^n)\,$, which is contained in $\mathcal{H}om_\mathcal{A}(\mathcal{A}^n,(\Omega_D^2)^n)$. Now,
for $\phi,\psi\in\mathcal{H}om_\mathcal{A}(\mathcal{E},\mathcal{E}\otimes_\mathcal{A}\Omega_D^2)$, define the inner-product as
\begin{eqnarray}\label{inner-product on curvature space}
\langle\langle\phi,\psi\rangle\rangle & := & \sum_{k=1}^n\sum_{i=1}^n\langle\,(\phi(e_k))_i\,,\,(\psi(e_k))_i\,\rangle_{\Omega_D^2}
\end{eqnarray}
where $\{e_1,\ldots,e_n\}$ is the standard canonical basis of the free module $\mathcal{A}^n$ over $\mathcal{A}$.

\begin{definition}\label{YM functional}
The Yang-Mills action functional is a map $\,\mathcal{YM}:C(\mathcal{E})\longrightarrow\mathbb{R}_{\geq 0}$ given by
\begin{center}
$\mathcal{YM}\,(\nabla)=\langle\langle\,\varTheta,\varTheta\,\rangle\rangle\,.$
\end{center}
\end{definition}

\begin{remark}\rm
The definition of $\mathcal{YM}$ does not depend on the choice of the projection used to describe $\mathcal E$. This is discussed in Remark
$[4.3]$ in (\cite{CG1}).
\end{remark}

Let $\nabla_t=\nabla+t\mu$ be a linear perturbation of a connection $\nabla$ on $\mathcal{E}$ by an element $\,\mu\in\mathcal{H}om_\mathcal{A}
(\mathcal{E}\,,\,\mathcal{E}\otimes_\mathcal{A}\Omega_D^1(\mathcal{A}))$. One can check that the curvature $\,\varTheta_t$ of the connection
$\nabla_t$ becomes $\varTheta+t[\nabla,\mu]+\mathcal{O}(t^2)$. If we suppose that $\nabla$ is an extremum of the Yang–Mills action functional,
this linear perturbation should not affect the action. In other words, we should have
\begin{center}
$\frac{d}{dt}|_{t=0}\,\mathcal{YM}(\nabla+t\mu)=0\,.$
\end{center}
From here it follows that $\langle\langle[\nabla,\mu],\varTheta\rangle\rangle=0$, where $\langle\langle\,\,,\,\rangle\rangle$ is the inner-product
on $\,Hom_\mathcal{A}(\mathcal{E},\mathcal{E}\otimes_\mathcal{A}\Omega_D^2)$ described in (\ref{inner-product on curvature space}). Here,
$[\nabla,\mu]=\widetilde{\nabla}\circ\mu+(1\otimes\Pi)\circ(\mu\otimes 1)\circ\nabla$, with $\,\Pi:\Omega_D^1\times\Omega_D^1\longrightarrow
\Omega_D^2$ being the product map of the Dirac dga. One can check that $[\nabla,\mu]$ is indeed $\mathcal{A}$-linear. Since $\mu$ is arbitrary,
we derive the equation of motion $[\nabla^*,\varTheta]=0$, where the adjoint of $\,[\nabla,.]$ is defined by
\begin{center}
$\langle\langle[\nabla^*,\alpha],\beta\rangle\rangle=\langle\langle\alpha,[\nabla,\beta]\rangle\rangle\,.$
\end{center}
For detail on these we refer (\cite{Lan}).

\begin{definition}\label{critical point}
A compatible connection $\nabla\in C(\mathcal{E})$ is called a critical point of Yang-Mills action functional if
\begin{center}
$\frac{d}{dt}|_{t=0}\,\mathcal{YM}(\nabla+t\mu)=0$
\end{center}
for all $\,\mu\in Hom_\mathcal{A}(\mathcal{E},\mathcal{E}\otimes_\mathcal{A}\Omega_D^1(\mathcal{A}))$.
\end{definition}
\bigskip

%%%%%%%%%%%%%%%%%%%%%%%%%%%%%%%%%%%%%%%%    Yang-Mills functional for tensor product   %%%%%%%%%%%%%%%%%%%%%%%%%%%%%%%%%%%%%%%%

\section{Subadditivity and additivity of the Yang-Mills functional}

Let $(\mathcal{A}_1,\mathcal{H}_1,D_1,\gamma_1)$ and $(\mathcal{A}_2,\mathcal{H}_2,D_2,\gamma_2)$ be two even spectral triples. The product of
these, due to Connes (\cite{Con2}), is given by the following even spectral triple
\begin{eqnarray}\label{mult of spec}
&    & (\mathcal{A}_1,\mathcal{H}_1,D_1,\gamma_1)\otimes(\mathcal{A}_2,\mathcal{H}_2,D_2,\gamma_2)\nonumber\\
& := & (\mathcal{A}=\mathcal{A}_1\otimes \mathcal{A}_2\,,\,\mathcal{H}=\mathcal{H}_1\otimes\mathcal{H}_2\,,D=D_1\otimes 1+\gamma_1\otimes D_2
\,,\,\gamma=\gamma_1\otimes\gamma_2)\,.
\end{eqnarray}

At this point one should note the following.
\begin{enumerate}
\item One can also consider the following multiplication formula for even spectral triples
\begin{eqnarray*}
&   & (\mathcal{A}_1,\mathcal{H}_1,D_1,\gamma_1)\otimes(\mathcal{A}_2,\mathcal{H}_2,D_2,\gamma_2)\\
& = & (\mathcal{A}_1\otimes\mathcal{A}_2\,,\,\mathcal{H}_1\otimes\mathcal{H}_2\,,D^\prime=D_1\otimes\gamma_2+1\otimes D_2\,,\,\gamma_1\otimes\gamma_2)\,.
\end{eqnarray*}
In this case there exists a unitary $U\in\mathcal{B}(\mathcal{H}_1\otimes\mathcal{H}_2)$ given by
\begin{center}
$U:=\frac{1}{2}(1\otimes 1+\gamma_1\otimes 1+1\otimes\gamma_2-\gamma_1\otimes\gamma_2)$,
\end{center}
such that the spectral triples $(\mathcal{A}_1\otimes\mathcal{A}_2\,,\,\mathcal{H}_1\otimes\mathcal{H}_2\,,D)$ and $(\mathcal{A}_1\otimes
\mathcal{A}_2\,,\,\mathcal{H}_1\otimes\mathcal{H}_2\,,D^\prime)$ become unitary equivalent, i,e $\,UDU^*=D^\prime$ (\cite{Van}).
\item\label{from odd to even} If one starts with an {\it odd} spectral triple $(\mathcal{A},\mathcal{H},D)$, i,e. without the grading operator, then
one can construct an {\it even} spectral triple $(\mathcal{A},\widetilde{\mathcal{H}}=\mathcal{H}\otimes\mathbb{C}^2,\widetilde{D},\gamma)$ using
any two $2\times 2$ Pauli spin matrices such that $\,\Omega_D^\bullet(\mathcal{A})\cong\Omega_{\widetilde{D}}^\bullet(\mathcal{A})$ as dgas
(Lemma $2.7$ in \cite{CG3}). Therefore, when working with Dirac dga, one can w.l.o.g. assume that the spectral triple is always {\it even}.
\item If one of the spectral triple is {\it even} and the other is {\it odd} then the multiplication rule (\ref{mult of spec}) is still well-defined.
Only difference is that the resulting product spectral triple is now {\it odd}.
\item Let $\,\sigma_1,\sigma_2,\sigma_3$ be the $2\times 2$ Pauli spin matrices. For two {\it odd} spectral triples $(\mathcal{A}_j,\mathcal{H}_j,D_j)$,
$j=1,2$, one can also consider the following spectral triple
\begin{center}
$(\mathcal{A}_1\otimes\mathcal{A}_2\,,\,\mathcal{H}_1\otimes\mathcal{H}_2\otimes\mathbb{C}^2\,,D=D_1\otimes 1\otimes\sigma_1+1\otimes D_2\otimes\sigma_2)$ 
\end{center}
as their multiplication. It is an {\it even} spectral triple with the grading operator $1\otimes 1\otimes\sigma_3$. However, observe that this is
nothing but first making $(\mathcal{A}_1,\mathcal{H}_1,D_1)$ {\it even} as described above in point $(2)$, and then following the multiplication
rule given by (\ref{mult of spec}).
\end{enumerate}

We fix the following notations throughout the article.
\medskip

\textbf{Notation~:} $(a)\,\mathcal{A}=\mathcal{A}_1\otimes\mathcal{A}_2\,,\,(b)\,\mathcal{H}=\mathcal{H}_1\otimes\mathcal{H}_2\,,\,(c)\,D=D_1
\otimes 1+\gamma_1\otimes D_2\,,\,(d)\,\,\mathcal{E}=\mathcal{E}_1\otimes\mathcal{E}_2\,,\,(e)\,\pi=\pi_1\otimes\pi_2\,$.

\begin{lemma}\label{first form}
$\Omega_D^1(\mathcal{A})=\Omega_{D_1}^1(\mathcal{A}_1)\otimes\mathcal{A}_2\bigoplus\mathcal{A}_1\otimes\Omega_{D_2}^1(\mathcal{A}_2)$ as
$\mathcal{A}$-bimodules.
\end{lemma}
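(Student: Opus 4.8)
The plan is to reduce everything to a direct commutator computation on simple tensors, then peel off a stray grading operator at the end. Writing a general element of $\Omega_D^1(\mathcal{A})$ as $\sum_j a_j[D,b_j]$ with $a_j,b_j\in\mathcal{A}_1\otimes\mathcal{A}_2$, it suffices by linearity to treat $a_j=a_1\otimes a_2$ and $b_j=b_1\otimes b_2$. Using that $\gamma_1$ commutes with $\mathcal{A}_1$ and anticommutes with $D_1$, I would first establish the identity
\[
[D,\,a_1\otimes a_2]=[D_1,a_1]\otimes a_2+a_1\gamma_1\otimes[D_2,a_2].
\]
Left-multiplying by $a_1\otimes a_2$ and summing then shows that each generator decomposes as a term of $\Omega_{D_1}^1(\mathcal{A}_1)\otimes\mathcal{A}_2$ plus a term of $\gamma_1\mathcal{A}_1\otimes\Omega_{D_2}^1(\mathcal{A}_2)$, which gives the inclusion $\subseteq$.

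For the reverse inclusion I would feed the special generators $b_1\otimes 1$ and $1\otimes b_2$ into the same identity. Since $[D_2,1]=0$ and $[D_1,1]=0$, the elements $(a_1\otimes a_2)[D,\,b_1\otimes 1]$ recover all of $\Omega_{D_1}^1(\mathcal{A}_1)\otimes\mathcal{A}_2$, while $(a_1\otimes a_2)[D,\,1\otimes b_2]$ recover all of $\gamma_1\mathcal{A}_1\otimes\Omega_{D_2}^1(\mathcal{A}_2)$. Hence $\Omega_D^1(\mathcal{A})=\big(\Omega_{D_1}^1(\mathcal{A}_1)\otimes\mathcal{A}_2\big)+\big(\gamma_1\mathcal{A}_1\otimes\Omega_{D_2}^1(\mathcal{A}_2)\big)$, and each summand is manifestly an $\mathcal{A}$-sub-bimodule (for the second one using $\mathcal{A}_1\gamma_1=\gamma_1\mathcal{A}_1$).

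The directness is where the grading does the real work. A one-line computation from $\gamma_1 D_1=-D_1\gamma_1$ shows that every element of $\Omega_{D_1}^1(\mathcal{A}_1)$ anticommutes with $\gamma_1$, whereas every element of $\gamma_1\mathcal{A}_1$ commutes with $\gamma_1$. Consequently the two summands are exactly the $(-1)$- and $(+1)$-eigenspaces of the involution $T\mapsto(\gamma_1\otimes 1)\,T\,(\gamma_1\otimes 1)$ on $\mathcal{B}(\mathcal{H})$, so their intersection is $0$ and the sum is direct.

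It remains to absorb the stray $\gamma_1$. Because $\gamma_1\otimes 1$ is a self-adjoint unitary commuting with all of $\mathcal{A}_1\otimes\mathcal{A}_2$, left multiplication by it is an $\mathcal{A}$-bimodule automorphism of $\mathcal{B}(\mathcal{H})$ that carries $\gamma_1\mathcal{A}_1\otimes\Omega_{D_2}^1(\mathcal{A}_2)$ isomorphically onto $\mathcal{A}_1\otimes\Omega_{D_2}^1(\mathcal{A}_2)$, and this produces the claimed bimodule isomorphism. I expect the only genuinely delicate point to be the bookkeeping of this factor of $\gamma_1$: it is precisely what stops the decomposition from being a literal internal equality and forces the final isomorphism step, so I would track it carefully through both the spanning argument and the eigenspace argument.
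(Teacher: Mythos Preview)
Your proof is correct and follows the same route as the paper: compute $[D,a_1\otimes a_2]$, use the special generators $b_1\otimes 1$ and $1\otimes b_2$ for the reverse inclusion, and invoke the $\gamma_1$-grading for directness. You are in fact more careful than the paper on one point: the paper silently identifies $\gamma_1\mathcal{A}_1\otimes\Omega_{D_2}^1(\mathcal{A}_2)$ with $\mathcal{A}_1\otimes\Omega_{D_2}^1(\mathcal{A}_2)$ and presents the result as an equality, whereas you correctly track the stray $\gamma_1$ and explain why left multiplication by $\gamma_1\otimes 1$ is an $\mathcal{A}$-bimodule automorphism, so that the statement is really an isomorphism rather than a literal equality of subspaces of $\mathcal{B}(\mathcal{H})$.
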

\begin{proof}
Note that $[D,\sum a_1\otimes a_2]=\sum [D_1,a_1]\otimes a_2+\gamma_1a_1\otimes[D_2,a_2]$. Since, $\gamma_1[D_1,a_1]=-[D_1,a_1]\gamma_1$
and $\gamma_1^2=1$, we have the following $\mathcal{A}$-bimodule inclusion,
\begin{center}
$\Omega_D^1(\mathcal{A})\subseteq\Omega_{D_1}^1(\mathcal{A}_1)\otimes\mathcal{A}_2\bigoplus\mathcal{A}_1\otimes\Omega_{D_2}^1(\mathcal{A}_2)\,.$
\end{center}
In order to show the equality, observe that any element $\sum a_0[D_1,a_1]\otimes a_2$ of $\Omega_{D_1}^1(\mathcal{A}_1)\otimes\mathcal{A}_2$
can be written as $\sum (a_0\otimes a_2)[D,a_1\otimes 1]$. Similarly, $\sum a_0\otimes a_1[D_2,a_2]\in\mathcal{A}_1\otimes\Omega_{D_2}^1
(\mathcal{A}_2)$ can be written as $\sum (a_0\otimes a_1)[D,1\otimes a_2]$.
\end{proof}

\begin{lemma}\label{first differential}
The Dirac dga differential $\,d:\mathcal{A}\longrightarrow\Omega_D^1(\mathcal{A})$ is given by
\begin{center}
$d(a_1\otimes a_2)=(d_1(a_1)\otimes a_2\,,\,a_1\otimes d_2(a_2))$
\end{center}
where $\,d_j:\mathcal{A}_j\longrightarrow\Omega_{D_j}^1(\mathcal{A}_j)$, for $j=1,2$, are the Dirac dga differentials associated with $\mathcal{A}_j$.
\end{lemma}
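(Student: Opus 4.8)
The plan is to verify the formula for the differential $d : \mathcal{A} \longrightarrow \Omega_D^1(\mathcal{A})$ by direct computation using the definition of the product Dirac operator $D = D_1 \otimes 1 + \gamma_1 \otimes D_2$, together with the bimodule decomposition of $\Omega_D^1(\mathcal{A})$ established in Lemma~\ref{first form}. Recall that the Dirac dga differential sends $a \longmapsto [D,a]$, so concretely I need to compute the commutator $[D, a_1 \otimes a_2]$ and then identify the result with the pair $(d_1(a_1) \otimes a_2, \, a_1 \otimes d_2(a_2))$ under the direct-sum identification $\Omega_D^1(\mathcal{A}) \cong \Omega_{D_1}^1(\mathcal{A}_1) \otimes \mathcal{A}_2 \bigoplus \mathcal{A}_1 \otimes \Omega_{D_2}^1(\mathcal{A}_2)$.

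First I would expand the commutator exactly as in the proof of Lemma~\ref{first form}, namely
\begin{align*}
[D, a_1 \otimes a_2] &= (D_1 \otimes 1 + \gamma_1 \otimes D_2)(a_1 \otimes a_2) - (a_1 \otimes a_2)(D_1 \otimes 1 + \gamma_1 \otimes D_2)\\
&= [D_1, a_1] \otimes a_2 + \gamma_1 a_1 \otimes [D_2, a_2]\,.
\end{align*}
Here I use that $\gamma_1$ commutes with $a_1 \in \mathcal{A}_1$ (since $(\mathcal{A}_1,\mathcal{H}_1,D_1,\gamma_1)$ is an even spectral triple) so that $\gamma_1 a_1 \otimes D_2 a_2 - a_1 \gamma_1 \otimes D_2 a_2 = 0$ contributes nothing beyond the stated terms. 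The first summand $[D_1,a_1]\otimes a_2 = d_1(a_1)\otimes a_2$ lies in $\Omega_{D_1}^1(\mathcal{A}_1)\otimes\mathcal{A}_2$, and the second summand $\gamma_1 a_1 \otimes [D_2,a_2]$ lies in $\mathcal{A}_1 \otimes \Omega_{D_2}^1(\mathcal{A}_2)$ after absorbing $\gamma_1$.

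The one genuine subtlety — and the point I expect to be the main thing to justify carefully — is the presence of the grading operator $\gamma_1$ in the second summand, which must be reconciled with the clean statement that this component equals $a_1 \otimes d_2(a_2)$ with no $\gamma_1$ appearing. The resolution is exactly the rewriting used at the end of the proof of Lemma~\ref{first form}: under the isomorphism of that lemma, the element $\gamma_1 a_1 \otimes [D_2,a_2]$ is identified with $(a_1 \otimes a_2')[D, 1 \otimes b_2]$-type expressions, and writing $a_1 \otimes [D_2,a_2] = (a_1 \otimes 1)[D, 1 \otimes a_2]$ shows that the canonical representative of this summand in the second factor $\mathcal{A}_1 \otimes \Omega_{D_2}^1(\mathcal{A}_2)$ is precisely $a_1 \otimes d_2(a_2)$; the factor $\gamma_1$ is an artifact of representing the module inside $\mathcal{B}(\mathcal{H})$ and is accounted for by the bimodule identification of Lemma~\ref{first form} rather than appearing in the abstract direct sum. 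I would therefore conclude by remarking that, after applying the identification of Lemma~\ref{first form}, the two summands are exactly the claimed components, giving $d(a_1 \otimes a_2) = (d_1(a_1)\otimes a_2, \, a_1 \otimes d_2(a_2))$, and extending by linearity to all of $\mathcal{A} = \mathcal{A}_1 \otimes \mathcal{A}_2$ completes the proof.
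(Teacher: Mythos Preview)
Your proof is correct and is essentially the same as the paper's, which simply says ``Follows from the previous Lemma~(\ref{first form})''; you have merely unpacked that one-line reference by repeating the commutator computation $[D,a_1\otimes a_2]=[D_1,a_1]\otimes a_2+\gamma_1 a_1\otimes[D_2,a_2]$ already carried out there and making explicit the identification of the two summands with the direct-sum components. Your discussion of why the $\gamma_1$ disappears under the bimodule isomorphism is a welcome clarification, but the underlying argument is identical.
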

\begin{proof}
Follows from the previous Lemma (\ref{first form}). 
\end{proof}

\begin{lemma}\label{numerator of 2nd form}
$\pi(\Omega^2(\mathcal{A}))=\left(\pi_1(\Omega^2(\mathcal{A}_1))\otimes\mathcal{A}_2+\mathcal{A}_1\otimes\pi_2(\Omega^2(\mathcal{A}_2))\right)
\bigoplus\Omega_{D_1}^1(\mathcal{A}_1)\otimes\Omega_{D_2}^1(\mathcal{A}_2)$ as $\mathcal{A}$-bimodules.
\end{lemma}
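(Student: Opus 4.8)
The plan is to identify $\pi(\Omega^2(\mathcal{A}))$ with the bimodule product $\Omega_D^1(\mathcal{A})\cdot\Omega_D^1(\mathcal{A})$ and then expand this product using the decomposition of Lemma \ref{first form}. First I would record that $\pi(\Omega^2(\mathcal{A}))$ is, by definition, the linear span of the products $a[D,b][D,c]$, hence is contained in $\Omega_D^1(\mathcal{A})\cdot\Omega_D^1(\mathcal{A})$; the reverse inclusion follows from the identity $[D,b]a'=[D,ba']-b[D,a']$, which rewrites any product $(a[D,b])(a'[D,c])$ as an element of $\pi(\Omega^2(\mathcal{A}))$. Thus $\pi(\Omega^2(\mathcal{A}))=\Omega_D^1(\mathcal{A})\cdot\Omega_D^1(\mathcal{A})$.

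Next I would use Lemma \ref{first form} in its concrete form inside $\mathcal{B}(\mathcal{H}_1)\otimes\mathcal{B}(\mathcal{H}_2)$, namely $\Omega_D^1(\mathcal{A})=A\oplus B$ with $A=\Omega_{D_1}^1(\mathcal{A}_1)\otimes\mathcal{A}_2$ and $B=\mathcal{A}_1\gamma_1\otimes\Omega_{D_2}^1(\mathcal{A}_2)$, where the $\gamma_1$-twist in $B$ comes from $[D,1\otimes a_2]=\gamma_1\otimes[D_2,a_2]$. Expanding $\Omega_D^1(\mathcal{A})\cdot\Omega_D^1(\mathcal{A})=(A+B)(A+B)$ into four products and repeatedly using $\gamma_1 a=a\gamma_1$, $\gamma_1\omega=-\omega\gamma_1$ for $\omega\in\Omega_{D_1}^1(\mathcal{A}_1)$, and $\gamma_1^2=1$, I expect $A\cdot A=\pi_1(\Omega^2(\mathcal{A}_1))\otimes\mathcal{A}_2$, $B\cdot B=\mathcal{A}_1\otimes\pi_2(\Omega^2(\mathcal{A}_2))$, and $A\cdot B+B\cdot A=\Omega_{D_1}^1(\mathcal{A}_1)\gamma_1\otimes\Omega_{D_2}^1(\mathcal{A}_2)\cong\Omega_{D_1}^1(\mathcal{A}_1)\otimes\Omega_{D_2}^1(\mathcal{A}_2)$ as bimodules. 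Each identification is an equality rather than merely an inclusion, since the scalars $a_2a_2'$ and $a_1a_1'$ range over all of $\mathcal{A}_2$ and $\mathcal{A}_1$ once one specializes the remaining free variables to $1$. This already yields the sum $\pi(\Omega^2(\mathcal{A}))=\bigl(\pi_1(\Omega^2(\mathcal{A}_1))\otimes\mathcal{A}_2+\mathcal{A}_1\otimes\pi_2(\Omega^2(\mathcal{A}_2))\bigr)+\Omega_{D_1}^1(\mathcal{A}_1)\otimes\Omega_{D_2}^1(\mathcal{A}_2)$, and each listed piece is visibly a sub-$\mathcal{A}$-bimodule.

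The crux is to upgrade this sum to a direct sum, and for this I would use the $\mathbb{Z}_2$-grading of $\mathcal{B}(\mathcal{H}_1)\otimes\mathcal{B}(\mathcal{H}_2)$ given by the conjugation $\mathrm{Ad}(\gamma_1\otimes 1)$. A product of two elements of $\Omega_{D_1}^1(\mathcal{A}_1)$ commutes with $\gamma_1$, and $\mathcal{A}_1$ commutes with $\gamma_1$, so the first summand $\pi_1(\Omega^2(\mathcal{A}_1))\otimes\mathcal{A}_2+\mathcal{A}_1\otimes\pi_2(\Omega^2(\mathcal{A}_2))$ lies in the $+1$-eigenspace of $\mathrm{Ad}(\gamma_1\otimes 1)$; conversely $\Omega_{D_1}^1(\mathcal{A}_1)\gamma_1$ anticommutes with $\gamma_1$, so the second summand $\Omega_{D_1}^1(\mathcal{A}_1)\otimes\Omega_{D_2}^1(\mathcal{A}_2)$ lies in the $-1$-eigenspace. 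Eigenspaces for distinct eigenvalues meet only in $0$, so the two summands intersect trivially and the sum is direct, giving the claimed bimodule decomposition.

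The main obstacle I anticipate is the bookkeeping of the $\gamma_1$ factors throughout the four-term expansion: in particular the sign appearing in $B\cdot A$, where $\gamma_1$ must be moved past an element of $\Omega_{D_1}^1(\mathcal{A}_1)$, and the cancellation $\gamma_1^2=1$ in $B\cdot B$, which removes the grading operator and lands the product genuinely in $\mathcal{A}_1\otimes\pi_2(\Omega^2(\mathcal{A}_2))$. Getting the parity of each piece right under $\mathrm{Ad}(\gamma_1\otimes 1)$ is exactly what powers the directness argument, so that is where I would concentrate the care.
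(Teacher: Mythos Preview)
Your argument is correct and follows essentially the same route as the paper: both proofs expand using the commutation relations $\gamma_1 a=a\gamma_1$, $\gamma_1[D_1,a]=-[D_1,a]\gamma_1$, $\gamma_1^2=1$ to identify the three pieces, and both use the parity under $\gamma_1$ to separate the mixed term from the other two. Your packaging via $\pi(\Omega^2(\mathcal{A}))=\Omega_D^1(\mathcal{A})\cdot\Omega_D^1(\mathcal{A})$ together with Lemma~\ref{first form} is a tidy reorganization of the paper's elementwise expansion, and your explicit invocation of the $\mathrm{Ad}(\gamma_1\otimes 1)$-grading makes the directness step clearer than in the paper, which leaves that point implicit.
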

\begin{proof}
Arbitrary element of $\,\pi(\Omega^2(\mathcal{A}))$ looks like
\begin{eqnarray*}
&  & \sum_{i,j,k}(a_{0i}\otimes a_{1i})[D,b_{0j}\otimes b_{1j}][D,c_{0k}\otimes c_{1k}]\\
& = & \sum_{i,j,k}a_{0i}[D_1,b_{0j}][D_1,c_{0k}]\otimes a_{1i}b_{1j}c_{1k}+a_{0i}b_{0j}c_{0k}\otimes a_{1i}[D_2,b_{1j}][D_2,c_{1k}]\\
&  & \quad\quad+\gamma_1\left(a_{0i}b_{0j}[D_1,c_{0k}]\otimes a_{1i}[D_2,b_{1j}]c_{1k}-a_{0i}[D_1,b_{0j}]c_{0k}\otimes a_{1i}b_{1j}[D_2,c_{1k}]\right)\,.
\end{eqnarray*}
Since, $\gamma_1$ anticommutes with $[D_1,a]$ but commutes with $\mathcal{A}_1$ and $[D_1,a][D_1,b]$ we have the inclusion `$\subseteq$'. Now,
\begin{itemize}
\item[(i)] $\underline{\Omega_{D_1}^1(\mathcal{A}_1)\otimes\Omega_{D_1}^1(\mathcal{A}_1)\subseteq\pi(\Omega^2(\mathcal{A}))\,:}\,\,$ Consider
$\sum_{i,j}a_{0i}[D_1,b_{0j}]\otimes a_{1i}[D_2,b_{1j}]\in\Omega_{D_1}^1(\mathcal{A}_1)\otimes\Omega_{D_1}^1(\mathcal{A}_1)$. Observe that
\begin{eqnarray*}
&  & \sum_{i,j}(a_{0i}\otimes a_{1i})[D,1\otimes b_{1j}][D,b_{0j}\otimes 1]=\sum_{i,j}\gamma_1a_{0i}[D_1,b_{0j}]\otimes a_{1i}[D_2,b_{1j}]\,.
\end{eqnarray*}
\item[(ii)] $\underline{\pi_1(\Omega^2(\mathcal{A}_1))\otimes\mathcal{A}_2\subseteq\pi(\Omega^2(\mathcal{A}))\,:}\,\,$ Consider $\sum_{i,j,k}
a_{0i}[D_1,b_{0j}][D_1,c_{0k}]\otimes a_{1i}\in\pi(\Omega^2(\mathcal{A}_1))\otimes\mathcal{A}_2$. Observe that
\begin{eqnarray*}
&  & \sum_{i,j,k}(a_{0i}\otimes a_{1i})[D,b_{0j}\otimes 1][D,c_{0k}\otimes 1]=\sum_{i,j,k}a_{0i}[D_1,b_{0j}][D_1,c_{0k}]\otimes a_{1i}\,.
\end{eqnarray*}
\item[(iii)] $\underline{\mathcal{A}_1\otimes\pi_2(\Omega^2(\mathcal{A}_2))\subseteq\pi(\Omega^2(\mathcal{A}))\,:}\,\,$ Take $\sum_{i,j,k}a_{0i}
\otimes a_{1i}[D_2,b_{1j}][D_2,c_{1k}]\in\mathcal{A}_1\otimes\pi(\Omega^2(\mathcal{A}_2))$. Observe that
\begin{eqnarray*}
&  & \sum_{i,j,k}(a_{0i}\otimes a_{1i})[D,1\otimes b_{1j}][D,1\otimes c_{1k}]=\sum_{i,j,k}a_{0i}\otimes a_{1i}[D_2,b_{1j}][D_2,c_{1k}]\,.
\end{eqnarray*}
\end{itemize}
This gives the reverse inclusion `$\supseteq$' and completes the proof.
\end{proof}

\begin{lemma}\label{denominator of 2nd form}
As an $\mathcal{A}$-bimodule, we have
\begin{center}
$\pi(dJ_0^1(\mathcal{A}))=\pi_1(d_1J_0^1(\mathcal{A}_1))\otimes\mathcal{A}_2+\mathcal{A}_1\otimes\pi_2(d_2J_0^1(\mathcal{A}_2))\,,$
\end{center}
i,e. $\pi(dJ_0^1(\mathcal{A}))\bigcap\left(\Omega_{D_1}^1(\mathcal{A}_1)\otimes\Omega_{D_2}^1(\mathcal{A}_2)\right)=\{0\}$ in $\pi(\Omega^2
(\mathcal{A}))$.
\end{lemma}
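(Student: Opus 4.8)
The plan is to prove the displayed equality of $\mathcal{A}$-bimodules by establishing the two inclusions separately, and then to read off the reformulation from the direct sum of Lemma \ref{numerator of 2nd form}. Throughout I would work with simple tensors $b_j=b_0^j\otimes b_1^j$, $c_j=c_0^j\otimes c_1^j$ and extend by linearity. Expanding $[D,b_j][D,c_j]$ exactly as in the proof of Lemma \ref{numerator of 2nd form}, every element of $\pi(dJ_0^1(\mathcal{A}))$ splits as a term lying in $\pi_1(\Omega^2(\mathcal{A}_1))\otimes\mathcal{A}_2$, a term lying in $\mathcal{A}_1\otimes\pi_2(\Omega^2(\mathcal{A}_2))$, and a mixed term $\gamma_1\Xi$ with $\Xi\in\Omega_{D_1}^1(\mathcal{A}_1)\otimes\Omega_{D_2}^1(\mathcal{A}_2)$. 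The substance of the lemma is that for a \emph{junk} form the mixed term disappears and the remaining two terms are themselves junk.

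For the inclusion $\supseteq$ I would realize the generators directly. Given $\sum_j b_0^j[D_1,c_0^j]=0$ in $\Omega_{D_1}^1(\mathcal{A}_1)$ and $a\in\mathcal{A}_2$, take $B_j=b_0^j\otimes a$ and $C_j=c_0^j\otimes 1$; then $\sum_j B_j[D,C_j]=\left(\sum_j b_0^j[D_1,c_0^j]\right)\otimes a=0$, so $\sum_j[D,B_j][D,C_j]\in\pi(dJ_0^1(\mathcal{A}))$, and a short computation shows this operator equals $\sum_j[D_1,b_0^j][D_1,c_0^j]\otimes a$, the potential extra $\gamma_1$-term being annihilated by the junk relation. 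This gives $\pi_1(d_1J_0^1(\mathcal{A}_1))\otimes\mathcal{A}_2\subseteq\pi(dJ_0^1(\mathcal{A}))$; the symmetric choice $B_j=a\otimes b_1^j$, $C_j=1\otimes c_1^j$ handles $\mathcal{A}_1\otimes\pi_2(d_2J_0^1(\mathcal{A}_2))$, again using the junk relation to cancel the unwanted cross term.

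The heart of the proof, and the step I expect to be the main obstacle, is the inclusion $\subseteq$, i.e. the vanishing of $\Xi$. Starting from $\sum_j B_j[D,C_j]=0$ and invoking the \emph{directness} of the sum in Lemma \ref{first form}, the two homogeneous components must vanish separately (the second after removing the invertible factor $\gamma_1\otimes 1$), yielding
\[
\text{(A)}\ \sum_j b_0^j[D_1,c_0^j]\otimes b_1^j c_1^j=0,
\qquad
\text{(B)}\ \sum_j b_0^j c_0^j\otimes b_1^j[D_2,c_1^j]=0 .
\]
Since the algebraic tensor product embeds faithfully in $\mathcal{B}(\mathcal{H})$, these are genuine identities in $\Omega_{D_1}^1(\mathcal{A}_1)\otimes_{\mathbb{C}}\mathcal{A}_2$ and $\mathcal{A}_1\otimes_{\mathbb{C}}\Omega_{D_2}^1(\mathcal{A}_2)$, so I may apply the $\mathbb{C}$-linear maps $\mathrm{id}\otimes d_2$ to (A) and $d_1\otimes\mathrm{id}$ to (B). Using the Leibniz rules $[D_2,b_1^j c_1^j]=[D_2,b_1^j]c_1^j+b_1^j[D_2,c_1^j]$ and $[D_1,b_0^j c_0^j]=[D_1,b_0^j]c_0^j+b_0^j[D_1,c_0^j]$, the two resulting identities share the common term $\sum_j b_0^j[D_1,c_0^j]\otimes b_1^j[D_2,c_1^j]$; subtracting them cancels this term and leaves precisely $\Xi=0$. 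This is the decisive computation, and the only nontrivial point is checking that the partial differentials are well defined on the image, which is exactly what the faithfulness of the algebraic tensor product provides.

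Finally, with $\Xi=0$ the junk form reduces to $\sum_j[D_1,b_0^j][D_1,c_0^j]\otimes b_1^j c_1^j+\sum_j b_0^j c_0^j\otimes[D_2,b_1^j][D_2,c_1^j]$. To see each summand is genuinely junk I would expand $b_1^j c_1^j$ (resp. $b_0^j c_0^j$) in a $\mathbb{C}$-basis $\{f_\lambda\}$ of $\mathcal{A}_2$ (resp. $\mathcal{A}_1$); condition (A) (resp. (B)) then forces each coordinate $\sum_j\mu_{j\lambda}\,b_0^j[D_1,c_0^j]$ to vanish, so each coefficient $\sum_j\mu_{j\lambda}\,[D_1,b_0^j][D_1,c_0^j]$ lies in $\pi_1(d_1J_0^1(\mathcal{A}_1))$. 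This yields $\subseteq$, hence the asserted equality. The reformulation $\pi(dJ_0^1(\mathcal{A}))\cap\big(\Omega_{D_1}^1(\mathcal{A}_1)\otimes\Omega_{D_2}^1(\mathcal{A}_2)\big)=\{0\}$ is then immediate, since the established right-hand side sits inside the complementary summand $\pi_1(\Omega^2(\mathcal{A}_1))\otimes\mathcal{A}_2+\mathcal{A}_1\otimes\pi_2(\Omega^2(\mathcal{A}_2))$ of the decomposition in Lemma \ref{numerator of 2nd form}.
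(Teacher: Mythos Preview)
Your proposal is correct and follows essentially the same route as the paper: both arguments derive the two componentwise relations from the directness in Lemma~\ref{first form}, apply the partial differentials $\mathrm{id}\otimes d_2$ and $d_1\otimes\mathrm{id}$, and subtract to kill the mixed $\gamma_1$-term, then use a linear-independence argument in the tensor product to see that each remaining summand is genuinely junk. The only cosmetic differences are that the paper lifts briefly to the universal $J_0^1$ before applying $\pi$ (where you invoke faithfulness of the algebraic tensor product directly at the operator level), and cites a result from Dixmier for the last step (where you use a $\mathbb{C}$-basis expansion); these are equivalent justifications of the same idea.
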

\begin{proof}
Arbitrary element of $\,\pi(dJ_0^1(\mathcal{A}))$ looks like
\begin{center}
$\sum_{j,k}[D,b_{0j}\otimes b_{1j}][D,c_{0k}\otimes c_{1k}]$
\end{center}
such that
\begin{eqnarray}\label{0th eqn}
\sum_{j,k}(b_{0j}\otimes b_{1j})[D,c_{0k}\otimes c_{1k}] & = & 0\,.
\end{eqnarray}
This equation (\ref{0th eqn}) gives us the following two equations (by Lemma [\ref{first form}])
\begin{eqnarray}\label{1st eqn}
\sum_{j,k}b_{0j}[D_1,c_{0k}]\otimes b_{1j}c_{1k} & = & 0\,,
\end{eqnarray}
\begin{eqnarray}\label{2nd eqn}
\sum_{j,k}b_{0j}c_{0k}\otimes b_{1j}[D_2,c_{1k}] & = & 0\,.
\end{eqnarray}
For arbitrary $\sum[D_1,b_{0j}][D_1,c_{0k}]\otimes a\in\pi_1(d_1J_0^1(\mathcal{A}_1))\otimes\mathcal{A}_2$ and $\sum b\otimes[D_2,b_{1j}]
[D_2,c_{1k}]\in\mathcal{A}_1\otimes\pi_2(d_2J_0^1(\mathcal{A}_2))$ if we denote $\xi$ to be their summation, then we see that $\xi=(1\otimes a)
\xi_1+(b\otimes 1)\xi_2$
where,
\begin{center}
$\xi_1=\sum[D,b_{0j}\otimes 1][D,c_{0k}\otimes 1]\quad$ and $\quad\xi_2=[D,1\otimes b_{1j}][D,1\otimes c_{1k}]\,.$
\end{center}
Both $\xi_1$ and $\xi_2$ are in $\pi(dJ_0^1(\mathcal{A}))$ by equations (\ref{1st eqn}) and (\ref{2nd eqn}). Being bimodule over $\mathcal{A}=
\mathcal{A}_1\otimes\mathcal{A}_2$ we conclude that $\xi\in\pi(dJ_0^1(\mathcal{A}))$. This proves the following,
\begin{center}
$\pi_1(d_1J_0^1(\mathcal{A}_1))\otimes\mathcal{A}_2+\mathcal{A}_1\otimes\pi_2(d_2J_0^1(\mathcal{A}_2))\subseteq\pi(dJ_0^1(\mathcal{A}))\,.$
\end{center}
To prove the reverse inclusion, first recall from  Lemma (\ref{first form}) that $J_0^1(\mathcal{A})=J_0^1(\mathcal{A}_1)\otimes\mathcal{A}_2
\bigoplus\mathcal{A}_1\otimes J_0^1(\mathcal{A}_2)$. Consider the element $\,\omega=\sum_{j,k}b_{0j}d_1(c_{0k})\otimes b_{1j}c_{1k}$ in
$J_0^1(\mathcal{A}_1)\otimes\mathcal{A}_2\subseteq J_0^1(\mathcal{A})$. So, $(1\otimes d_2)\omega\in J_0^1(\mathcal{A}_1)\otimes\Omega^1
(\mathcal{A}_2)$. Thus,
\begin{eqnarray}\label{3rd eqn}
&  & (\pi_1\otimes\pi_2)\circ(1\otimes d_2)\left(\sum_{j,k}b_{0j}d_1(c_{0k})\otimes b_{1j}c_{1k}\right)=0\nonumber\\
& \Rightarrow & \sum_{j,k}b_{0j}[D_1,c_{0k}]\otimes([D_2,b_{1j}]c_{1k}+b_{1j}[D_2,c_{1k}])=0\,.
\end{eqnarray}
Similarly, for $\,\omega=\sum_{j,k}b_{0j}c_{0k}\otimes b_{1j}d_2(c_{1k})$ in $\mathcal{A}_1\otimes J_0^1(\mathcal{A}_2)\subseteq J_0^1
(\mathcal{A})$ we get
\begin{eqnarray}\label{4th eqn}
&  & (\pi_1\otimes\pi_2)\circ(d_1\otimes 1)\left(\sum_{j,k}b_{0j}c_{0k}\otimes b_{1j}d_2(c_{1k})\right)=0\nonumber\\
& \Rightarrow & \sum_{j,k}([D_1,b_{0j}]c_{0k}+b_{0j}[D_1,c_{0k}])\otimes b_{1j}[D_2,c_{1k}]=0\,.
\end{eqnarray}
Finally, equation (\ref{3rd eqn})$-$(\ref{4th eqn}) gives us
\begin{eqnarray*}
\sum_{j,k}b_{0j}[D_1,c_{0k}]\otimes[D_2,b_{1j}]c_{1k}-[D_1,b_{0j}]c_{0k}\otimes b_{1j}[D_2,c_{1k}]=0\,.
\end{eqnarray*}
This implies that any arbitrary element $\,\xi=\sum_{j,k}[D,b_{0j}\otimes b_{1j}][D,c_{0k}\otimes c_{1k}]$ of $\pi(dJ_0^1(\mathcal{A}))$ is
actually of the form
\begin{center}
$\xi\,=\sum_{j,k}[D_1,b_{0j}][D_1,c_{0k}]\otimes b_{1j}c_{1k}+b_{0j}c_{0k}\otimes[D_2,b_{1j}][D_2,c_{1k}]\,.$
\end{center}
That is,
\begin{center}
$\pi(dJ_0^1(\mathcal{A}))\subseteq\pi_1(\Omega^2(\mathcal{A}_1))\otimes\mathcal{A}_2+\mathcal{A}_1\otimes\pi_2(\Omega^2(\mathcal{A}_2))\,,$
\end{center}
in view of Lemma (\ref{numerator of 2nd form}), i,e.  $\pi(dJ_0^1(\mathcal{A}))\bigcap\left(\Omega_{D_1}^1(\mathcal{A}_1)\otimes\Omega_{D_2}^1
(\mathcal{A}_2)\right)=\{0\}$ in $\pi(\Omega^2(\mathcal{A}))$. Now, recall a general result (Exercise 6, Part I, Chapter 2, Page 69 in \cite{Dx})
that given two Hilbert spaces $\mathcal{H}_1,\mathcal{H}_2$, and operators $T_i\in\mathcal{B}(\mathcal{H}_1),\,T_i^\prime\in\mathcal{B}
(\mathcal{H}_2),\,i=1,\ldots,k$, such that the $T_i$ are linearly independent and such that $\sum_{i=1}^k\,T_i\otimes T_i^\prime=0$, it follows that
$\,T_i^\prime=0$ for all $i=1,\ldots,k$. Using the faithfulness of $\,\pi_1,\pi_2$ in our case, we see that the equation (\ref{1st eqn}) implies
$\sum_{j,k}b_{0j}[D_1,c_{0k}]=0$ and equation (\ref{2nd eqn}) implies $\sum_{j,k}b_{1j}[D_2,c_{1k}]=0$. Hence,
\begin{center}
$\pi(dJ_0^1(\mathcal{A}))\subseteq\pi_1(d_1J_0^1(\mathcal{A}_1))\otimes\mathcal{A}_2+\mathcal{A}_1\otimes\pi_2(d_2J_0^1(\mathcal{A}_2))$
\end{center}
which concludes the proof.
\end{proof}

\begin{remark}\label{a counter example}\rm
In general, it is may not be true that
\begin{align*}
\pi_1(\Omega^2(\mathcal{A}_1))\otimes\mathcal{A}_2 &\bigcap \mathcal{A}_1\otimes\pi_2(\Omega^2(\mathcal{A}_2))=\{0\},\\
\pi_1(d_1J_0^1(\mathcal{A}_1))\otimes\mathcal{A}_2 &\bigcap \mathcal{A}_1\otimes\pi_2(d_2J_0^1(\mathcal{A}_2))=\{0\}.
\end{align*}
If we take $\mathcal{A}_1$ to be a noncommutative $n$-torus $\mathcal{A}_\Theta$ and $\mathcal{A}_2$ to be a noncommutative $m$-torus
$\mathcal{A}_\Phi$, then $\,\pi_1(\Omega^2(\mathcal{A}_\Theta))\otimes\mathcal{A}_\Phi\cong(\mathcal{A}_\Theta\otimes\mathcal{A}_\Phi)^{1+n(n-1)/2}$
and $\,\mathcal{A}_\Theta\otimes\pi_2(\Omega^2(\mathcal{A}_\Phi))\cong(\mathcal{A}_\Theta\otimes\mathcal{A}_\Phi)^{1+m(m-1)/2}$. But
\begin{eqnarray*}
\pi_1(\Omega^2(\mathcal{A}_\Theta))\otimes\mathcal{A}_\Phi+\mathcal{A}_\Theta\otimes\pi_2(\Omega^2(\mathcal{A}_\Phi))\cong
(\mathcal{A}_\Theta\otimes\mathcal{A}_\Phi)^{1+n(n-1)/2+m(m-1)/2}
\end{eqnarray*}
i,e. $\,\pi_1(\Omega^2(\mathcal{A}_\Theta))\otimes\mathcal{A}_\Phi\bigcap\mathcal{A}_\Theta\otimes\pi_2(\Omega^2(\mathcal{A}_\Phi))$ is nonzero,
and in fact can be identified with the free bimodule $\mathcal{A}_\Theta\otimes\mathcal{A}_\Phi$ of rank 1 over $\mathcal{A}_\Theta\otimes
\mathcal{A}_\Phi$. Moreover, $\pi(dJ_0^1(\mathcal{A}_\Theta\otimes\mathcal{A}_\Phi))\cong\mathcal{A}_\Theta\otimes\mathcal{A}_\Phi$ i,e. a free
bimodule over $\mathcal{A}_\Theta\otimes\mathcal{A}_\Phi$ of rank 1. But both $\pi_1(d_1J_0^1(\mathcal{A}_\Theta))\otimes\mathcal{A}_\Phi$ and
$\mathcal{A}_\Theta\otimes\pi_2(d_2J_0^1(\mathcal{A}_\Phi))$ are also free bimodules over $\mathcal{A}_\Theta\otimes\mathcal{A}_\Phi$ of rank 1.
Hence, by Lemma (\ref{denominator of 2nd form}) we see that $\pi_1(d_1J_0^1(\mathcal{A}_1))\otimes\mathcal{A}_2\bigcap\mathcal{A}_1\otimes\pi_2
(d_2J_0^1(\mathcal{A}_2))$ is nonzero, and in fact can be identified with a free bimodule over $\mathcal{A}_\Theta\otimes\mathcal{A}_\Phi$ of rank
1. We will see these in detail in Section $(4)$. This example explains that we can not replace the summation $`+$' in Lemma (\ref{numerator of 2nd form})
and (\ref{denominator of 2nd form}) by the direct sum $`\oplus$' always.
\end{remark}

Let $\mathcal{E}_1$ and $\mathcal{E}_2$ be two Hermitian f.g.p modules over $\mathcal{A}_1$ and $\mathcal{A}_2$ respectively. Then by Thm.
(\ref{structure thm of herm. mod.}) we know that $\mathcal{E}_1=p_1\mathcal{A}_1^m$ and $\mathcal{E}_2=p_2\mathcal{A}_2^n$ for suitable projections
$\,p_1\in M_m(\mathcal{A}_1),\,p_2\in M_n(\mathcal{A}_2)$ such that $\mathcal{E}_1,\,\mathcal{E}_2$ now have the canonical Hermitian structure on
them. Consider $\mathcal{E}:=\mathcal{E}_1\otimes\mathcal{E}_2$. Clearly, $\mathcal{E}$ is a f.g.p module over $\mathcal{A}$ of the form $\,p_1
\mathcal{A}_1^m\otimes p_2\mathcal{A}_2^n\cong(p_1\otimes p_2)(\mathcal{A}_1\otimes\mathcal{A}_2)^{mn}$.

\begin{lemma}\label{isomorphism involving product module}
For f.g.p right modules $\mathcal{E}_j$ over $\mathcal{A}_j,\,j=1,2$, one has
\begin{enumerate}
\item $(\mathcal{E}_1\otimes\mathcal{E}_2)\otimes_\mathcal{A}\left(\Omega_{D_1}^1(\mathcal{A}_1)\otimes\mathcal{A}_2\right)\cong
\left(\mathcal{E}_1\otimes_{\mathcal{A}_1}\Omega_{D_1}^1(\mathcal{A}_1)\right)\otimes\mathcal{E}_2$ as right $\mathcal{A}$-modules.
\item $(\mathcal{E}_1\otimes\mathcal{E}_2)\otimes_\mathcal{A}\left(\mathcal{A}_1\otimes\Omega_{D_2}^1(\mathcal{A}_2)\right)\cong
\mathcal{E}_1\otimes\left(\mathcal{E}_2\otimes_{\mathcal{A}_2}\Omega_{D_2}^1(\mathcal{A}_2)\right)$ as right $\mathcal{A}$-modules.
\end{enumerate}
\end{lemma}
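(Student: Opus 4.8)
The plan is to exhibit explicit, mutually inverse right $\mathcal{A}$-module homomorphisms for part (1); part (2) then follows by interchanging the roles of the indices $1$ and $2$. I would define
\[
\Psi\colon (\mathcal{E}_1\otimes\mathcal{E}_2)\otimes_\mathcal{A}\bigl(\Omega_{D_1}^1(\mathcal{A}_1)\otimes\mathcal{A}_2\bigr)\longrightarrow\bigl(\mathcal{E}_1\otimes_{\mathcal{A}_1}\Omega_{D_1}^1(\mathcal{A}_1)\bigr)\otimes\mathcal{E}_2
\]
on elementary tensors by $(\xi_1\otimes\xi_2)\otimes(\omega\otimes a_2)\longmapsto(\xi_1\otimes_{\mathcal{A}_1}\omega)\otimes(\xi_2 a_2)$, with candidate inverse $\Phi\colon(\xi_1\otimes_{\mathcal{A}_1}\omega)\otimes\zeta\longmapsto(\xi_1\otimes\zeta)\otimes(\omega\otimes 1)$. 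Here I use that $\Omega_{D_1}^1(\mathcal{A}_1)\otimes\mathcal{A}_2$ carries the obvious $\mathcal{A}$-bimodule structure; the grading $\gamma_1$ appearing in its realisation inside $\mathcal{B}(\mathcal{H})$ (see Lemma \ref{first form}) commutes with $\mathcal{A}_1$ and hence does not affect the abstract bimodule structure, so it plays no role in the argument.

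First I would verify that $\Psi$ is well defined. The only nontrivial point is that it respects the $\mathcal{A}$-balancing relation $(\xi\cdot a)\otimes_\mathcal{A}\eta=\xi\otimes_\mathcal{A}(a\cdot\eta)$: writing $a=a_1\otimes a_2$ and chasing both sides through $\Psi$, the resulting equality reduces to the $\mathcal{A}_1$-balancing identity $\xi_1 a_1\otimes_{\mathcal{A}_1}\omega=\xi_1\otimes_{\mathcal{A}_1}a_1\omega$ inside $\mathcal{E}_1\otimes_{\mathcal{A}_1}\Omega_{D_1}^1(\mathcal{A}_1)$, together with associativity of the $\mathcal{A}_2$-action $\xi_2\cdot(a_2 b_2)$. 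Dually, to see that $\Phi$ is well defined I must check that it is insensitive to the $\mathcal{A}_1$-balancing in its source; this is where the unit of $\mathcal{A}_2$ is essential, since
\[
(\xi_1 a_1\otimes\zeta)\otimes_\mathcal{A}(\omega\otimes 1)=\bigl((\xi_1\otimes\zeta)(a_1\otimes 1)\bigr)\otimes_\mathcal{A}(\omega\otimes 1)=(\xi_1\otimes\zeta)\otimes_\mathcal{A}(a_1\omega\otimes 1),
\]
using the $\mathcal{A}$-balancing in the domain. Checking that both maps are right $\mathcal{A}$-linear is then a direct comparison of the two actions on elementary tensors.

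It remains to verify $\Phi\circ\Psi=\mathrm{id}$ and $\Psi\circ\Phi=\mathrm{id}$. The composite $\Psi\circ\Phi$ is immediate. For $\Phi\circ\Psi$ the crux is the identity $(\xi_1\otimes\xi_2 a_2)\otimes_\mathcal{A}(\omega\otimes 1)=(\xi_1\otimes\xi_2)\otimes_\mathcal{A}(\omega\otimes a_2)$, which again follows from the $\mathcal{A}$-balancing in the domain together with $(1\otimes a_2)(\omega\otimes 1)=\omega\otimes a_2$. I expect the main (and essentially the only) obstacle to be this bookkeeping of the balancing relations: ensuring that the $\mathcal{A}_2$-component can be transported across the tensor product via the unit, and that the $\mathcal{A}_1$-component is correctly absorbed by the relative tensor product over $\mathcal{A}_1$. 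Once these are in place the isomorphism is forced.

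Finally, part (2) is proved verbatim after exchanging the indices $1\leftrightarrow 2$; alternatively, both parts follow from the general fact that for a right $\mathcal{A}_1$-module $M_1$, a right $\mathcal{A}_2$-module $M_2$, and bimodules $N_1$ over $\mathcal{A}_1$, $N_2$ over $\mathcal{A}_2$, one has
\[
(M_1\otimes M_2)\otimes_{\mathcal{A}_1\otimes\mathcal{A}_2}(N_1\otimes\mathcal{A}_2)\cong(M_1\otimes_{\mathcal{A}_1}N_1)\otimes M_2,\qquad (M_1\otimes M_2)\otimes_{\mathcal{A}_1\otimes\mathcal{A}_2}(\mathcal{A}_1\otimes N_2)\cong M_1\otimes(M_2\otimes_{\mathcal{A}_2}N_2),
\]
applied with $N_1=\Omega_{D_1}^1(\mathcal{A}_1)$ and $N_2=\Omega_{D_2}^1(\mathcal{A}_2)$ respectively. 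I would note in passing that f.g.p-ness of $\mathcal{E}_1,\mathcal{E}_2$ is not needed for this isomorphism, although it is what makes the modules relevant to the Yang--Mills setting.
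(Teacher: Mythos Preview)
Your proposal is correct and follows essentially the same idea as the paper's proof: the paper simply asserts that ``these are canonical isomorphisms since both $\mathcal{A}_1,\mathcal{A}_2$ are unital algebras, and $\mathcal{E}_j\otimes_{\mathcal{A}_j}\mathcal{A}_j\cong\mathcal{E}_j$ for $j=1,2$,'' whereas you spell out the explicit mutually inverse maps and verify the balancing relations that make them well defined. Your remark that f.g.p-ness is not actually needed for the isomorphism is also accurate.
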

\begin{proof}
These are canonical isomorphisms since both $\mathcal{A}_1,\mathcal{A}_2$ are unital algebras, and $\,\mathcal{E}_j\otimes_{\mathcal{A}_j}
\mathcal{A}_j\cong\mathcal{E}_j$ for $j=1,2$.
\end{proof}

\begin{lemma}
$\mathcal{E}:=\mathcal{E}_1\otimes\mathcal{E}_2$ is a Hermitian f.g.p module over $\mathcal{A}$ and the Hermitian structure is given by
\begin{center}
$\langle e_1\otimes e_2\,,\,e_1^\prime\otimes e_2^\prime\rangle_\mathcal{A}=\langle e_1\otimes e_1^\prime\rangle_{\mathcal{A}_1}\otimes\langle
e_2\otimes e_2^\prime\rangle_{\mathcal{A}_2}\,.$
\end{center}
\end{lemma}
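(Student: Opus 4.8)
The plan is to avoid checking the Hermitian axioms of Definition (\ref{def}) by hand, and instead reduce to the canonical structure on a free module, where all the axioms come for free; the only real content is then a one-line bookkeeping computation on elementary tensors.

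First I would record the f.g.p. claim. Since $\mathcal{E}_1\cong p_1\mathcal{A}_1^m$ and $\mathcal{E}_2\cong p_2\mathcal{A}_2^n$ with $p_1,p_2$ projections, the element $p_1\otimes p_2\in M_{mn}(\mathcal{A})$ satisfies $(p_1\otimes p_2)^2=p_1^2\otimes p_2^2=p_1\otimes p_2$ and $(p_1\otimes p_2)^*=p_1^*\otimes p_2^*=p_1\otimes p_2$, hence is itself a projection, and $\mathcal{E}=\mathcal{E}_1\otimes\mathcal{E}_2\cong(p_1\otimes p_2)\mathcal{A}^{mn}$ as right $\mathcal{A}$-modules. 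Because $p_1\otimes p_2$ is a projection, the canonical Hermitian structure $\langle\zeta,\omega\rangle_{\mathcal{A}}=\sum_k\zeta_k^*\omega_k$ on $\mathcal{A}^{mn}$ restricts to a genuine Hermitian structure on $\mathcal{E}$, as recalled in Section $[2]$ (equivalently by Theorem (\ref{structure thm of herm. mod.})). Thus $\mathcal{E}$ is a Hermitian f.g.p.\ $\mathcal{A}$-module, and it only remains to identify this induced structure with the asserted tensor-product formula.

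Next I would match the two structures on elementary tensors. Under the standard identification $\mathcal{A}_1^m\otimes\mathcal{A}_2^n\cong\mathcal{A}^{mn}$, an elementary tensor $\xi\otimes\eta$ with $\xi=\sum_i\xi_ie_i$ and $\eta=\sum_j\eta_jf_j$ corresponds to the vector whose entries are $\xi_i\otimes\eta_j$. Using $(a\otimes b)^*=a^*\otimes b^*$ in $\mathcal{A}$, the definition of the canonical structure gives
\begin{align*}
\langle\xi\otimes\eta,\xi'\otimes\eta'\rangle_{\mathcal{A}}
&=\sum_{i,j}(\xi_i\otimes\eta_j)^*(\xi_i'\otimes\eta_j')=\sum_{i,j}\xi_i^*\xi_i'\otimes\eta_j^*\eta_j'\\
&=\Bigl(\sum_i\xi_i^*\xi_i'\Bigr)\otimes\Bigl(\sum_j\eta_j^*\eta_j'\Bigr)=\langle\xi,\xi'\rangle_{\mathcal{A}_1}\otimes\langle\eta,\eta'\rangle_{\mathcal{A}_2}\,,
\end{align*}
which is precisely the stated formula on elementary tensors. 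Since both sides are sesquilinear (conjugate-linear in the first slot, linear in the second) and the elementary tensors span $\mathcal{E}_1\otimes\mathcal{E}_2$ over $\mathbb{C}$, the identity extends to all of $\mathcal{E}$; this also simultaneously shows that the right-hand side is well defined on the algebraic $\mathbb{C}$-tensor product, since it agrees everywhere with the manifestly well-defined canonical form.

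Finally, the three defining properties (conjugate symmetry (a), $\mathcal{A}$-linearity (b), and positive-definiteness together with self-duality (c)) need no separate verification: they hold automatically for the canonical structure on $(p_1\otimes p_2)\mathcal{A}^{mn}$. The only delicate point is the bookkeeping in the basis identification $\mathcal{A}_1^m\otimes\mathcal{A}_2^n\cong\mathcal{A}^{mn}$ together with the compatibility of the involutions; once this is set up correctly the computation above is routine, and I do not anticipate any genuine obstacle. (One could alternatively verify (a)--(c) directly from the formula, but that route forces one to re-establish well-definedness of a conjugate-linear form on the complex tensor product and to prove self-duality by hand, so the reduction to the canonical case is cleaner.)
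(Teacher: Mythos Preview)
Your proposal is correct and follows essentially the same approach as the paper: identify $\mathcal{E}$ with $(p_1\otimes p_2)\mathcal{A}^{mn}$ for the projection $p_1\otimes p_2$, restrict the canonical Hermitian structure from $\mathcal{A}^{mn}$, and then verify the tensor-product formula. The paper's proof is terser (it simply says ``one can easily verify the above equality''), whereas you spell out the coordinate computation on elementary tensors, but the strategy is identical.
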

\begin{proof}
Since, $\mathcal{E}$ is of the form $\,p_1\mathcal{A}_1^m\otimes p_2\mathcal{A}_2^n=(p_1\otimes p_2)(\mathcal{A}_1\otimes\mathcal{A}_2)^{mn}$,
where $p_1\otimes p_2\in M_{mn}(\mathcal{A})$ is a projection, restricting the canonical Hermitian structure on $\mathcal{A}^{mn}$ to $\mathcal{E}$
makes $\mathcal{E}$ a Hermitian f.g.p module over $\mathcal{A}$. One can easily verify the above equality.
\end{proof}

For two compatible connections $\nabla_1\in C(\mathcal{E}_1)$ and $\nabla_2\in C(\mathcal{E}_2)$ define
\begin{align*}
\nabla:\mathcal{E}_1\otimes\mathcal{E}_2 &\longrightarrow (\mathcal{E}_1\otimes\mathcal{E}_2)\otimes_\mathcal{A}\Omega_D^1(\mathcal{A})\\
e_1\otimes e_2 &\longmapsto \nabla_1(e_1)\otimes e_2+e_1\otimes\nabla_2(e_2)
\end{align*}

\begin{proposition}\label{the connection}
$\nabla\in C(\mathcal{E})$, i,e. if $\,\nabla_1$ and $\nabla_2$ are compatible connections on $\mathcal{E}_1$ and $\mathcal{E}_2$ respectively,
then so is $\nabla$ on $\mathcal{E}$.
\end{proposition}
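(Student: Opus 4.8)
The plan is to verify directly the two defining axioms of a compatible connection, namely the Leibniz rule (a) and the Hermitian compatibility (b), after first recording that $\nabla$ is a well-defined $\mathbb{C}$-linear map into the correct target. For the latter, note that by Lemma~\ref{first form} the codomain splits as $\mathcal{E}\otimes_\mathcal{A}\Omega_D^1(\mathcal{A})\cong\mathcal{E}\otimes_\mathcal{A}(\Omega_{D_1}^1(\mathcal{A}_1)\otimes\mathcal{A}_2)\,\bigoplus\,\mathcal{E}\otimes_\mathcal{A}(\mathcal{A}_1\otimes\Omega_{D_2}^1(\mathcal{A}_2))$, and Lemma~\ref{isomorphism involving product module} identifies these two summands with $(\mathcal{E}_1\otimes_{\mathcal{A}_1}\Omega_{D_1}^1(\mathcal{A}_1))\otimes\mathcal{E}_2$ and $\mathcal{E}_1\otimes(\mathcal{E}_2\otimes_{\mathcal{A}_2}\Omega_{D_2}^1(\mathcal{A}_2))$ respectively. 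The two pieces $\nabla_1(e_1)\otimes e_2$ and $e_1\otimes\nabla_2(e_2)$ land in these two summands, so the defining formula makes sense; $\mathbb{C}$-linearity is then immediate.

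For the Leibniz rule, I would fix $a=a_1\otimes a_2$ and $\xi=e_1\otimes e_2$ and expand $\nabla((e_1a_1)\otimes(e_2a_2))$ using the individual Leibniz rules $\nabla_1(e_1a_1)=(\nabla_1 e_1)a_1+e_1\otimes d_1a_1$ and $\nabla_2(e_2a_2)=(\nabla_2 e_2)a_2+e_2\otimes d_2a_2$, then sort the resulting terms into the two summands above. The terms carrying $(\nabla_1 e_1)a_1$ and $(\nabla_2 e_2)a_2$ assemble into $(\nabla\xi)a$, once one records that the right $\mathcal{A}$-action of $a_1\otimes a_2$ acts through $a_1$ on the $\Omega_{D_1}^1$-slot (resp. $a_2$ on the $\Omega_{D_2}^1$-slot) and through $a_2$ on $\mathcal{E}_2$ (resp. $a_1$ on $\mathcal{E}_1$), which is exactly the right-module structure carried by the identifications of Lemma~\ref{isomorphism involving product module}. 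The remaining terms $(e_1\otimes d_1a_1)\otimes e_2a_2$ and $e_1a_1\otimes(e_2\otimes d_2a_2)$ are precisely $\xi\otimes da$ after invoking Lemma~\ref{first differential}, $d(a_1\otimes a_2)=(d_1a_1\otimes a_2\,,\,a_1\otimes d_2a_2)$, and moving $1\otimes a_2$ (resp. $a_1\otimes 1$) across the balanced tensor product $\otimes_\mathcal{A}$.

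For the Hermitian compatibility I would compute $\langle\xi,\nabla\eta\rangle$ and $\langle\nabla\xi,\eta\rangle$ separately on simple tensors $\xi=e_1\otimes e_2$, $\eta=f_1\otimes f_2$, using the product Hermitian structure and the convention $\langle\nabla\xi,\eta\rangle=\sum\omega_j^*\langle\xi_j,\eta\rangle_\mathcal{A}$. Writing $\nabla_1 f_1=\sum_j f_{1j}\otimes\omega_j^{(1)}$ and $\nabla_2 f_2=\sum_k f_{2k}\otimes\omega_k^{(2)}$ and likewise for $\nabla_1 e_1,\nabla_2 e_2$, each pairing factorizes as a tensor product of an $\mathcal{A}_1$-side with an $\mathcal{A}_2$-side, where one uses that the involution on $\Omega_D^1(\mathcal{A})$ respects the splitting, i.e. $(\omega\otimes 1)^*=\omega^*\otimes 1$ and $(1\otimes\omega)^*=1\otimes\omega^*$. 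Subtracting yields $(\langle e_1,\nabla_1 f_1\rangle_{\mathcal{A}_1}-\langle\nabla_1 e_1,f_1\rangle_{\mathcal{A}_1})\otimes\langle e_2,f_2\rangle_{\mathcal{A}_2}$ plus the symmetric term; applying the compatibility of $\nabla_1$ and $\nabla_2$ turns this into $d_1\langle e_1,f_1\rangle_{\mathcal{A}_1}\otimes\langle e_2,f_2\rangle_{\mathcal{A}_2}+\langle e_1,f_1\rangle_{\mathcal{A}_1}\otimes d_2\langle e_2,f_2\rangle_{\mathcal{A}_2}$, which is exactly $d\langle\xi,\eta\rangle_\mathcal{A}$ by Lemma~\ref{first differential}.

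I expect the main obstacle to be bookkeeping rather than anything conceptual: keeping the two direct summands of $\Omega_D^1(\mathcal{A})$ straight while commuting scalars across $\otimes_\mathcal{A}$ in the Leibniz step, and while tracking the adjoints in the sesquilinear pairing for compatibility. The one genuinely non-formal input is that the canonical identifications of Lemma~\ref{isomorphism involving product module} are isomorphisms of \emph{right} $\mathcal{A}$-modules, so that the right action used in the Leibniz computation is the one compatible with these identifications; once this is pinned down, the entire verification reduces to the already-established Leibniz and compatibility properties of $\nabla_1$ and $\nabla_2$ together with Lemma~\ref{first differential}.
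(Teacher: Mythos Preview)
Your proposal is correct and follows essentially the same route as the paper: both verify the Leibniz rule by expanding $\nabla((e_1a_1)\otimes(e_2a_2))$ via the individual Leibniz rules and invoking Lemma~\ref{first differential}, and both verify Hermitian compatibility by writing out $\nabla_j$ in components, computing the two pairings, subtracting, and applying the compatibility of $\nabla_1,\nabla_2$ together with Lemma~\ref{first differential}. Your treatment is slightly more explicit than the paper's about why the target of $\nabla$ is correct (via Lemmas~\ref{first form} and~\ref{isomorphism involving product module}), but the substance is the same.
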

\begin{proof}
Clearly, $\nabla$ is a $\mathbb{C}$-linear map. Now, for $\,e_1\otimes e_2\in\mathcal{E}$ and $x\otimes y\in\mathcal{A}$,
\begin{eqnarray*}
&  & \nabla((e_1\otimes e_2)(x\otimes y))\\
& = & \nabla(e_1x\otimes e_2y)\\
& = & \nabla_1(e_1x)\otimes e_2y+e_1x\otimes\nabla_2(e_2y)\\
& = & \nabla_1(e_1)x\otimes e_2y+(e_1\otimes d_1x)\otimes e_2y+e_1x\otimes\nabla_2(e_2)y+e_1x\otimes(e_2\otimes d_2y)\\
& = & (\nabla_1(e_1)x\otimes e_2y+e_1x\otimes\nabla_2(e_2)y)+(e_1\otimes e_2y\otimes d_1x\otimes 1+e_1x\otimes e_2\otimes d_2y)\\
& = & (\nabla_1(e_1)\otimes e_2+e_1\otimes\nabla_2(e_2))(x\otimes y)+(e_1\otimes e_2)\otimes(d_1x\otimes y+x\otimes d_2y)\\
& = & \nabla(e_1\otimes e_2)(x\otimes y)+(e_1\otimes e_2)\otimes d(x\otimes y)
\end{eqnarray*}
by Lemma (\ref{first differential}). Hence, $\nabla$ is a connection on $\mathcal{E}$. Now, we show that $\nabla$ is compatible with
respect to the Hermitian structure on $\mathcal{E}$. Let
\begin{center}
 $\nabla_1(e_1)=\sum_i e_{1i}\otimes\omega_{1i}\,\,\in\mathcal{E}_1\otimes\Omega_{D_1}^1(\mathcal{A}_1)\,,$
\end{center}
\begin{center}
 $\nabla_2(e_2)=\sum_i e_{2i}\otimes\omega_{2i}\,\,\in\mathcal{E}_2\otimes\Omega_{D_2}^1(\mathcal{A}_2)\,,$
\end{center}
\begin{center}
 $\nabla_1(e_1^\prime)=\sum_i e_{1i}^\prime\otimes\omega_{1i}^\prime\in\mathcal{E}_1\otimes\Omega_{D_1}^1(\mathcal{A}_1)\,,$
\end{center}
\begin{center}
 $\nabla_2(e_2^\prime)=\sum_i e_{2i}^\prime\otimes\omega_{2i}^\prime\in\mathcal{E}_2\otimes\Omega_{D_2}^1(\mathcal{A}_2)\,$.
\end{center}
Then,
\begin{align*}
\nabla_1(e_1)\otimes e_2+e_1\otimes\nabla_2(e_2) &= \sum_i (e_{1i}\otimes e_2\otimes\omega_{1i}\,,\,e_1\otimes e_{2i}\otimes\omega_{2i})\,,\\
\nabla_1(e_1^\prime)\otimes e_2^\prime+e_1^\prime\otimes\nabla_2(e_2^\prime) &= \sum_i (e_{1i}^\prime\otimes e_2^\prime\otimes\omega_{1i}^\prime
 \,,\,e_1^\prime\otimes e_{2i}^\prime\otimes\omega_{2i}^\prime)\,.
\end{align*}
and
\begin{eqnarray}\label{eqn involving Dirac dgas}
 d\langle e_1\otimes e_2\,,\,e_1^\prime\otimes e_2^\prime\rangle & = & d(\langle e_1,e_1^\prime\rangle\otimes\langle e_2,e_2^\prime\rangle)\\
 & = & d_1(\langle e_1,e_1^\prime\rangle)\otimes\langle e_2,e_2^\prime\rangle+\langle e_1,e_1^\prime\rangle\otimes d_2(\langle e_2,e_2^\prime\rangle)\,.\nonumber
\end{eqnarray}
Since, $\nabla_1\in C(\mathcal{E}_1)$ and $\nabla_2\in C(\mathcal{E}_2)$ we have
\begin{eqnarray*}
&  & \langle e_1,\nabla_1e_1^\prime\rangle-\langle\nabla_1e_1,e_1^\prime\rangle=d_1(\langle e_1,e_1^\prime\rangle)\\
&  & \langle e_2,\nabla_2e_2^\prime\rangle-\langle\nabla_2e_2,e_2^\prime\rangle=d_2(\langle e_2,e_2^\prime\rangle)
\end{eqnarray*}
which further implies the following equations
\begin{eqnarray}\label{individual compatibility 1}
\sum_i\langle e_1,e_{1i}^\prime\rangle\omega_{1i}^\prime-\omega_{1i}^*\langle e_{1i},e_1^\prime\rangle & = & d_1(\langle e_1,e_1^\prime\rangle)\,,
\end{eqnarray}
\begin{eqnarray}\label{individual compatibility 2}
\sum_i\langle e_2,e_{2i}^\prime\rangle\omega_{2i}^\prime-\omega_{2i}^*\langle e_{2i},e_2^\prime\rangle & = & d_2(\langle e_2,e_2^\prime\rangle)\,.
\end{eqnarray}
Now,
\begin{eqnarray*}
&  & \langle e_1\otimes e_2\,,\,\nabla(e_1^\prime\otimes e_2^\prime)\rangle\\
& = & \langle e_1\otimes e_2\,,\,\sum_ie_{1i}^\prime\otimes e_2^\prime\otimes\omega_{1i}^\prime+e_1^\prime\otimes e_{2i}^\prime\otimes\omega_{2i}^\prime\rangle\\
& = & \sum_{i,j}\langle e_1\otimes e_2\,,\,e_{1i}^\prime\otimes e_2^\prime\otimes a_{01ij}^\prime[D_1,a_{11ij}^\prime]+e_1^\prime\otimes e_{2i}^\prime
\otimes a_{02ij}^\prime[D_2,a_{12ij}^\prime]\rangle\\
& = & \sum_{i,j}\langle e_1\otimes e_2\,,\,e_{1i}^\prime\otimes e_2^\prime\otimes((a_{01ij}^\prime\otimes 1)[D,a_{11ij}^\prime\otimes 1])+e_1^\prime
\otimes e_{2i}^\prime\otimes((1\otimes a_{02ij}^\prime)[D,1\otimes a_{12ij}^\prime])\rangle\\
& = & \sum_{i,j}(\langle e_1,e_{1i}^\prime\rangle\otimes\langle e_2,e_2^\prime\rangle)((a_{01ij}^\prime\otimes 1)[D,a_{11ij}^\prime\otimes 1])\\
&  & +\sum_{i,j}(\langle e_1,e_1^\prime\rangle\otimes\langle e_2,e_{2i}^\prime\rangle)((1\otimes a_{02ij}^\prime)[D,1\otimes a_{12ij}^\prime])\\
& = & \sum_{i,j}(\langle e_1,e_{1i}^\prime\rangle a_{01ij}^\prime\otimes\langle e_2,e_2^\prime\rangle)[D,a_{11ij}^\prime\otimes 1]
+(\langle e_1,e_1^\prime\rangle\otimes\langle e_2,e_{2i}^\prime\rangle a_{02ij}^\prime)[D,1\otimes a_{12ij}^\prime]\\
& = & \sum_i\langle e_1,e_{1i}^\prime\rangle\omega_{1i}^\prime\otimes\langle e_2,e_2^\prime\rangle+\langle e_1,e_1^\prime\rangle\otimes
\langle e_2,e_{2i}^\prime\rangle\omega_{2i}^\prime
\end{eqnarray*}
Similarly,
\begin{eqnarray*}
&  & \langle\nabla(e_1\otimes e_2)\,,\,e_1^\prime\otimes e_2^\prime\rangle=\sum_i\omega_{1i}^*\langle e_{1i},e_1^\prime\rangle\otimes\langle e_2,
e_2^\prime\rangle+\langle e_1,e_1^\prime\rangle\otimes\omega_{2i}^*\langle e_{2i},e_2^\prime\rangle\,.
\end{eqnarray*}
Subtracting, we get from equations (\ref{individual compatibility 1}), (\ref{individual compatibility 2}) and (\ref{eqn involving Dirac dgas}) that
\begin{eqnarray*}
&  & \langle e_1\otimes e_2\,,\,\nabla(e_1^\prime\otimes e_2^\prime)\rangle-\langle\nabla(e_1\otimes e_2)\,,\,e_1^\prime\otimes e_2^\prime\rangle=
d(\langle e_1\otimes e_2\,,\,e_1^\prime\otimes e_2^\prime\rangle)
\end{eqnarray*}
This proves that $\nabla$ is a compatible connection i,e. $\nabla\in C(\mathcal{E})$.
\end{proof}

\begin{remark}\rm
Individually, $\nabla_1\otimes 1$ and $1\otimes\nabla_2$ are not connections on $\mathcal{E}=\mathcal{E}_1\otimes\mathcal{E}_2$.
\end{remark}

Now, we are in a position to define subadditivity and additivity of the Yang-Mills functional.

\begin{definition}\label{defn of additivity and subadditivity}
For any two even spectral triples $(\mathcal{A}_j,\mathcal{H}_j,D_j,\gamma_j)$ and Hermitian f.g.p modules $\,\mathcal{E}_j$ over $\mathcal{A}_j,\,j=1,2$,
we say that the Yang-Mills action functional $\mathcal{YM}$ is
\begin{enumerate}
\item Subadditive if $\,\sqrt{\mathcal{YM}}(\nabla)\leq\sqrt{\alpha}\sqrt{\mathcal{YM}}(\nabla_1)+\sqrt{\beta}\sqrt{\mathcal{YM}}(\nabla_2)\,$,
for all $\nabla_j\in C(\mathcal{E}_j)\,,$
\item Additive if $\,\mathcal{YM}(\nabla)=\alpha \mathcal{YM}(\nabla_1)+\beta \mathcal{YM}(\nabla_2)\,$, for all $\nabla_j\in C(\mathcal{E}_j)\,;$
\end{enumerate}
for certain positive constants $\alpha$ and $\beta$, essentially determined by the summability of the individual spectral triples
$($These constants will be explicitly determined in Thm. $[\ref{subadditivity}])$.
\end{definition}

\begin{remark}\rm
Above definition (\ref{defn of additivity and subadditivity}) is natural in the following sense. The Yang-Mills action functional is defined using
certain inner-product (Def. [\ref{YM functional}]). Hence, the square root is given by a certain norm. However, one should note that the domain of
the Yang-Mills functional is an affine space instead of a vector space. Therefore, a suitable formulation of subadditivity and additivity was needed.
\end{remark}

Now, we put an assumption on the individual spectral triples $(\mathcal{A}_j,\mathcal{H}_j,D_j),\,j=1,2$, to show that the Yang-Mills action
functional is always subadditive.
\medskip

\textbf{Assumption:} 
For $\mathcal{A}=\mathcal{A}_1\otimes\mathcal{A}_2\,$,
\begin{center}
$\frac{\pi_1(\Omega^2(\mathcal{A}_1))\otimes\mathcal{A}_2+\mathcal{A}_1\otimes\pi_2(\Omega^2(\mathcal{A}_2))}{\pi_1(d_1J_0^1(\mathcal{A}_1))
\otimes\mathcal{A}_2+\mathcal{A}_1\otimes\pi_2(d_2J_0^1(\mathcal{A}_2))}\cong\Omega_{D_1}^2(\mathcal{A}_1)\otimes\mathcal{A}_2\bigoplus\mathcal{A}_1
\otimes\Omega_{D_2}^2(\mathcal{A}_2)$ 
\end{center} as $\mathcal{A}$-bimodules.
\medskip

\begin{lemma}\label{2 form for product system}
The above assumption is equivalent to the fact that
\begin{center}
$\Omega_D^2(\mathcal{A})\cong\Omega_{D_1}^2(\mathcal{A}_1)\otimes\mathcal{A}_2\bigoplus\mathcal{A}_1\otimes\Omega_{D_2}^2(\mathcal{A}_2)\bigoplus
\Omega_{D_1}^1(\mathcal{A}_1)\otimes\Omega_{D_2}^1(\mathcal{A}_2)$
\end{center} as $\mathcal{A}$-bimodules.
\end{lemma}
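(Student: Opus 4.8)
The plan is to reduce the asserted equivalence to a single \emph{unconditional} bimodule isomorphism, obtained by feeding the computations of Lemma (\ref{numerator of 2nd form}) and Lemma (\ref{denominator of 2nd form}) into the quotient that defines $\Omega_D^2(\mathcal{A})$. To organize this, write $P:=\pi_1(\Omega^2(\mathcal{A}_1))\otimes\mathcal{A}_2+\mathcal{A}_1\otimes\pi_2(\Omega^2(\mathcal{A}_2))$, $Q:=\Omega_{D_1}^1(\mathcal{A}_1)\otimes\Omega_{D_2}^1(\mathcal{A}_2)$, and $R:=\pi_1(d_1J_0^1(\mathcal{A}_1))\otimes\mathcal{A}_2+\mathcal{A}_1\otimes\pi_2(d_2J_0^1(\mathcal{A}_2))$, all regarded as sub-bimodules of $\pi(\Omega^2(\mathcal{A}))$. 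By Lemma (\ref{numerator of 2nd form}) one has $\pi(\Omega^2(\mathcal{A}))=P\oplus Q$, and by Lemma (\ref{denominator of 2nd form}) one has $\pi(dJ_0^1(\mathcal{A}))=R$ together with the crucial disjointness $R\cap Q=\{0\}$.

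First I would record that $R\subseteq P$: indeed $\pi_j(d_jJ_0^1(\mathcal{A}_j))\subseteq\pi_j(\Omega^2(\mathcal{A}_j))$ for $j=1,2$, since an element $\sum[D_j,b][D_j,c]$ is the special case of $\sum a[D_j,b][D_j,c]$ with leading factor the unit, so each summand of $R$ sits inside the corresponding summand of $P$. Then I would compute $\Omega_D^2(\mathcal{A})=\pi(\Omega^2(\mathcal{A}))/\pi(dJ_0^1(\mathcal{A}))=(P\oplus Q)/R$. Because $R$ is a sub-bimodule contained in $P$, its elements inside the internal direct sum $P\oplus Q$ are exactly those of the form $(r,0)$ with $r\in R$, so the $\mathcal{A}$-bimodule map $(P\oplus Q)/R\longrightarrow(P/R)\oplus Q$ sending $(p,q)+R\mapsto(p+R,\,q)$ is well defined, visibly $\mathcal{A}$-bilinear, surjective, and has kernel exactly $R$; hence it is a bimodule isomorphism. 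This is just the third isomorphism theorem in the presence of a direct-sum complement, and it produces the unconditional decomposition
$$\Omega_D^2(\mathcal{A})\cong(P/R)\oplus\bigl(\Omega_{D_1}^1(\mathcal{A}_1)\otimes\Omega_{D_2}^1(\mathcal{A}_2)\bigr),$$
valid whether or not the Assumption holds.

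The equivalence then follows at once. The Assumption is precisely the statement $P/R\cong\Omega_{D_1}^2(\mathcal{A}_1)\otimes\mathcal{A}_2\bigoplus\mathcal{A}_1\otimes\Omega_{D_2}^2(\mathcal{A}_2)$, and substituting it into the displayed isomorphism yields exactly the three-fold decomposition of $\Omega_D^2(\mathcal{A})$ asserted in the Lemma. For the reverse implication the point is that the summand $Q=\Omega_{D_1}^1(\mathcal{A}_1)\otimes\Omega_{D_2}^1(\mathcal{A}_2)$ is the canonical image of $Q$ under the quotient and is split off internally, so that $P/R\cong\Omega_D^2(\mathcal{A})/Q$ canonically; thus identifying $\Omega_D^2(\mathcal{A})$ as the three-fold sum (with its common $Q$ factor) is the same information as identifying its complementary summand $P/R$ as the two-fold sum, which is the Assumption.

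I expect the only delicate point to be making the backward direction clean: the two displayed statements differ exactly by the common summand $Q$, and the legitimacy of ``removing'' it rests on the fact that the decomposition $\Omega_D^2(\mathcal{A})=(P/R)\oplus Q$ is internal and canonical (so one is comparing complementary summands, not invoking an abstract module-cancellation principle). Everything else is routine: one must only verify throughout that $P$, $Q$, $R$ are genuine $\mathcal{A}$-bimodules (which they are, being sums and tensor products of the bimodules produced by Lemmas (\ref{numerator of 2nd form}) and (\ref{denominator of 2nd form})) and that the quotient map respects the bimodule structure.
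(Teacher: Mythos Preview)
Your proof is correct and follows exactly the approach the paper intends: the paper's own proof is the single sentence ``Follows from Lemma (\ref{numerator of 2nd form}) and (\ref{denominator of 2nd form})'', and what you have written is precisely the unpacking of that sentence---writing $\Omega_D^2(\mathcal{A})=(P\oplus Q)/R\cong(P/R)\oplus Q$ via $R\subseteq P$, and then reading the equivalence off. Your explicit flagging of the backward direction (that one is cancelling a \emph{canonical internal} summand $Q$, not invoking an abstract cancellation law) is a genuine clarification of something the paper leaves implicit.
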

\begin{proof}
Follows from Lemma (\ref{numerator of 2nd form}) and (\ref{denominator of 2nd form}).
\end{proof}

In general, it is not known whether Lemma $(\ref{2 form for product system})$ is always true for any pair of spectral triples $(\mathcal{A}_j,
\mathcal{H}_j,D_j),\,j=1,2$. One has to check this for each particular cases. After the end of this section we provide few examples to validate
this assumption.

\begin{lemma}\label{2nd differential}
The Dirac dga differential $\,d:\Omega_D^1(\mathcal{A})\longrightarrow\Omega_D^2(\mathcal{A})$ is given by the following
\begin{center}
$d(\omega_1\otimes a_2\,,\,a_1\otimes\omega_2)=\left(d_1(\omega_1)\otimes a_2\,,\,a_1\otimes d_2(\omega_2)\,,\,\omega_1\otimes d_2(a_2)-d_1(a_1)
\otimes\omega_2\right)$
\end{center}
where, for $j=1,2,\,\,d_j:\Omega_{D_j}^1(\mathcal{A}_j)\longrightarrow\Omega_{D_j}^2(\mathcal{A}_j)$ are the Dirac dga differentials
associated with $\mathcal{A}_j$.
\end{lemma}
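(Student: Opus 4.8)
The plan is to reduce to the two summands of $\Omega_D^1(\mathcal{A})$ furnished by Lemma (\ref{first form}) and to compute the Dirac differential $d:\Omega_D^1(\mathcal{A})\to\Omega_D^2(\mathcal{A})$, $a[D,b]\mapsto[\,[D,a][D,b]\,]$, on each type of generator, reading off the three components of the image through the decomposition of Lemma (\ref{2 form for product system}). Throughout I would use the commutator identity $[D,x\otimes y]=[D_1,x]\otimes y+\gamma_1 x\otimes[D_2,y]$ together with the relations $\gamma_1 a=a\gamma_1$ for $a\in\mathcal{A}_1$, $\gamma_1[D_1,a]=-[D_1,a]\gamma_1$, and $\gamma_1^2=1$, exactly as in the proofs of Lemmas (\ref{first form}) and (\ref{numerator of 2nd form}). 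Since the global differential $d$ is already well-defined on $\Omega_D^1(\mathcal{A})$ as part of the Dirac dga, no separate check of independence of the chosen presentation $\omega_1=\sum a_0[D_1,b_0]$ is needed; it is inherited from the well-definedness of $d_1$ and $d_2$.

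First I would treat a generator $\omega_1\otimes a_2$ of the first summand, writing $\omega_1=a_0[D_1,b_0]$ so that, as in Lemma (\ref{first form}), $\omega_1\otimes a_2=(a_0\otimes a_2)[D,b_0\otimes 1]$. Applying $d$ and expanding the two commutators (using $[D,b_0\otimes 1]=[D_1,b_0]\otimes 1$) gives $[D,a_0\otimes a_2][D,b_0\otimes 1]=[D_1,a_0][D_1,b_0]\otimes a_2+\gamma_1 a_0[D_1,b_0]\otimes[D_2,a_2]$. The first term lies in $\pi_1(\Omega^2(\mathcal{A}_1))\otimes\mathcal{A}_2$ and represents $d_1(\omega_1)\otimes a_2$; the second term, carrying the prefactor $\gamma_1$, is precisely the form in which the cross summand $\Omega_{D_1}^1(\mathcal{A}_1)\otimes\Omega_{D_2}^1(\mathcal{A}_2)$ sits inside $\pi(\Omega^2(\mathcal{A}))$ in the proof of Lemma (\ref{numerator of 2nd form}), so it represents $\omega_1\otimes d_2(a_2)$. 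Hence $d(\omega_1\otimes a_2)=\big(d_1(\omega_1)\otimes a_2\,,\,0\,,\,\omega_1\otimes d_2(a_2)\big)$.

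Next I would treat a generator $a_1\otimes\omega_2$ of the second summand, writing $\omega_2=a_0'[D_2,b_0']$ so that it equals $(a_1\otimes a_0')[D,1\otimes b_0']$ as in Lemma (\ref{first form}). Here $[D,1\otimes b_0']=\gamma_1\otimes[D_2,b_0']$, and after expanding $[D,a_1\otimes a_0'][D,1\otimes b_0']$ the $\gamma_1$ relations yield $-\gamma_1[D_1,a_1]\otimes a_0'[D_2,b_0']+a_1\otimes[D_2,a_0'][D_2,b_0']$. The second term lies in $\mathcal{A}_1\otimes\pi_2(\Omega^2(\mathcal{A}_2))$ and represents $a_1\otimes d_2(\omega_2)$, while the first term is again of the cross type $\gamma_1(\,\cdot\,)$ and represents $-d_1(a_1)\otimes\omega_2$. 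Thus $d(a_1\otimes\omega_2)=\big(0\,,\,a_1\otimes d_2(\omega_2)\,,\,-d_1(a_1)\otimes\omega_2\big)$, and adding the two computations by $\mathbb{C}$-linearity gives the stated formula.

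The only genuinely delicate point is the bookkeeping of the grading $\gamma_1$: both the second summand $\mathcal{A}_1\otimes\Omega_{D_2}^1(\mathcal{A}_2)$ of $\Omega_D^1(\mathcal{A})$ and the cross summand $\Omega_{D_1}^1(\mathcal{A}_1)\otimes\Omega_{D_2}^1(\mathcal{A}_2)$ of $\Omega_D^2(\mathcal{A})$ are realized inside $\mathcal{B}(\mathcal{H})$ with an explicit prefactor $\gamma_1$ on the first tensor factor (compare the proofs of Lemmas (\ref{first form}) and (\ref{numerator of 2nd form})), and it is the interaction of these prefactors — in particular the anticommutation $\gamma_1[D_1,a_1]=-[D_1,a_1]\gamma_1$ — that produces the relative minus sign between $\omega_1\otimes d_2(a_2)$ and $d_1(a_1)\otimes\omega_2$ in the last slot. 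Keeping the $\gamma_1$ placement consistent with those earlier identifications is what makes the cross-term come out with the correct sign.
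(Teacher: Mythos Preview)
Your argument is correct and follows essentially the same route as the paper: write each summand of $\Omega_D^1(\mathcal{A})$ in the form $(a\otimes b)[D,c\otimes 1]$ or $(a\otimes b)[D,1\otimes c]$ via Lemma~(\ref{first form}), apply $d$ by taking $[D,\cdot][D,\cdot]$, expand with the commutator identity and the $\gamma_1$ relations, and then read off the three components through Lemma~(\ref{2 form for product system}). The paper merely carries out the two cases simultaneously rather than separately, but the computation and the sign analysis are identical.
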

\begin{proof}
Let $\,\omega_1=\sum a_{0i}[D_1,a_{1i}]$ and $\,\omega_2=\sum b_{0i}[D_2,b_{1i}]$. Then,
\begin{eqnarray*}
(\omega_1\otimes a_2\,,\,a_1\otimes\omega_2) & = & \sum_i\left(a_{0i}[D_1,a_{1i}]\otimes a_2\,,\,a_1\otimes b_{0i}[D_2,b_{1i}]\right)\\
& = & \sum_i(a_{0i}\otimes a_2)[D,a_{1i}\otimes 1]+(a_1\otimes b_{0i})[D,1\otimes b_{1i}]
\end{eqnarray*}
as an element of $\Omega_D^1(\mathcal{A})$ (Lemma [\ref{first form}]). Hence,
\begin{eqnarray*}
d(\omega_1\otimes a_2\,,\,a_1\otimes\omega_2) & = & \sum_i[D,a_{0i}\otimes a_2][D,a_{1i}\otimes 1]+[D,a_1\otimes b_{0i}][D,1\otimes b_{1i}]\\
& = & \sum_i [D_1,a_{0i}][D_1,a_{1i}]\otimes a_2+a_1\otimes[D_2,b_{0i}][D_2,b_{1i}]\\
&  & \quad\,\,+\gamma_1a_{0i}[D_1,a_{1i}]\otimes[D_2,a_2]-\gamma_1[D_1,a_1]\otimes b_{0i}[D_2,b_{1i}]\\
& = & d_1(\omega_1)\otimes a_2+a_1\otimes d_2(\omega_2)+\gamma_1(\omega_1\otimes d_2(a_2)-d_1(a_1)\otimes\omega_2).
\end{eqnarray*}
Our conclusion now follows from the previous Lemma (\ref{2 form for product system}).
\end{proof}

\begin{lemma}\label{product map}
The product map $\,\Pi:\Omega_D^1(\mathcal{A})\times\Omega_D^1(\mathcal{A})\longrightarrow\Omega_D^2(\mathcal{A})$ is given by the following
\begin{itemize}
\item[(i)] $(\omega_1\otimes a_2\,,\,0).(\omega_1^\prime\otimes a_2^\prime\,,\,0)=(\omega_1\omega_1^\prime\otimes a_2a_2^\prime\,,\,0\,,\,0)$.
\item[(ii)] $(0\,,\,a_1\otimes\omega_2).(0\,,\,a_1^\prime\otimes\omega_2^\prime)=(0\,,\,a_1a_1^\prime\otimes\omega_2\omega_2^\prime\,,\,0)$.
\item[(iii)] $(0\,,\,a_1\otimes\omega_2).(\omega_1^\prime\otimes a_2^\prime\,,\,0)=(0\,,\,0\,,\,a_1\omega_1^\prime\otimes\omega_2a_2^\prime)$.
\item[(iv)] $(\omega_1\otimes a_2\,,\,0).(0\,,\,a_1^\prime\otimes\omega_2^\prime)=(0\,,\,0\,,\,-\omega_1a_1^\prime\otimes a_2\omega_2^\prime)$.
\end{itemize}
\end{lemma}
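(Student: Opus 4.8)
The plan is to read off $\Pi$ directly from the definition of the product in the Dirac dga: on representatives this product is nothing but operator multiplication inside $\mathcal{B}(\mathcal{H})$ followed by the quotient projection onto $\Omega_D^2(\mathcal{A})=\pi(\Omega^2(\mathcal{A}))/\pi(dJ_0^1(\mathcal{A}))$. The essential preliminary is an explicit operator representative for each summand occurring in Lemma (\ref{first form}) and Lemma (\ref{2 form for product system}). Using $[D,a_1\otimes 1]=[D_1,a_1]\otimes 1$ and $[D,1\otimes a_2]=\gamma_1\otimes[D_2,a_2]$, one finds that the abstract element $(\omega_1\otimes a_2\,,\,0)$ is represented by the plain operator $\omega_1\otimes a_2$, while $(0\,,\,a_1\otimes\omega_2)$ is represented by $\gamma_1 a_1\otimes\omega_2=(\gamma_1\otimes 1)(a_1\otimes\omega_2)$; the extra $\gamma_1$ is exactly the one appearing in the computation of Lemma (\ref{first form}). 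In the same spirit, computation (i) in the proof of Lemma (\ref{numerator of 2nd form}) shows that the mixed summand $\Omega_{D_1}^1(\mathcal{A}_1)\otimes\Omega_{D_2}^1(\mathcal{A}_2)$ of $\Omega_D^2(\mathcal{A})$ is represented by operators of the form $(\gamma_1\otimes 1)(\omega_1\otimes\omega_2)$.

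With these representatives fixed, I would carry out the four products and identify the result in the decomposition of Lemma (\ref{2 form for product system}), invoking only $\gamma_1^2=1$, the fact that $\gamma_1$ commutes with $\mathcal{A}_1$ and with $\pi_1(\Omega^2(\mathcal{A}_1))$, and the fact that $\gamma_1$ anticommutes with $\Omega_{D_1}^1(\mathcal{A}_1)$. For (i), $(\omega_1\otimes a_2)(\omega_1^\prime\otimes a_2^\prime)=\omega_1\omega_1^\prime\otimes a_2a_2^\prime$ lies in $\pi_1(\Omega^2(\mathcal{A}_1))\otimes\mathcal{A}_2$ with no $\gamma_1$, so after passing to the quotient its class sits in the first summand. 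For (ii), the two $\gamma_1$'s collide: $(\gamma_1 a_1\otimes\omega_2)(\gamma_1 a_1^\prime\otimes\omega_2^\prime)=\gamma_1 a_1\gamma_1 a_1^\prime\otimes\omega_2\omega_2^\prime=a_1a_1^\prime\otimes\omega_2\omega_2^\prime$, landing in the second summand. For (iii), $(\gamma_1 a_1\otimes\omega_2)(\omega_1^\prime\otimes a_2^\prime)=(\gamma_1\otimes 1)(a_1\omega_1^\prime\otimes\omega_2 a_2^\prime)$, which is precisely the representative of $a_1\omega_1^\prime\otimes\omega_2 a_2^\prime$ in the mixed summand. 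For (iv), $(\omega_1\otimes a_2)(\gamma_1 a_1^\prime\otimes\omega_2^\prime)=\omega_1\gamma_1 a_1^\prime\otimes a_2\omega_2^\prime=-(\gamma_1\otimes 1)(\omega_1 a_1^\prime\otimes a_2\omega_2^\prime)$, giving the mixed summand with the claimed minus sign.

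I expect the only genuine subtlety — and the spot where a careless computation would go astray — to be the bookkeeping of these $\gamma_1$ factors, which are invisible at the level of the abstract direct-sum decomposition yet govern both the vanishing of cross terms and the crucial sign. Cases (iii) and (iv) are the instructive ones: they differ only in the order of the two factors, and the minus sign in (iv) arises entirely from the single anticommutation $\omega_1\gamma_1=-\gamma_1\omega_1$, whereas in (ii) the \emph{even} number of $\gamma_1$'s is exactly what eliminates them. Once the representatives described in the first paragraph are in place, each of the four identities reduces to a one-line manipulation of operators on $\mathcal{H}_1\otimes\mathcal{H}_2$.
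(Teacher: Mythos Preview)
Your proposal is correct and follows essentially the same approach as the paper: compute the products at the level of operator representatives in $\mathcal{B}(\mathcal{H}_1\otimes\mathcal{H}_2)$, keeping track of the $\gamma_1$ factors, and then identify the result via the decomposition of Lemma~(\ref{2 form for product system}). The paper carries this out explicitly only for case~(iv), declaring (i)--(iii) straightforward, whereas you spell out all four cases after first fixing the operator representatives systematically; the key anticommutation $\omega_1\gamma_1=-\gamma_1\omega_1$ that produces the sign in (iv) is exactly the mechanism the paper highlights.
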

\begin{proof}
Part $(i),(ii),(iii)$ are straightforward verification using Lemma (\ref{first form}\,,\,\ref{numerator of 2nd form}\,,\,\ref{2 form for product system}).
We only explain part $(iv)$ to show why the minus sign appears. Let $\,\omega_1=\sum_ix_{0i}[D_1,x_{1i}]$ and $\,\omega_2^\prime=\sum_iy_{0i}[D_2,y_{1i}]$.
Then $\,\omega_1\otimes a_2=\sum_i(x_{0i}\otimes a_2)[D,x_{1i}\otimes 1]$ and $a_1^\prime\otimes\omega_2^\prime=\sum_i(a_1^\prime\otimes y_{0i})
[D,1\otimes y_{1i}]$ as elements of $\Omega_D^1(\mathcal{A})$. Hence,
\begin{eqnarray*}
(\omega_1\otimes a_2\,,\,0).(0\,,\,a_1^\prime\otimes\omega_2^\prime)
& = & \sum_{i,j}(x_{0i}\otimes a_2)[D,x_{1i}\otimes 1](a_1^\prime\otimes y_{0j})[D,1\otimes y_{1j}]\\
& = & \sum_{i,j}(x_{0i}[D_1,x_{1i}]\otimes a_2)(\gamma_1a_1^\prime\otimes y_{0j}[D_2,y_{1j}])\\
& = & \sum_{i,j}-\gamma_1x_{0i}[D_1,x_{1i}]a_1^\prime\otimes a_2y_{0j}[D_2,y_{1j}]\\
& = & -\gamma_1\omega_1a_1^\prime\otimes a_2\omega_2^\prime
\end{eqnarray*}
This element is identified with $(0\,,\,0\,,\,-\omega_1a_1^\prime\otimes a_2\omega_2^\prime)\in\Omega_D^2(\mathcal{A})$ by Lemma (\ref{2 form for product system}).
\end{proof}

\begin{proposition}\label{the curvature}
The curvature $\,\varTheta$ of the connection $\nabla$ is given by
\begin{center}
$\varTheta(e_1\otimes e_2)=\varTheta_1(e_1)\otimes e_2+e_1\otimes\varTheta_2(e_2)$
\end{center}
where, $\,\varTheta_1,\varTheta_2$ are the curvatures associated to the connections $\nabla_1,\nabla_2$ respectively.
\end{proposition}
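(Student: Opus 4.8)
The plan is to compute the curvature straight from its definition $\varTheta=\widetilde{\nabla}\circ\nabla$ and to read off the three summands using the decomposition
\[
\Omega_D^2(\mathcal{A})\cong\Omega_{D_1}^2(\mathcal{A}_1)\otimes\mathcal{A}_2\bigoplus\mathcal{A}_1\otimes\Omega_{D_2}^2(\mathcal{A}_2)\bigoplus\Omega_{D_1}^1(\mathcal{A}_1)\otimes\Omega_{D_2}^1(\mathcal{A}_2)
\]
supplied by Lemma (\ref{2 form for product system}). First I would fix notation $\nabla_1(e_1)=\sum_a\xi_a\otimes\alpha_a$ and $\nabla_2(e_2)=\sum_b\zeta_b\otimes\beta_b$, with $\alpha_a\in\Omega_{D_1}^1(\mathcal{A}_1)$ and $\beta_b\in\Omega_{D_2}^1(\mathcal{A}_2)$, and then, invoking Lemma (\ref{first form}), rewrite the connection as $\nabla(e_1\otimes e_2)=\sum_a(\xi_a\otimes e_2)\otimes(\alpha_a\otimes 1,0)+\sum_b(e_1\otimes\zeta_b)\otimes(0,1\otimes\beta_b)$ inside $\mathcal{E}\otimes_\mathcal{A}\Omega_D^1(\mathcal{A})$.

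Next I would apply $\widetilde{\nabla}$ to each summand through $\widetilde{\nabla}(\xi\otimes\omega)=(\nabla\xi)\omega+\xi\otimes d\omega$, splitting every term into a \emph{differential part} $\xi\otimes d\omega$ and a \emph{product part} $(\nabla\xi)\omega$. For the differential parts, Lemma (\ref{2nd differential}) together with $d_1(1)=d_2(1)=[D_j,1]=0$ yields $d(\alpha_a\otimes 1,0)=(d_1\alpha_a\otimes 1,0,0)$ and $d(0,1\otimes\beta_b)=(0,1\otimes d_2\beta_b,0)$, so these contribute nothing to the mixed component. For the product parts I would expand $\nabla(\xi_a\otimes e_2)$ and $\nabla(e_1\otimes\zeta_b)$ once more by the defining rule of $\nabla$ and multiply on the right by $(\alpha_a\otimes 1,0)$, respectively $(0,1\otimes\beta_b)$, using the four cases of the product map $\Pi$ in Lemma (\ref{product map}).

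Collecting terms according to the decomposition above, the components valued in $\Omega_{D_1}^2(\mathcal{A}_1)\otimes\mathcal{A}_2$ reassemble—via the canonical isomorphism of Lemma (\ref{isomorphism involving product module})—into $\widetilde{\nabla}_1(\nabla_1 e_1)\otimes e_2=\varTheta_1(e_1)\otimes e_2$, and symmetrically the components in $\mathcal{A}_1\otimes\Omega_{D_2}^2(\mathcal{A}_2)$ give $e_1\otimes\varTheta_2(e_2)$. The crux is the remaining component: the product part produces exactly two terms valued in $\Omega_{D_1}^1(\mathcal{A}_1)\otimes\Omega_{D_2}^1(\mathcal{A}_2)$, namely $\sum_{a,b}(\xi_a\otimes\zeta_b)\otimes(0,0,\alpha_a\otimes\beta_b)$ arising from case (iii) of Lemma (\ref{product map}) and $\sum_{a,b}(\xi_a\otimes\zeta_b)\otimes(0,0,-\alpha_a\otimes\beta_b)$ arising from case (iv). I expect the main obstacle to be the careful bookkeeping of these mixed terms and verifying their cancellation; the cancellation is forced by the opposite signs in cases (iii) and (iv), which in turn originate from the grading $\gamma_1$ in $D=D_1\otimes 1+\gamma_1\otimes D_2$. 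Once this vanishing is in place, only the two diagonal pieces survive and the stated formula $\varTheta(e_1\otimes e_2)=\varTheta_1(e_1)\otimes e_2+e_1\otimes\varTheta_2(e_2)$ follows.
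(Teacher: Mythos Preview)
Your proposal is correct and follows essentially the same approach as the paper's own proof: both compute $\widetilde{\nabla}\circ\nabla$ directly, invoke Lemma~(\ref{2nd differential}) for the differential terms, and use cases~(iii)--(iv) of Lemma~(\ref{product map}) to exhibit the cancellation of the mixed $\Omega_{D_1}^1\otimes\Omega_{D_2}^1$ contributions. If anything, your bookkeeping of the signs and the use of $d_j(1)=0$ is slightly more explicit than the paper's.
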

\begin{proof}
For $\nabla_1(e_1)=\sum_ix_i\otimes\omega_i$ and $\nabla_2(e_2)=\sum_i y_i\otimes v_i$, we have
\begin{eqnarray*}
\nabla(e_1\otimes e_2) & = & \nabla_1(e_1)\otimes e_2+e_1\otimes\nabla_2(e_2)\\
& = & \sum_ix_i\otimes\omega_i\otimes e_2+e_1\otimes y_i\otimes v_i\\
& = & \sum_i(x_i\otimes e_2)\otimes\omega_i+(e_1\otimes y_i)\otimes v_i
\end{eqnarray*}
using Lemma (\ref{isomorphism involving product module}). Since $\varTheta=\widetilde{\nabla}\circ\nabla$, we get using Lemma (\ref{2nd differential})
\begin{eqnarray*}
\widetilde{\nabla}\circ\nabla(e_1\otimes e_2) & = & \widetilde{\nabla}(\nabla_1(e_1)\otimes e_2+e_1\otimes\nabla_2(e_2))\\
& = & \sum_i\nabla(x_i\otimes e_2)\omega_i+x_i\otimes e_2\otimes d\omega_i+\nabla(e_1\otimes y_i)v_i+e_1\otimes y_i\otimes dv_i\\
& = & \sum_i(\nabla_1(x_i)\otimes e_2)\omega_i+(x_i\otimes \nabla_2(e_2))\omega_i+x_i\otimes e_2\otimes d\omega_i+e_1\otimes y_i\otimes dv_i\\
&   &\quad\,\,+(\nabla_1(e_1)\otimes y_i)v_i+(e_1\otimes\nabla_2(y_i))v_i\\
& = & \sum_i\left(\nabla_1(x_i)\omega_i\otimes e_2+(x_i\otimes d_1\omega_i)\otimes e_2+e_1\otimes\nabla_2(y_i)v_i+e_1\otimes y_i\otimes d_2v_i\right)\\
&   &\quad\quad+\left(\sum_jx_i\otimes y_j\otimes v_j\omega_i+x_j\otimes y_i\otimes\omega_jv_i\right)\\
& = & \sum_i\widetilde{\nabla_1}(x_i\otimes\omega_i)\otimes e_2+e_1\otimes\widetilde{\nabla_2}(y_i\otimes v_i)\\
& = & \widetilde{\nabla_1}(\nabla_1(e_1))\otimes e_2+e_1\otimes\widetilde{\nabla_1}(\nabla_2(e_2))\\
& = & \varTheta_1(e_1)\otimes e_2+e_1\otimes\varTheta_2(e_2)\,.
\end{eqnarray*}
The third equality from below comes from part $(iii)$ and $(iv)$ of Lemma (\ref{product map}).
\end{proof}

\begin{lemma}\label{component of curvature are linear}
Both $\varTheta_1\otimes 1$ and $1\otimes\varTheta_2$ are in $\mathcal{H}om_\mathcal{A}(\mathcal{E},\mathcal{E}\otimes_\mathcal{A}\Omega_D^2
(\mathcal{A}))$.
\end{lemma}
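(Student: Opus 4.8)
The plan is to derive the $\mathcal{A}$-linearity of the two components from the $\mathcal{A}$-linearity of the full curvature $\varTheta$, combined with the direct sum decomposition of $\Omega_D^2(\mathcal{A})$ furnished by Lemma~\ref{2 form for product system}. Recall from Section~$[2]$ that for the product connection $\nabla$ the curvature $\varTheta=\widetilde{\nabla}\circ\nabla$ already lies in $\mathcal{H}om_\mathcal{A}(\mathcal{E},\mathcal{E}\otimes_\mathcal{A}\Omega_D^2(\mathcal{A}))$, hence is right $\mathcal{A}$-linear; and by Proposition~\ref{the curvature} we have $\varTheta=(\varTheta_1\otimes 1)+(1\otimes\varTheta_2)$ under the identification $\varTheta(e_1\otimes e_2)=\varTheta_1(e_1)\otimes e_2+e_1\otimes\varTheta_2(e_2)$.

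First I would tensor the bimodule decomposition of Lemma~\ref{2 form for product system} with $\mathcal{E}$ over $\mathcal{A}$. Since $\mathcal{E}\otimes_\mathcal{A}(-)$ preserves direct sums, this yields a decomposition of $\mathcal{E}\otimes_\mathcal{A}\Omega_D^2(\mathcal{A})$ as a direct sum of three right $\mathcal{A}$-modules; using the $\Omega^2$-analogue of Lemma~\ref{isomorphism involving product module}, the first two summands identify with $(\mathcal{E}_1\otimes_{\mathcal{A}_1}\Omega_{D_1}^2(\mathcal{A}_1))\otimes\mathcal{E}_2$ and $\mathcal{E}_1\otimes(\mathcal{E}_2\otimes_{\mathcal{A}_2}\Omega_{D_2}^2(\mathcal{A}_2))$, while the third is $\mathcal{E}\otimes_\mathcal{A}(\Omega_{D_1}^1(\mathcal{A}_1)\otimes\Omega_{D_2}^1(\mathcal{A}_2))$. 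The key point is that this is a decomposition of \emph{right} $\mathcal{A}$-modules, so the associated projections $P_1,P_2$ onto the first two summands are right $\mathcal{A}$-module maps.

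Now by Proposition~\ref{the curvature} the map $\varTheta_1\otimes 1$ takes values precisely in the first summand and $1\otimes\varTheta_2$ in the second, with no contribution to the third; hence $\varTheta_1\otimes 1=P_1\circ\varTheta$ and $1\otimes\varTheta_2=P_2\circ\varTheta$. As each $P_i$ is right $\mathcal{A}$-linear and $\varTheta$ is right $\mathcal{A}$-linear, the compositions are right $\mathcal{A}$-linear, which is exactly the assertion. Alternatively one can verify this by hand: writing $\varTheta_1(e_1)=\sum_i x_i\otimes\eta_i$ with $\eta_i\in\Omega_{D_1}^2(\mathcal{A}_1)$ and using $\mathcal{A}_1$-linearity of $\varTheta_1$, one gets $(\varTheta_1\otimes 1)((e_1\otimes e_2)(a_1\otimes a_2))=\varTheta_1(e_1)a_1\otimes e_2a_2$, which equals $((\varTheta_1\otimes 1)(e_1\otimes e_2))(a_1\otimes a_2)$ after moving the factor $1\otimes a_2$ across the tensor product over $\mathcal{A}$. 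The only genuine obstacle is this module-theoretic bookkeeping in the first step: one must confirm that the decomposition of Lemma~\ref{2 form for product system} survives $\mathcal{E}\otimes_\mathcal{A}(-)$ as right $\mathcal{A}$-modules and that the right $\mathcal{A}$-action on each summand is the expected one, so that $P_1,P_2$ are genuinely module maps. This is where the $\Omega^2$-version of Lemma~\ref{isomorphism involving product module} and the explicit bimodule structures established in Lemmas~\ref{numerator of 2nd form} and \ref{denominator of 2nd form} are invoked; once these identifications are pinned down, both the projection argument and the direct check are immediate.
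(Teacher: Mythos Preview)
Your proposal is correct. The paper's own proof is the two-line direct check you offer as your ``alternative'': since each $\varTheta_j$ is $\mathcal{A}_j$-linear, one verifies $\mathcal{A}$-linearity of $\varTheta_1\otimes 1$ and $1\otimes\varTheta_2$ by hand on elementary tensors, and then invokes Lemma~\ref{2 form for product system} only to ensure that the targets $\Omega_{D_1}^2(\mathcal{A}_1)\otimes\mathcal{A}_2$ and $\mathcal{A}_1\otimes\Omega_{D_2}^2(\mathcal{A}_2)$ really sit inside $\Omega_D^2(\mathcal{A})$.

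Your primary route is genuinely different: you import the $\mathcal{A}$-linearity of the full curvature $\varTheta$ (a general fact from Section~2) together with Proposition~\ref{the curvature}, and then realize the two components as $P_i\circ\varTheta$ via the right-$\mathcal{A}$-module projections coming from the bimodule splitting of Lemma~\ref{2 form for product system}. This is a clean structural argument and avoids any coordinate computation, at the cost of depending on Proposition~\ref{the curvature} and on checking that $\mathcal{E}\otimes_\mathcal{A}(-)$ carries the bimodule direct sum to a right-module direct sum with $\mathcal{A}$-linear projections. The paper's approach, by contrast, uses nothing beyond the $\mathcal{A}_j$-linearity of the individual $\varTheta_j$ and is logically lighter; your projection argument is more conceptual and would generalize immediately to any situation where the target decomposes as a bimodule direct sum.
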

\begin{proof}
It is easy to verify that both $\varTheta_1\otimes 1$ and $1\otimes\varTheta_2$ are $\mathcal{A}$-linear, because $\varTheta_j$ are
$\mathcal{A}_j$-linear for $j=1,2$. Conclusion now follows from Lemma (\ref{2 form for product system}).
\end{proof}

Recall the inner-product on $\pi(\Omega^2(\mathcal{A}))$ from (\ref{inner-product by Dixmier trace}). By Lemma (\ref{numerator of 2nd form}),
we have the induced inner-product on the subspaces $\,\pi_1(\Omega^2(\mathcal{A}_1))\otimes\mathcal{A}_2\,,\,\mathcal{A}_1\otimes\pi_2
(\Omega^2(\mathcal{A}_2))$ and $\Omega_{D_1}^1(\mathcal{A}_1)\otimes\Omega_{D_2}^1(\mathcal{A}_2)$ of $\pi(\Omega^2(\mathcal{A}))$.

\begin{lemma}\label{induced inner-product}
The induced inner-product on the subspaces $\,\pi_1(\Omega^2(\mathcal{A}_1))\otimes\mathcal{A}_2\,,\,\mathcal{A}_1\otimes\pi_2(\Omega^2
(\mathcal{A}_2))$ and $\Omega_{D_1}^1(\mathcal{A}_1)\otimes\Omega_{D_2}^1(\mathcal{A}_2)$ are given, upto multiplication by a positive constant,
by the following
\begin{itemize}
\item[(i)] $\langle\,T_1\otimes a_2\,,\,T_1^\prime\otimes a_2^\prime\rangle=Tr_\omega(T_1^*T_1^\prime|D_1|^{-k})Tr_\omega(a_2^*a_2^\prime|D_2|^{-l})$
\item[(ii)] $\langle\,a_1\otimes T_2\,,\,a_1^\prime\otimes T_2^\prime\rangle=Tr_\omega(a_1^*a_1^\prime|D_1|^{-k})Tr_\omega(T_2^*T_2^\prime|D_2|^{-l})$
\item[(iii)] $\langle\,T_1\otimes S_1\,,\,T_1^\prime\otimes S_1^\prime\rangle=Tr_\omega(T_1^*T_1^\prime|D_1|^{-k})Tr_\omega(S_1^*S_1^\prime|D_2|^{-l})$
\end{itemize}
respectively, where $Tr_\omega$ denotes the Dixmier trace.
\end{lemma}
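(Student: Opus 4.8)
The plan is to reduce all three assertions to a single factorization statement for the Dixmier trace on the product spectral triple: for bounded operators $X$ on $\mathcal{H}_1$ and $Y$ on $\mathcal{H}_2$,
\[
Tr_\omega\big((X\otimes Y)|D|^{-(k+l)}\big)=C\,Tr_\omega\big(X|D_1|^{-k}\big)\,Tr_\omega\big(Y|D_2|^{-l}\big)
\]
for one positive constant $C$ depending only on $k$ and $l$. Granting this, each of (i), (ii), (iii) follows by substituting the appropriate $X,Y$ into the defining inner-product (\ref{inner-product by Dixmier trace}), which is precisely $\langle S,S'\rangle=Tr_\omega(S^*S'|D|^{-d})$ with $d=k+l$; and the same constant $C$ serves all three, matching the phrase ``up to multiplication by a positive constant'' in the statement.

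First I would record the algebraic identity $D^2=D_1^2\otimes 1+1\otimes D_2^2$. The cross term $(D_1\gamma_1+\gamma_1 D_1)\otimes D_2$ vanishes because $\gamma_1$ anticommutes with $D_1$, and $\gamma_1^2=1$ eliminates the remaining factor. Hence the heat semigroup factors, $e^{-tD^2}=e^{-tD_1^2}\otimes e^{-tD_2^2}$, so that $Tr\big((X\otimes Y)e^{-tD^2}\big)=Tr(Xe^{-tD_1^2})\,Tr(Ye^{-tD_2^2})$ for every $t>0$. Writing the small-$t$ asymptotics $Tr(Xe^{-tD_1^2})\sim a_1\,t^{-k/2}$ and $Tr(Ye^{-tD_2^2})\sim a_2\,t^{-l/2}$, the product acquires leading term $a_1a_2\,t^{-(k+l)/2}$, which is exactly where the additivity of summability ($D$ is $(k+l)$-summable) becomes visible. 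Using the standard passage from the leading heat coefficient to the Dixmier trace, $Tr_\omega(P|D|^{-d})=a/\Gamma(\tfrac{d}{2}+1)$ whenever $Tr(Pe^{-tD^2})\sim a\,t^{-d/2}$, I obtain the factorization with
\[
C=\frac{\Gamma(\tfrac{k}{2}+1)\,\Gamma(\tfrac{l}{2}+1)}{\Gamma(\tfrac{k+l}{2}+1)}>0.
\]

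I would dispatch (i) and (ii) by direct substitution, taking $X=T_1^*T_1'$, $Y=a_2^*a_2'$ and their mirror image. Case (iii) needs one extra observation: under the identification of Lemma (\ref{numerator of 2nd form}), an element $T_1\otimes S_1$ of $\Omega_{D_1}^1(\mathcal{A}_1)\otimes\Omega_{D_2}^1(\mathcal{A}_2)$ is represented inside $\pi(\Omega^2(\mathcal{A}))$ by the operator $(\gamma_1\otimes 1)(T_1\otimes S_1)$. Forming $\langle(\gamma_1\otimes 1)(T_1\otimes S_1),(\gamma_1\otimes 1)(T_1'\otimes S_1')\rangle$ and using $\gamma_1=\gamma_1^*$, the adjoint of the first argument places the two factors $\gamma_1\otimes 1$ adjacent in the middle, where they cancel via $\gamma_1^2=1$; what remains is $Tr_\omega\big((T_1^*T_1'\otimes S_1^*S_1')|D|^{-(k+l)}\big)$, to which the factorization applies and yields (iii).

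The hard part will be making the Dixmier-trace factorization rigorous, since $Tr_\omega$ is not multiplicative in general and the clean formula relating it to heat asymptotics presupposes that the operators involved are measurable (the Dixmier trace independent of $\omega$) and that the leading small-$t$ asymptotic genuinely exists. I would secure this either by invoking the regularity/dimension-spectrum hypotheses that force simple poles of the zeta functions $s\mapsto Tr(P|D_j|^{-s})$---legitimising the Mellin-transform computation above and the residue form $Tr_\omega(P|D|^{-d})=\tfrac{1}{d}\,\mathrm{Res}_{s=d}Tr(P|D|^{-s})$---or by citing the known factorization theorem for Dixmier traces on products of spectral triples. In either case the explicit value of $C$ is irrelevant for the subadditivity and additivity conclusions that follow, so only its positivity needs to be retained.
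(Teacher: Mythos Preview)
Your proposal is correct and follows essentially the same route as the paper: both reduce the lemma to the Dixmier-trace factorization
\[
Tr_\omega\big((X\otimes Y)|D|^{-(k+l)}\big)=\frac{\Gamma(\tfrac{k}{2}+1)\Gamma(\tfrac{l}{2}+1)}{\Gamma(\tfrac{k+l}{2}+1)}\,Tr_\omega\big(X|D_1|^{-k}\big)\,Tr_\omega\big(Y|D_2|^{-l}\big),
\]
after which (i)--(iii) are immediate substitutions. The paper simply cites this identity from Connes' book (Page~576 of \cite{Con2}) and stops, whereas you sketch its derivation via the heat-kernel factorization $e^{-tD^2}=e^{-tD_1^2}\otimes e^{-tD_2^2}$ and the passage from leading small-$t$ asymptotics to the Dixmier trace; your explicit handling of the $\gamma_1$ factor in case~(iii) and your acknowledgement of the measurability hypotheses are more careful than the paper's own treatment at this point.
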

\begin{proof}
Assume that $(\mathcal{A}_1,\mathcal{H}_1,D_1)$ is a $k$-summable spectral triple and $(\mathcal{A}_2,\mathcal{H}_2,D_2)$ is a $\ell$-summable
spectral triple. Then, $(\mathcal{A},\mathcal{H},D)$ is a $(k+\ell)$-summable spectral triple, and we have
\begin{eqnarray*}
\frac{\Gamma\left(\frac{k+\ell}{2}+1\right)}{\Gamma(\frac{k}{2}+1)\Gamma(\frac{\ell}{2}+1)}Tr_\omega\left((T_1\otimes T_2)|D|^{-(k+\ell)}\right)
& = & Tr_\omega\left(T_1|D_1|^{-k}\right)Tr_\omega\left(T_2|D_2|^{-\ell}\right)
\end{eqnarray*}
for all $T_j\in\mathcal{B}(\mathcal{H}_j),\,j=1,2$, (Page $576$ in \cite{Con2}) where $Tr_\omega$ denotes the Dixmier trace. The number
$\frac{\Gamma\left(\frac{k+\ell}{2}+1\right)}{\Gamma(\frac{k}{2}+1)\Gamma(\frac{\ell}{2}+1)}$ is a positive real constant, and this completes
the proof.
\end{proof}

\begin{proposition}\label{an automatic orthogonality}
The subspaces $\,\pi_1(\Omega^2(\mathcal{A}_1))\otimes\mathcal{A}_2+\mathcal{A}_1\otimes\pi_2(\Omega^2(\mathcal{A}_2))$ and $\,\Omega_{D_1}^1(
\mathcal{A}_1)\otimes\Omega_{D_2}^1(\mathcal{A}_2)$ of $\,\pi(\Omega^2(\mathcal{A}))$ are orthogonal to each other.
\end{proposition}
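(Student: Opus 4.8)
The plan is to exploit the $\mathbb{Z}_2$-grading $G:=\gamma_1\otimes 1$ on $\mathcal{H}=\mathcal{H}_1\otimes\mathcal{H}_2$, which will act as a supersymmetry decoupling the two subspaces inside $\pi(\Omega^2(\mathcal{A}))$. Since the inner-product (\ref{inner-product by Dixmier trace}) is bilinear, it suffices to verify orthogonality on spanning elements. The starting observation, read off from the proof of Lemma (\ref{numerator of 2nd form}), is that a typical element of $\Omega_{D_1}^1(\mathcal{A}_1)\otimes\Omega_{D_2}^1(\mathcal{A}_2)$ is represented inside $\pi(\Omega^2(\mathcal{A}))$ as the operator $\gamma_1 S_1\otimes S_2=(\gamma_1\otimes 1)(S_1\otimes S_2)$ with $S_1\in\Omega_{D_1}^1(\mathcal{A}_1)$ and $S_2\in\Omega_{D_2}^1(\mathcal{A}_2)$; the crucial feature is the leftover $\gamma_1$. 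By contrast, a spanning element $X$ of $\pi_1(\Omega^2(\mathcal{A}_1))\otimes\mathcal{A}_2+\mathcal{A}_1\otimes\pi_2(\Omega^2(\mathcal{A}_2))$ is either $T_1\otimes a_2$ or $a_1\otimes T_2$, with $T_j\in\pi_j(\Omega^2(\mathcal{A}_j))$, and carries no such factor.

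First I would record the relevant $\gamma_1$-parities. Since $\gamma_1$ anticommutes with every $[D_1,\,\cdot\,]$ and commutes with $\mathcal{A}_1$, any product of an even number of first-factor one-forms (in particular $T_1$, its adjoint, and any $a_1$) commutes with $\gamma_1$, whereas $S_1$, a single-commutator term, anticommutes with $\gamma_1$. Computing $X^*Y$ for $X$ in the first subspace and $Y=\gamma_1 S_1\otimes S_2$ in the second, I would pull the surplus $\gamma_1$ to the front and find, in both cases, $X^*Y=(\gamma_1\otimes 1)\bigl(M_1\otimes M_2\bigr)$, where the first tensor factor $M_1$ ($=T_1^*S_1$ or $a_1^*S_1$) anticommutes with $\gamma_1$. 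Hence $X^*Y$ is \emph{odd} for the grading $G$, i.e. $G(X^*Y)G=-X^*Y$.

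Next I would check that $G$ commutes with $|D|^{-d}$. Indeed $D^2=D_1^2\otimes 1+1\otimes D_2^2$, because the cross term $(D_1\gamma_1+\gamma_1 D_1)\otimes D_2$ vanishes by $\gamma_1 D_1=-D_1\gamma_1$, and $\gamma_1\otimes 1$ commutes with both $D_1^2\otimes 1$ and $1\otimes D_2^2$; therefore it commutes with every bounded function of $D^2$, in particular with $|D|^{-d}$.

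The conclusion then comes from cyclicity of the Dixmier trace. Writing $C:=X^*Y|D|^{-d}\in\mathcal{L}^{(1,\infty)}$ and using $G^2=1$ together with cyclicity, $Tr_\omega(C)=Tr_\omega(GCG)$; since $G$ commutes with $|D|^{-d}$ and $X^*Y$ is $G$-odd, $GCG=(GX^*YG)|D|^{-d}=-C$, which forces $\langle X,Y\rangle=Tr_\omega(C)=0$. Orthogonality of the two subspaces follows by bilinearity. I expect the only delicate points to be the bookkeeping of the embedded $\gamma_1$ supplied by Lemma (\ref{numerator of 2nd form}) and the justification that $Tr_\omega$ is cyclic on the products at hand (all the surplus factors being bounded while $|D|^{-d}\in\mathcal{L}^{(1,\infty)}$). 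Once these are in place the vanishing is forced by the parity mismatch alone, which is exactly the ``automatic'' orthogonality asserted in the statement; alternatively, the same parity computation carried out on $\mathcal{H}_1$ via the product trace formula of Lemma (\ref{induced inner-product}) yields a factor $Tr_\omega(\gamma_1 T_1^*S_1|D_1|^{-k})=0$, giving an independent check.
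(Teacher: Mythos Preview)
Your proof is correct and follows essentially the same parity argument as the paper: both exploit that the embedded $\Omega_{D_1}^1\otimes\Omega_{D_2}^1$ carries an extra $\gamma_1$, then use the commutation/anticommutation of $\gamma_1$ with the relevant operators together with cyclicity of the Dixmier trace to force $\langle\xi,\eta\rangle=-\langle\xi,\eta\rangle$. The only packaging difference is that the paper first splits the trace via the product formula $Tr_\omega\bigl((T_1\otimes T_2)|D|^{-(k+\ell)}\bigr)\propto Tr_\omega(T_1|D_1|^{-k})Tr_\omega(T_2|D_2|^{-\ell})$ and runs the $\gamma_1$-argument on the $\mathcal{H}_1$-factor alone, whereas you work directly on $\mathcal{H}_1\otimes\mathcal{H}_2$ with the grading $G=\gamma_1\otimes 1$ (and mention the paper's route as your alternative check); your version is slightly more self-contained since it does not need to invoke the product trace identity.
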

\begin{proof}
Recall from Lemma (\ref{numerator of 2nd form}) that arbitrary element of $\Omega_{D_1}^1(\mathcal{A}_1)\otimes\Omega_{D_2}^1(\mathcal{A}_2)\subseteq
\pi(\Omega^2(\mathcal{A}))$ is of the form $\sum_{i,j}(a_{0i}\otimes a_{1i})[D,1\otimes b_{1j}][D,b_{0j}\otimes 1]=\sum_{i,j}\gamma_1a_{0i}[D_1,b_{0j}]
\otimes a_{1i}[D_2,b_{1j}]$. Let $\xi$ be this element. For any element $\,\eta=\eta_1\otimes\eta_2\in\pi_1(\Omega^2(\mathcal{A}_1))\otimes\mathcal{A}_2+
\mathcal{A}_1\otimes\pi_2(\Omega^2(\mathcal{A}_2))\subseteq\pi(\Omega^2(\mathcal{A}))$ it follows, in view of the inner-product given in
(\ref{inner-product by Dixmier trace}) and the fact
\begin{eqnarray*}
\frac{\Gamma\left(\frac{k+\ell}{2}+1\right)}{\Gamma(\frac{k}{2}+1)\Gamma(\frac{\ell}{2}+1)}Tr_\omega\left((T_1\otimes T_2)|D|^{-(k+\ell)}\right)
& = & Tr_\omega\left(T_1|D_1|^{-k}\right)Tr_\omega\left(T_2|D_2|^{-\ell}\right)\,,
\end{eqnarray*}
that
\begin{eqnarray*}
&  & \langle\xi,\eta\rangle\\
& = & Tr_\omega\left(\xi^*\eta|D|^{-(k+\ell)}\right)\\
& = & \sum_{i,j}\left(\frac{\Gamma\left(\frac{k+\ell}{2}+1\right)}{\Gamma(\frac{k}{2}+1)\Gamma(\frac{\ell}{2}+1)}\right)^{-1}Tr_\omega\left([D_1,b_{0j}]^*
a_{0i}^*\gamma_1\eta_1|D_1|^{-k}\right)\,Tr_\omega\left([D_2,b_{1j}]^*a_{1i}\eta_2|D_2|^{-\ell}\right)\\
& = & \sum_{i,j}\left(\frac{\Gamma\left(\frac{k+\ell}{2}+1\right)}{\Gamma(\frac{k}{2}+1)\Gamma(\frac{\ell}{2}+1)}\right)^{-1}Tr_\omega\left([D_1,b_{0j}]^*
a_{0i}^*\eta_1|D_1|^{-k}\gamma_1\right)\,Tr_\omega\left([D_2,b_{1j}]^*a_{1i}\eta_2|D_2|^{-\ell}\right)\\
& = & \sum_{i,j}\left(\frac{\Gamma\left(\frac{k+\ell}{2}+1\right)}{\Gamma(\frac{k}{2}+1)\Gamma(\frac{\ell}{2}+1)}\right)^{-1}Tr_\omega\left(\gamma_1[D_1,b_{0j}]^*
a_{0i}^*\eta_1|D_1|^{-k}\right)\,Tr_\omega\left([D_2,b_{1j}]^*a_{1i}\eta_2|D_2|^{-\ell}\right)\\
& = & -\sum_{i,j}\left(\frac{\Gamma\left(\frac{k+\ell}{2}+1\right)}{\Gamma(\frac{k}{2}+1)\Gamma(\frac{\ell}{2}+1)}\right)^{-1}Tr_\omega\left([D_1,b_{0j}]^*a_{0i}^*
\gamma_1\eta_1|D_1|^{-k}\right)\,Tr_\omega\left([D_2,b_{1j}]^*a_{1i}\eta_2|D_2|^{-\ell}\right)\\
& = & -\langle\xi,\eta\rangle
\end{eqnarray*}
Here, the minus sign appears at the end because of the facts that $\,Tr_\omega$ is a trace, $\gamma_1$ commutes with $\eta_1$ and $a_{0i}$,
anticommutes with $D_1$, hence commutes with $|D_1|$ and $|D_1|^{-k}$, anticommutes with $[D_1,b_{0j}]$. Since, $\,\xi$ and $\eta$ are arbitrary,
our claim follows.
\end{proof}

Conclusion of the above Propn. (\ref{an automatic orthogonality}) is that the algebraic direct sum in Lemma (\ref{numerator of 2nd form}) is an
orthogonal direct sum with the respect to the inner-product (\ref{inner-product by Dixmier trace}) on $\pi(\Omega^2(\mathcal{A}))$. That is, the
second algebraic direct sum in Lemma (\ref{2 form for product system}) is always an orthogonal direct sum. However, in general, the first algebraic
direct sum in Lemma (\ref{2 form for product system}) fails to be an orthogonal direct sum.

\begin{theorem}\label{subadditivity}
The Yang-Mills action functional is always subadditive.
\end{theorem}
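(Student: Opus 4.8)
The plan is to leverage the additive splitting of the curvature from Proposition~\ref{the curvature}, namely $\varTheta=\varTheta_1\otimes 1+1\otimes\varTheta_2$, together with the fact that $\mathcal{YM}$ is the square of a genuine inner-product norm. Writing $\|\phi\|:=\sqrt{\langle\langle\phi,\phi\rangle\rangle}$ for the norm induced by the inner-product (\ref{inner-product on curvature space}) on $\mathcal{H}om_\mathcal{A}(\mathcal{E},\mathcal{E}\otimes_\mathcal{A}\Omega_D^2(\mathcal{A}))$, both $\varTheta_1\otimes 1$ and $1\otimes\varTheta_2$ lie in this space by Lemma~\ref{component of curvature are linear}, so $\sqrt{\mathcal{YM}(\nabla)}=\|\varTheta_1\otimes 1+1\otimes\varTheta_2\|$. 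The triangle inequality then gives
\begin{center}
$\sqrt{\mathcal{YM}(\nabla)}\leq\|\varTheta_1\otimes 1\|+\|1\otimes\varTheta_2\|\,,$
\end{center}
and the theorem reduces to identifying $\|\varTheta_1\otimes 1\|^2$ with $\alpha\,\mathcal{YM}(\nabla_1)$ and $\|1\otimes\varTheta_2\|^2$ with $\beta\,\mathcal{YM}(\nabla_2)$ for positive constants $\alpha,\beta$ as in Definition~\ref{defn of additivity and subadditivity}.

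To compute $\|\varTheta_1\otimes 1\|^2$, I would first use Lemma~\ref{2 form for product system} to observe that $\varTheta_1\otimes 1$ takes all its values in the summand $\Omega_{D_1}^2(\mathcal{A}_1)\otimes\mathcal{A}_2$. Expanding the norm through (\ref{inner-product on curvature space}) over the canonical basis of $\mathcal{A}^{mn}=(\mathcal{A}_1\otimes\mathcal{A}_2)^{mn}$ (with $\mathcal{E}_1=p_1\mathcal{A}_1^m$, $\mathcal{E}_2=p_2\mathcal{A}_2^n$), every summand is of the form $\langle T_1\otimes a_2,T_1\otimes a_2\rangle_{\Omega_D^2}$ with $T_1$ a component of $\varTheta_1$ and $a_2$ an entry of $p_2$. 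By the factorization of the Dixmier trace recorded in Lemma~\ref{induced inner-product}(i), each such term splits as a product of an $\Omega_{D_1}^2(\mathcal{A}_1)$-inner-product and a factor $Tr_\omega(a_2^*a_2|D_2|^{-\ell})$, up to the common positive Gamma-function constant of that lemma. Collecting the first factors reproduces exactly $\mathcal{YM}(\nabla_1)$, while summing the second factors over the entries of $p_2$ contributes the positive scalar $Tr_\omega(\mathrm{tr}(p_2)|D_2|^{-\ell})$; hence $\|\varTheta_1\otimes 1\|^2=\alpha\,\mathcal{YM}(\nabla_1)$ for a fixed positive constant $\alpha$. A symmetric computation gives $\|1\otimes\varTheta_2\|^2=\beta\,\mathcal{YM}(\nabla_2)$, and substituting into the displayed inequality yields subadditivity.

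The step I expect to be the main obstacle is legitimizing the factorization above at the level of the quotient $\Omega_D^2(\mathcal{A})=\pi(\Omega^2(\mathcal{A}))/\pi(dJ_0^1(\mathcal{A}))$ rather than on $\pi(\Omega^2(\mathcal{A}))$ alone, since the inner-product on $\Omega_D^2$ is computed only after applying the orthogonal projection $P$ onto the complement of $\pi(dJ_0^1(\mathcal{A}))$. Here I would invoke Lemma~\ref{denominator of 2nd form}, which shows $\pi(dJ_0^1(\mathcal{A}))=\pi_1(d_1J_0^1(\mathcal{A}_1))\otimes\mathcal{A}_2+\mathcal{A}_1\otimes\pi_2(d_2J_0^1(\mathcal{A}_2))$ sits inside the first summand of Lemma~\ref{numerator of 2nd form} and splits along the two tensor factors, together with Proposition~\ref{an automatic orthogonality}, which forces $P$ to act as the identity on $\Omega_{D_1}^1(\mathcal{A}_1)\otimes\Omega_{D_2}^1(\mathcal{A}_2)$. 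These facts let me identify the restriction of $P$ to $\pi_1(\Omega^2(\mathcal{A}_1))\otimes\mathcal{A}_2$ with $P_1\otimes\mathrm{id}$, where $P_1$ is the projection defining the $\Omega_{D_1}^2(\mathcal{A}_1)$-inner-product, which is exactly what makes the previous paragraph's factorization survive the passage to the quotient.

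Finally, it is worth recording why only subadditivity, rather than additivity, holds in general. Expanding $\langle\langle\varTheta,\varTheta\rangle\rangle$ produces the cross term $2\,\mathrm{Re}\,\langle\langle\varTheta_1\otimes 1,1\otimes\varTheta_2\rangle\rangle$, and the triangle inequality amounts to bounding it by Cauchy--Schwarz. Since the summands $\Omega_{D_1}^2(\mathcal{A}_1)\otimes\mathcal{A}_2$ and $\mathcal{A}_1\otimes\Omega_{D_2}^2(\mathcal{A}_2)$ of $\Omega_D^2(\mathcal{A})$ need not be orthogonal (the remark following Proposition~\ref{an automatic orthogonality}), this cross term is generally nonzero, and its vanishing is precisely the obstruction separating additivity from subadditivity.
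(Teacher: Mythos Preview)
Your proposal is correct and follows essentially the same route as the paper: decompose the curvature as $\varTheta=\varTheta_1\otimes 1+1\otimes\varTheta_2$ via Proposition~\ref{the curvature}, apply the triangle inequality to the norm coming from $\langle\langle\cdot,\cdot\rangle\rangle$, and then factor $\|\varTheta_1\otimes 1\|^2$ and $\|1\otimes\varTheta_2\|^2$ through the Dixmier-trace splitting of Lemma~\ref{induced inner-product} to recover $\alpha\,\mathcal{YM}(\nabla_1)$ and $\beta\,\mathcal{YM}(\nabla_2)$.

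Two minor remarks. First, your constant $\alpha$ comes out as $c\,Tr_\omega(\mathrm{tr}(p_2)|D_2|^{-\ell})$, whereas the paper, evaluating on the ambient basis $\{\sigma_i\otimes\mu_j\}$ of $\mathcal{A}^{mn}$ without reinserting $p_2$, records $\alpha=cn\,Tr_\omega(|D_2|^{-\ell})$; the discrepancy is only in how one interprets $\phi(e_k)$ in (\ref{inner-product on curvature space}) for $\phi$ defined on $p\mathcal{A}^n$, and does not affect the argument. Second, your discussion of the projection $P$ is more explicit than the paper's: the paper simply invokes Lemma~\ref{induced inner-product} and the standing hypothesis (equivalently Lemma~\ref{2 form for product system}) without isolating the issue of passing from $\pi(\Omega^2(\mathcal{A}))$ to the quotient. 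Your identification of the restriction of $P$ with $P_1\otimes\mathrm{id}$ is the right intuition but relies implicitly on the hypothesis; since the paper also works under that hypothesis, this is not a gap.
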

\begin{proof}
Let $\mathcal{E}_1=p_1\mathcal{A}_1^m$ and $\mathcal{E}_2=p_2\mathcal{A}_2^n$ be Hermitian f.g.p modules over $\mathcal{A}_1$ and $\mathcal{A}_2$
respectively, where the Hermitian structures are the induced canonical structure. Then, $\,\mathcal{E}=\mathcal{E}_1\otimes\mathcal{E}_2$ is a
Hermitian f.g.p module over $\mathcal{A}$ where, the Hermitian structure is the induced canonical structure. Moreover, $\mathcal{E}$ is $\,p_1
\mathcal{A}_1^m\otimes p_2\mathcal{A}_2^n\cong(p_1\otimes p_2)\mathcal{A}^{mn}$. Let $\{\sigma_1,\ldots,\sigma_{m}\}$ be the standard basis of
$\mathcal{A}_1^m$ as free module over $\mathcal{A}_1$ and $\{\mu_1,\ldots,\mu_{n}\}$ be that of $\mathcal{A}_2^n$. Then, $\{\sigma_i\otimes\mu_j\}$
is the standard basis of $\mathcal{A}^{mn}$ as free module over $\mathcal{A}$. We assume $(\mathcal{A}_1,\mathcal{H}_1,D_1)$ is a $k$-summable spectral
triple and $(\mathcal{A}_2,\mathcal{H}_2,D_2)$ is a $\ell$-summable spectral triple. Then $(\mathcal{A},\mathcal{H},D)$ is a $(k+\ell)$-summable
spectral triple, and recall from Propn. (\ref{the curvature}) that $\varTheta=\varTheta_1\otimes 1+1\otimes\varTheta_2\,$. Since, both $\varTheta_1
\otimes 1$ and $1\otimes\varTheta_2$ are $\mathcal{A}$-linear maps (Lemma [\ref{component of curvature are linear}]), i,e. they are in
$\mathcal{H}om_\mathcal{A}(\mathcal{E},\mathcal{E}\otimes_\mathcal{A}\Omega_D^2(\mathcal{A}))$, we get the following (using Lemma [\ref{induced inner-product}]),
\begin{eqnarray*}
&  & \sqrt{\mathcal{YM}}(\nabla)\\
& = & \sqrt{\langle\langle\varTheta\,,\,\varTheta\rangle\rangle}\\
& = & ||\varTheta_1\otimes 1+1\otimes\varTheta_2||\\
& \leq & ||\varTheta_1\otimes 1||+||1\otimes\varTheta_2||\\
& = & \sqrt{\sum_{i=1}^m\sum_{j=1}^n\,\langle\varTheta_1(\sigma_i)\otimes\mu_j\,,\,\varTheta_1(\sigma_i)\otimes\mu_j\rangle}+\sqrt{\sum_{i=1}^m
\sum_{j=1}^n\,\langle\sigma_i\otimes\varTheta_2(\mu_j)\,,\,\sigma_i\otimes\varTheta_2(\mu_j)\rangle}\\
& = & \sqrt{\sum_{i=1}^m\,cn\langle\varTheta_1(\sigma_i)\,,\,\varTheta_1(\sigma_i)\rangle Tr_\omega\left(|D_2|^{-\ell}\right)}+\sqrt{\sum_{j=1}^n\,cm
Tr_\omega\left(|D_1|^{-k}\right)\langle\varTheta_2(\mu_j)\,,\,\varTheta_2(\mu_j)\rangle}\\
& = & \sqrt{cnTr_\omega\left(|D_2|^{-\ell}\right)}\sqrt{\mathcal{YM}}(\nabla_1)+\sqrt{cmTr_\omega\left(|D_1|^{-k}\right)}\sqrt{\mathcal{YM}}(\nabla_2)\\
& = & \sqrt{\alpha}\sqrt{\mathcal{YM}}(\nabla_1)+\sqrt{\beta}\sqrt{\mathcal{YM}}(\nabla_2)
\end{eqnarray*}
where,
\begin{align*}
\alpha=cnTr_\omega\left(|D_2|^{-\ell}\right) &= \frac{n\Gamma(\frac{k}{2}+1)\Gamma(\frac{\ell}{2}+1)}{\Gamma\left(\frac{k+\ell}{2}+1\right)}
Tr_\omega\left(|D_2|^{-\ell}\right)\\
\beta=cmTr_\omega\left(|D_1|^{-k}\right) &= \frac{m\Gamma(\frac{k}{2}+1)\Gamma(\frac{\ell}{2}+1)}{\Gamma\left(\frac{k+\ell}{2}+1\right)}
Tr_\omega\left(|D_1|^{-k}\right)
\end{align*}
are two positive real constants. This concludes the proof.
\end{proof}

\begin{remark}\rm
We will reserve the notations for the constants $\alpha$ and $\beta$ throughout the rest of this article.
\end{remark}

\begin{corollary}
The additivity is stronger condition than the subadditivity.
\end{corollary}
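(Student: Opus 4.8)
The plan is to derive the subadditivity inequality directly from the additivity equation, using nothing more than the elementary scalar inequality $\sqrt{a+b}\leq\sqrt{a}+\sqrt{b}$, valid for all non-negative reals $a,b$. Concretely, I would assume that additivity holds, so that for all $\nabla_j\in C(\mathcal{E}_j)$ one has
\[
\mathcal{YM}(\nabla)=\alpha\,\mathcal{YM}(\nabla_1)+\beta\,\mathcal{YM}(\nabla_2),
\]
where $\alpha,\beta>0$ are the positive constants fixed in Theorem \ref{subadditivity}.

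Next I would set $a:=\alpha\,\mathcal{YM}(\nabla_1)$ and $b:=\beta\,\mathcal{YM}(\nabla_2)$. Since $\alpha,\beta>0$ and the Yang--Mills functional is $\mathbb{R}_{\geq 0}$-valued by Definition \ref{YM functional}, both $a$ and $b$ are non-negative. Taking square roots of the additivity equation and invoking the inequality $\sqrt{a+b}\leq\sqrt{a}+\sqrt{b}$, which follows at once from $(\sqrt{a}+\sqrt{b})^2=a+b+2\sqrt{ab}\geq a+b$, then gives
\[
\sqrt{\mathcal{YM}}(\nabla)=\sqrt{\alpha\,\mathcal{YM}(\nabla_1)+\beta\,\mathcal{YM}(\nabla_2)}\leq\sqrt{\alpha}\,\sqrt{\mathcal{YM}}(\nabla_1)+\sqrt{\beta}\,\sqrt{\mathcal{YM}}(\nabla_2),
\]
which is exactly the subadditivity inequality of Definition \ref{defn of additivity and subadditivity}, with the same pair of constants.

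Since $\nabla_1$ and $\nabla_2$ were arbitrary, this shows that additivity forces subadditivity, so additivity is indeed the stronger condition. I expect no genuine obstacle here: the entire content is the non-negativity of the two summands (guaranteed by the positivity of $\alpha,\beta$ together with the range of $\mathcal{YM}$) and the scalar estimate $\sqrt{a+b}\leq\sqrt{a}+\sqrt{b}$. The only point worth emphasizing is that the square-root formulation adopted in Definition \ref{defn of additivity and subadditivity} is precisely what makes the implication transparent, since it is adapted to the $L^2$-type norm underlying the Yang--Mills functional; with a naive (non-square-root) notion of subadditivity the comparison would not be immediate.
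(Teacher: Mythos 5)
Your proposal is correct and is essentially the same argument as the paper's: the paper also assumes the additivity identity $\mathcal{YM}(\nabla)=\alpha\|\varTheta_1\|_{\mathcal{E}_1}^2+\beta\|\varTheta_2\|_{\mathcal{E}_2}^2$ and applies the elementary inequality $\sqrt{a+b}\leq\sqrt{a}+\sqrt{b}$ to the two non-negative summands to recover the subadditivity inequality with the same constants.
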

\begin{proof}
The expression for $\varTheta$ in Propn. (\ref{the curvature}) and additivity of the Yang-Mills implies that
\begin{center}
$\mathcal{YM}(\nabla)=||\varTheta_1\otimes 1+1\otimes\varTheta_2||^2=\alpha||\varTheta_1||_{\mathcal{E}_1}^2+\beta||\varTheta_2||_{\mathcal{E}_2}^2\,.$
\end{center}
Therefore,
\begin{eqnarray*}
\sqrt{\mathcal{YM}}(\nabla) & = & \sqrt{\alpha||\varTheta_1||_{\mathcal{E}_1}^2+\beta||\varTheta_2||_{\mathcal{E}_2}^2}\\
& \leq & \sqrt{\alpha}||\varTheta_1||_{\mathcal{E}_1}+\sqrt{\beta}||\varTheta_2||_{\mathcal{E}_2}\\
& = & \sqrt{\alpha}\sqrt{\mathcal{YM}}(\nabla_1)+\sqrt{\beta}\sqrt{\mathcal{YM}}(\nabla_2)
\end{eqnarray*}
and hence, the additivity implies the subadditivity i,e. it is a stronger condition.
\end{proof}

\begin{proposition}\label{condition for additivity}
A necessary and sufficient condition for additivity of the Yang-Mills action functional is the following
\begin{center}
$Re\left(\left(\sum_{i=1}^m\overline{Tr_\omega\left((\varTheta_1(\sigma_i))_i|D_1|^{-k}\right)}\right)\left(\sum_{j=1}^nTr_\omega
\left((\varTheta_2(\mu_j))_j|D_2|^{-\ell}\right)\right)\right)=0$
\end{center}
where, $Tr_\omega$ denotes the Dixmier trace. Here, $\,Tr_\omega\left((\varTheta_1(\sigma_i))_i|D_1|^{-k}\right)$ means $\,Tr_\omega
\left(P_1v_i|D_1|^{-k}\right)$, where $P_1$ is the orthogonal projection onto the orthogonal complement of $\,\pi_1(d_1J_0^1(\mathcal{A}_1))$, and
$(\varTheta_1(\sigma_i))_i=[v_i]\in\Omega_{D_1}^2(\mathcal{A}_1)$ for $v_i\in\pi_1(\Omega^2(\mathcal{A}_1))$. Similar meaning for $\,Tr_\omega
\left((\varTheta_2(\mu_j))_i|D_2|^{-l}\right)$.
\end{proposition}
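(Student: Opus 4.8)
The plan is to read off additivity directly from the computation already carried out in the proof of Theorem~\ref{subadditivity}. By Proposition~\ref{the curvature} we have $\varTheta=\varTheta_1\otimes 1+1\otimes\varTheta_2$, and by Lemma~\ref{component of curvature are linear} both summands lie in $\mathcal{H}om_\mathcal{A}(\mathcal{E},\mathcal{E}\otimes_\mathcal{A}\Omega_D^2(\mathcal{A}))$. Expanding the inner product (\ref{inner-product on curvature space}) bilinearly gives $\mathcal{YM}(\nabla)=\langle\langle\varTheta_1\otimes 1,\varTheta_1\otimes 1\rangle\rangle+\langle\langle 1\otimes\varTheta_2,1\otimes\varTheta_2\rangle\rangle+2\,\mathrm{Re}\,\langle\langle\varTheta_1\otimes 1,1\otimes\varTheta_2\rangle\rangle$, and the proof of Theorem~\ref{subadditivity} identifies the first two terms with $\alpha\,\mathcal{YM}(\nabla_1)$ and $\beta\,\mathcal{YM}(\nabla_2)$. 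Hence, per Definition~\ref{defn of additivity and subadditivity}, additivity holds for all choices of $\nabla_1,\nabla_2$ if and only if $\mathrm{Re}\,\langle\langle\varTheta_1\otimes 1,1\otimes\varTheta_2\rangle\rangle=0$. The entire task therefore reduces to evaluating this cross term.

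To compute it I would unfold (\ref{inner-product on curvature space}) over the standard basis $\{\sigma_i\otimes\mu_j\}$ of the free module $\mathcal{A}^{mn}$ and over its components. Since $(\varTheta_1\otimes 1)(\sigma_i\otimes\mu_j)=\varTheta_1(\sigma_i)\otimes\mu_j$ and $(1\otimes\varTheta_2)(\sigma_i\otimes\mu_j)=\sigma_i\otimes\varTheta_2(\mu_j)$, their $(a,b)$-components are $(\varTheta_1(\sigma_i))_a\otimes\delta_{jb}$ and $\delta_{ia}\otimes(\varTheta_2(\mu_j))_b$. The Kronecker deltas coming from the standard basis force $a=i$ and $b=j$, so the double sum collapses to $\langle\langle\varTheta_1\otimes 1,1\otimes\varTheta_2\rangle\rangle=\sum_{i=1}^m\sum_{j=1}^n\langle\,(\varTheta_1(\sigma_i))_i\otimes 1\,,\,1\otimes(\varTheta_2(\mu_j))_j\,\rangle_{\Omega_D^2}$. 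Writing $(\varTheta_1(\sigma_i))_i=[v_i]$, $(\varTheta_2(\mu_j))_j=[w_j]$ and invoking the factorization of the Dixmier trace recorded in Lemma~\ref{induced inner-product}, each summand should reduce to a fixed positive multiple of $\overline{Tr_\omega(P_1v_i|D_1|^{-k})}\,Tr_\omega(P_2w_j|D_2|^{-\ell})$. The double sum then separates as the product of the two single sums, and after dropping the common positive constant the vanishing of its real part is exactly the asserted condition; note that this handles both implications of the "necessary and sufficient" claim at once.

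The step I expect to be the main obstacle is the passage to the factorized form, because it requires a careful treatment of the orthogonal projection $P$ onto the orthogonal complement of $\pi(dJ_0^1(\mathcal{A}))$ used to define $\langle\,\cdot\,,\cdot\,\rangle_{\Omega_D^2}$. Concretely one has $\langle[v_i\otimes 1],[1\otimes w_j]\rangle_{\Omega_D^2}=\langle P(v_i\otimes 1),P(1\otimes w_j)\rangle$ with respect to (\ref{inner-product by Dixmier trace}), and since $v_i-P_1v_i\in\pi_1(d_1J_0^1(\mathcal{A}_1))$ and $w_j-P_2w_j\in\pi_2(d_2J_0^1(\mathcal{A}_2))$, Lemma~\ref{denominator of 2nd form} lets one replace the lifts by $(P_1v_i)\otimes 1$ and $1\otimes(P_2w_j)$. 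The delicate point is that the first algebraic direct sum in Lemma~\ref{2 form for product system}, namely $\Omega_{D_1}^2(\mathcal{A}_1)\otimes\mathcal{A}_2\oplus\mathcal{A}_1\otimes\Omega_{D_2}^2(\mathcal{A}_2)$, is genuinely non-orthogonal (Remark~\ref{a counter example}), so one cannot simply assert that these reduced lifts are fixed by $P$. I would control the residual junk contribution by combining the orthogonality $P_1v_i\perp\pi_1(d_1J_0^1(\mathcal{A}_1))$ and $P_2w_j\perp\pi_2(d_2J_0^1(\mathcal{A}_2))$ with the tensor splitting of Lemma~\ref{induced inner-product} and the automatic orthogonality of $\Omega_{D_1}^1(\mathcal{A}_1)\otimes\Omega_{D_2}^1(\mathcal{A}_2)$ established in Proposition~\ref{an automatic orthogonality}, so that only the pairing $\langle(P_1v_i)\otimes 1,1\otimes(P_2w_j)\rangle$ effectively survives and evaluates by the factorization of the Dixmier trace. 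Securing this cancellation is the technical heart of the argument.
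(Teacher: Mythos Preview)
Your proposal is correct and follows essentially the same route as the paper: expand $\langle\langle\varTheta,\varTheta\rangle\rangle$ using $\varTheta=\varTheta_1\otimes 1+1\otimes\varTheta_2$, identify the diagonal pieces with $\alpha\,\mathcal{YM}(\nabla_1)$ and $\beta\,\mathcal{YM}(\nabla_2)$ via the computation in Theorem~\ref{subadditivity}, and reduce additivity to the vanishing of the real part of the cross term, which is then evaluated componentwise and factorised through Lemma~\ref{induced inner-product}. The Kronecker-delta collapse you describe is exactly how the paper passes from $\langle\varTheta_1(\sigma_i)\otimes\mu_j,\sigma_i\otimes\varTheta_2(\mu_j)\rangle$ to an expression involving only the $(i,i)$ and $(j,j)$ coordinates.

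The one substantive difference is emphasis rather than method: you isolate the interaction of the global projection $P$ with the non-orthogonal decomposition $\Omega_{D_1}^2(\mathcal{A}_1)\otimes\mathcal{A}_2\oplus\mathcal{A}_1\otimes\Omega_{D_2}^2(\mathcal{A}_2)$ as the technical crux, and propose to control it by replacing the lifts with $(P_1v_i)\otimes 1$ and $1\otimes(P_2w_j)$ and using the tensor splitting of the Dixmier trace together with Proposition~\ref{an automatic orthogonality}. The paper does not make this step explicit; it simply invokes Lemma~\ref{induced inner-product}, writes the factorised form, and then \emph{interprets} the resulting numbers as $Tr_\omega(P_1v_i|D_1|^{-k})$ and $Tr_\omega(P_2w_j|D_2|^{-\ell})$ after the fact. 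Your more careful handling of $P$ is a genuine clarification of a point the paper leaves implicit, but it does not change the argument's architecture.
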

\begin{proof}
Let $\mathcal{E}_1=p_1\mathcal{A}_1^m$ and $\mathcal{E}_2=p_2\mathcal{A}_2^n$ be Hermitian f.g.p modules over $\mathcal{A}_1$ and $\mathcal{A}_2$
respectively, where the Hermitian structures are the induced canonical structure, and assume that $(\mathcal{A}_1,\mathcal{H}_1,D_1)$ is a $k$-summable
spectral triple and $(\mathcal{A}_2,\mathcal{H}_2,D_2)$ is a $\ell$-summable spectral triple. From the proof of Thm. (\ref{subadditivity}), using
Lemma (\ref{induced inner-product}), we get
\begin{eqnarray*}
\mathcal{YM}(\nabla) & = & \alpha \mathcal{YM}(\nabla_1)+\beta \mathcal{YM}(\nabla_2)\\
&  & +\sum_{i=1}^m\sum_{j=1}^n\,\langle\varTheta_1(\sigma_i)\otimes\mu_j\,,\,\sigma_i\otimes\varTheta_2(\mu_j)\rangle
+\langle\sigma_i\otimes\varTheta_2(\mu_j)\,,\,\varTheta_1(\sigma_i)\otimes\mu_j\rangle\\
& = & \alpha \mathcal{YM}(\nabla_1)+\beta \mathcal{YM}(\nabla_2)\\
&  & +\sum_{i=1}^m\sum_{j=1}^n\,\langle\varTheta_1(\sigma_i),\sigma_i\rangle\langle\mu_j,\varTheta_2(\mu_j)\rangle+
\langle\sigma_i,\varTheta_1(\sigma_i)\rangle\langle\varTheta_2(\mu_j),\mu_j\rangle\\
& = & \alpha \mathcal{YM}(\nabla_1)+\beta \mathcal{YM}(\nabla_2)\\
&  & +\sum_{i=1}^m\sum_{j=1}^n\,2Re\left(\overline{\langle\sigma_i,\varTheta_1(\sigma_i)\rangle}\langle\mu_j,\varTheta_2(\mu_j)\rangle\right)\\
& = & \alpha \mathcal{YM}(\nabla_1)+\beta \mathcal{YM}(\nabla_2)\\
&  & +\sum_{i=1}^m\sum_{j=1}^n\,2Re\left(\overline{Tr_\omega((\varTheta_1(\sigma_i))_i|D_1|^{-k})}\,Tr_\omega\left((\varTheta_2(\mu_j))_j|D_2|^{-\ell}\right)\right)\\
& = & \alpha \mathcal{YM}(\nabla_1)+\beta \mathcal{YM}(\nabla_2)\\
&  & +\,2Re\left(\left(\sum_{i=1}^m\overline{Tr_\omega\left((\varTheta_1(\sigma_i))_i|D_1|^{-k}\right)}\right)\left(\sum_{j=1}^n
Tr_\omega\left((\varTheta_2(\mu_j))_j|D_2|^{-\ell}\right)\right)\right)
\end{eqnarray*}
Here, $(\varTheta_1(\sigma_i))_i$ is the $i$-th co-ordinate of $\varTheta_1(\sigma_i)\in\left(\Omega_{D_1}^2(\mathcal{A}_1)\right)^m$ and
$(\varTheta_2(\mu_j))_j$ is the $j$-th co-ordinate of $\varTheta_2(\mu_j)\in\left(\Omega_{D_2}^2(\mathcal{A}_2)\right)^n$. This is because
for $j=1,2$, range of $\varTheta_j$ lies in $\,p_j\mathcal{A}_j^{k_j}\otimes_{\mathcal{A}_j}\Omega_{D_j}^2(\mathcal{A}_j)$ which is contained in
$\left(\Omega_{D_j}^2(\mathcal{A}_j)\right)^{k_j}$, with $k_j=m$ if $j=1$ and $k_j=n$ if $j=2$. The meaning of the complex number $Tr_\omega
\left((\varTheta_1(\sigma_i))_i|D_1|^{-k}\right)$ is then clear. Choose any representative $v_i\in\pi_1\left(\Omega^2(\mathcal{A}_1)\right)$
of $(\varTheta_1(\sigma_i))_i\in\Omega_{D_1}^2(\mathcal{A}_1)$. Then, $Tr_\omega\left((\varTheta_1(\sigma_i))_i|D_1|^{-k}\right)$ means the complex
number $Tr_\omega\left(P_1v_i|D_1|^{-k}\right)$, where $P_1$ is the orthogonal projection onto the orthogonal complement of $\pi_1(d_1J_0^1
(\mathcal{A}_1))$. Similar meaning for $Tr_\omega\left((\varTheta_2(\mu_j))_j|D_2|^{-l}\right)$. Let $\,\xi,\eta\,$ denote the following complex
numbers
\begin{align*}
\xi &= \sum_{i=1}^mTr_\omega\left((\varTheta_1(\sigma_i))_i|D_1|^{-k}\right)\,,\\
\eta &= \sum_{j=1}^nTr_\omega\left((\varTheta_2(\mu_j))_j|D_2|^{-\ell}\right)\,.
\end{align*}
Hence, $\,Re\left(\overline\xi\eta\right)=0$ is a necessary and sufficient condition for additivity of the Yang-Mills functional, and this
condition depends only on the individual spectral triples.
\end{proof}

An instance of additivity of the Yang-Mills functional is shown in the next section. If the Yang-Mills functional becomes additive then it is
natural to ask when critical points (Def. [\ref{critical point}]) on the individual spectral triples give rise to a critical point on the product
spectral triple. We obtain a necessary and sufficient condition for this.

\begin{proposition}\label{necessary condition}
A necessary condition for $\nabla$ to be a critical point for the Yang-Mills functional under additivity is that both $\nabla_1,\nabla_2$ must be
critical points for the Yang-Mills functional on the individual spectral triple.
\end{proposition}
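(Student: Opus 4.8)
The plan is to exploit the fact that additivity is assumed to hold for \emph{all} pairs of compatible connections, not merely for the given pair $(\nabla_1,\nabla_2)$. The key observation is that a perturbation of $\nabla_1$ alone induces a perturbation of the product connection $\nabla$, so that the additivity relation can be used to transfer criticality from $\nabla$ down to $\nabla_1$ (and symmetrically to $\nabla_2$).

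More precisely, fix any $\mu_1\in\mathcal{H}om_{\mathcal{A}_1}(\mathcal{E}_1,\mathcal{E}_1\otimes_{\mathcal{A}_1}\Omega_{D_1}^1(\mathcal{A}_1))$. Using the decomposition of Lemma (\ref{first form}) together with the canonical identification of Lemma (\ref{isomorphism involving product module}), the map $\mu_1\otimes 1$ defines an element of $\mathcal{H}om_\mathcal{A}(\mathcal{E},\mathcal{E}\otimes_\mathcal{A}\Omega_D^1(\mathcal{A}))$; indeed $\mu_1$ is $\mathcal{A}_1$-linear and the identity on $\mathcal{E}_2$ is $\mathcal{A}_2$-linear, so $\mu_1\otimes 1$ is $\mathcal{A}$-linear. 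First I would record the elementary identity
\begin{center}
$(\nabla_1+t\mu_1)\otimes 1+1\otimes\nabla_2=\nabla+t(\mu_1\otimes 1)$,
\end{center}
which is immediate from the defining formula for the product connection in Propn. (\ref{the connection}). Thus the product connection built from the perturbed pair $(\nabla_1+t\mu_1,\nabla_2)$ is precisely the linear perturbation $\nabla+t(\mu_1\otimes 1)$ of $\nabla$.

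Next I would invoke additivity. Since the additivity relation is assumed for every pair of compatible connections, it applies to $(\nabla_1+t\mu_1,\nabla_2)$, giving
\begin{center}
$\mathcal{YM}(\nabla+t(\mu_1\otimes 1))=\alpha\,\mathcal{YM}(\nabla_1+t\mu_1)+\beta\,\mathcal{YM}(\nabla_2)$
\end{center}
for all $t$. Differentiating at $t=0$, using that $\nabla$ is assumed to be a critical point so the left-hand side has vanishing derivative, and noting that the last term is constant in $t$, yields $\alpha\,\frac{d}{dt}|_{t=0}\mathcal{YM}(\nabla_1+t\mu_1)=0$. As $\alpha>0$, this forces $\frac{d}{dt}|_{t=0}\mathcal{YM}(\nabla_1+t\mu_1)=0$, and since $\mu_1$ was arbitrary, $\nabla_1$ is a critical point by Def. (\ref{critical point}). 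The identical argument with $1\otimes\mu_2$ in place of $\mu_1\otimes 1$, for arbitrary $\mu_2\in\mathcal{H}om_{\mathcal{A}_2}(\mathcal{E}_2,\mathcal{E}_2\otimes_{\mathcal{A}_2}\Omega_{D_2}^1(\mathcal{A}_2))$, shows that $\nabla_2$ is likewise a critical point.

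The argument is essentially formal once the bookkeeping is in place, so I expect no serious analytic obstacle. The one point that requires care is the verification that $\mu_1\otimes 1$ (and $1\otimes\mu_2$) genuinely land in the correct space $\mathcal{H}om_\mathcal{A}(\mathcal{E},\mathcal{E}\otimes_\mathcal{A}\Omega_D^1(\mathcal{A}))$ under the canonical identifications of Lemmas (\ref{first form}) and (\ref{isomorphism involving product module}); this is where one uses that $\mathcal{A}_1,\mathcal{A}_2$ are unital and that a perturbation supported on a single tensor factor remains a module map over the full algebra $\mathcal{A}$.
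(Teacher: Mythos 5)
Your argument is correct and is essentially the paper's own proof: both apply the additivity identity to the perturbed pair of connections, use the observation that $(\nabla_1+t\mu_1)\otimes 1+1\otimes\nabla_2$ is exactly the linear perturbation $\nabla+t(\mu_1\otimes 1)$, and differentiate at $t=0$. Your choice to perturb one tensor factor at a time (i.e.\ taking $\mu_2=0$ and then $\mu_1=0$) makes the last step slightly cleaner than the paper's simultaneous perturbation $\mu=\mu_1\otimes 1+1\otimes\mu_2$, but it is the same route.
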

\begin{proof}
Choose $\mu_j\in\mathcal{H}om_{\mathcal{A}_j}(\mathcal{E}_j,\mathcal{E}_j\otimes_{\mathcal{A}_j}\Omega_{D_j}^1(\mathcal{A}_j))$ for $j=1,2$. Define
$\mu=\mu_1\otimes 1+1\otimes\mu_2$. Then $\,\mu\in\mathcal{H}om_\mathcal{A}(\mathcal{E},\mathcal{E}\otimes_\mathcal{A}\Omega_D^1(\mathcal{A}))$. If
$\nabla$ is a critical point for the Yang-Mills functional then, we have
\begin{eqnarray*}
0 & = & \frac{d}{dt}|_{t=0}\,\mathcal{YM}(\nabla+t\mu)\\
& = & \frac{d}{dt}|_{t=0}\,\mathcal{YM}(\nabla_1\otimes 1+1\otimes\nabla_2+t(\mu_1\otimes 1+1\otimes\mu_2))\\
& = & \frac{d}{dt}|_{t=0}\,\mathcal{YM}((\nabla_1\otimes 1+t\mu_1\otimes 1)+(1\otimes\nabla_2+t\otimes\mu_2))\\
& = & \frac{d}{dt}|_{t=0}\,\mathcal{YM}((\nabla_1+t\mu_1)\otimes 1+1\otimes(\nabla_2+t\mu_2))\\
& = & \frac{d}{dt}|_{t=0}\,\alpha \mathcal{YM}(\nabla_1+t\mu_1)+\frac{d}{dt}|_{t=0}\,\beta \mathcal{YM}(\nabla_2+t\mu_2)\\
& = & \alpha\frac{d}{dt}|_{t=0}\,\mathcal{YM}(\nabla_1+t\mu_1)+\beta\frac{d}{dt}|_{t=0}\,\mathcal{YM}(\nabla_2+t\mu_2)
\end{eqnarray*}
(by Th. [\ref{subadditivity}]). Since, the range of $\mathcal{YM}$ is $\mathbb{R}_{\geq 0}$ and $\alpha,\beta$ are positive real constants, we see
that $\nabla_1,\nabla_2$ both are critical points for the Yang-Mills functional.
\end{proof}

Recall from Def. (\ref{critical point}) that a connection $\nabla$ on a Hermitian f.g.p module $\mathcal{E}$ is a critical point for the Yang-Mills
functional if and only if $\,\langle\langle[\nabla,\mu]\,,\,\varTheta\rangle\rangle=0,\,\,\forall\,\mu\in\mathcal{H}om_\mathcal{A}(\mathcal{E},\mathcal{E}
\otimes_\mathcal{A}\Omega_D^1(\mathcal{A}))$. Here, $[\nabla,\mu]=\widetilde{\nabla}\circ\mu+(1\otimes\Pi)\circ(\mu\otimes 1)\circ\nabla$. In our
situation, $\mathcal{A}=\mathcal{A}_1\otimes\mathcal{A}_2$ and $\mathcal{E}=\mathcal{E}_1\otimes\mathcal{E}_2$. Using Lemma (\ref{first form}),
any $\,\mu\in\mathcal{H}om_\mathcal{A}(\mathcal{E},\mathcal{E}\otimes_\mathcal{A}\Omega_D^1(\mathcal{A}))$ can be written as $\,\mu=\mu_1\oplus\mu_2$ where,
\begin{align*}
\mu_1=Pr_1\circ\mu:\mathcal{E}_1\otimes\mathcal{E}_2 &\longrightarrow \left(\mathcal{E}_1\otimes_{\mathcal{A}_1}\Omega_{D_1}^1(\mathcal{A}_1)\right)
\otimes\mathcal{E}_2\,,\\
\mu_2=Pr_2\circ\mu:\mathcal{E}_1\otimes\mathcal{E}_2 &\longrightarrow \mathcal{E}_1\otimes\left(\mathcal{E}_2\otimes_{\mathcal{A}_2}\Omega_{D_2}^1
(\mathcal{A}_2)\right)\,.
\end{align*}
For $\nabla_j\in C(\mathcal{E}_j),\,j=1,2,$ we define
\begin{align*}
[\nabla_1\otimes 1,\mu_1] &:= (\widetilde{\nabla_1}\otimes 1)\circ\mu_1+(1\otimes\Pi)\circ(\mu_1\otimes 1)\circ(\nabla_1\otimes 1)\,,\\
[1\otimes\nabla_2,\mu_2] &:= (1\otimes\widetilde{\nabla_2})\circ\mu_2+(1\otimes\Pi)\circ(\mu_2\otimes 1)\circ(1\otimes\nabla_2)\,.
\end{align*}

\begin{lemma}
We have
\begin{align*}
[\nabla_1\otimes 1,\mu_1] &\in \mathcal{H}om_\mathcal{A}\left(\mathcal{E}_1\otimes\mathcal{E}_2\,,\,(\mathcal{E}_1
\otimes_{\mathcal{A}_1}\Omega_{D_1}^2(\mathcal{A}_1))\otimes_{\mathcal{A}_2}\mathcal{E}_2\right)\\
[1\otimes\nabla_2,\mu_2] &\in \mathcal{H}om_\mathcal{A}\left(\mathcal{E}_1\otimes\mathcal{E}_2\,,\,\mathcal{E}_1
\otimes_{\mathcal{A}_1}(\mathcal{E}_2\otimes_{\mathcal{A}_2}\Omega_{D_2}^2(\mathcal{A}_2))\right)
\end{align*}
i,e. both are elements of $\,\mathcal{H}om_\mathcal{A}(\mathcal{E},\mathcal{E}\otimes_\mathcal{A}\Omega_D^2(\mathcal{A}))$.
\end{lemma}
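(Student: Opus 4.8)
The statement asks for two things: that each of $[\nabla_1\otimes 1,\mu_1]$ and $[1\otimes\nabla_2,\mu_2]$ is $\mathcal{A}$-linear, and that its image lies in the indicated summand of $\mathcal{E}\otimes_\mathcal{A}\Omega_D^2(\mathcal{A})$. The natural temptation is to invoke the general fact, recalled before Def. (\ref{critical point}), that $[\nabla,\mu]$ is $\mathcal{A}$-linear for a genuine connection $\nabla$; but as already observed, $\nabla_1\otimes 1$ and $1\otimes\nabla_2$ are not connections on $\mathcal{E}$, so one cannot quote this directly. The plan is instead to isolate the exact Leibniz behaviour of $\nabla_1\otimes 1$ and then rerun the cancellation that underlies the general fact.

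First I would record two partial Leibniz rules. Writing $d^{(1)}(a_1\otimes a_2):=d_1(a_1)\otimes a_2$ and $d^{(2)}(a_1\otimes a_2):=a_1\otimes d_2(a_2)$, so that $d=d^{(1)}+d^{(2)}$ by Lemma (\ref{first differential}), the Leibniz rule for $\nabla_1$ over $\mathcal{A}_1$ gives, for $\xi\in\mathcal{E}$ and $a=a_1\otimes a_2$,
\[
(\nabla_1\otimes 1)(\xi a)=\big((\nabla_1\otimes 1)\xi\big)a+\xi\otimes d^{(1)}a,
\]
and the extended Leibniz rule for $\widetilde{\nabla_1}$ yields
\[
(\widetilde{\nabla_1}\otimes 1)(\eta a)=\big((\widetilde{\nabla_1}\otimes 1)\eta\big)a-\eta\cdot d^{(1)}a
\]
for $\eta$ in the first summand $(\mathcal{E}_1\otimes_{\mathcal{A}_1}\Omega_{D_1}^1(\mathcal{A}_1))\otimes\mathcal{E}_2$; here the product $\eta\cdot d^{(1)}a$ is computed in $\Omega_D^2(\mathcal{A})$ and, since both factors are of first type, lands in $\Omega_{D_1}^2(\mathcal{A}_1)\otimes\mathcal{A}_2$ by Lemma (\ref{product map})(i). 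Both identities reduce to the one-variable rules once one tracks which tensor slot the $\mathcal{A}_1$- and $\mathcal{A}_2$-components act on, using Lemma (\ref{first form}).

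With these in hand I would expand $[\nabla_1\otimes 1,\mu_1](\xi a)$ directly. Using $\mathcal{A}$-linearity of $\mu_1$ in the first term and the $\nabla_1$-Leibniz rule in the second, the expression splits into $\big([\nabla_1\otimes 1,\mu_1]\xi\big)a$ plus two correction terms: one equal to $-\,\mu_1(\xi)\cdot d^{(1)}a$ coming from the minus sign in the $\widetilde{\nabla_1}\otimes 1$ Leibniz rule, and one equal to $+\,(1\otimes\Pi)(\mu_1\otimes 1)(\xi\otimes d^{(1)}a)=\mu_1(\xi)\cdot d^{(1)}a$ coming from the $\xi\otimes d^{(1)}a$ piece of $(\nabla_1\otimes 1)(\xi a)$. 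The $\mathcal{A}$-bilinearity of the product map $\Pi$ is what lets one pull the trailing $a$ out of the first correction term and recognise the two as negatives of each other; they cancel, leaving $[\nabla_1\otimes 1,\mu_1](\xi a)=\big([\nabla_1\otimes 1,\mu_1]\xi\big)a$. The argument for $[1\otimes\nabla_2,\mu_2]$ is identical after replacing $d^{(1)}$ by $d^{(2)}$ and Lemma (\ref{product map})(i) by (ii).

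Finally, for the target module: since $\mu_1$ takes values in $(\mathcal{E}_1\otimes_{\mathcal{A}_1}\Omega_{D_1}^1(\mathcal{A}_1))\otimes\mathcal{E}_2$ and $\nabla_1\otimes 1$ produces only first-type one-forms, every application of $\Pi$ occurring in $[\nabla_1\otimes 1,\mu_1]$ is a product of two first-type forms and hence stays in the $\Omega_{D_1}^2(\mathcal{A}_1)\otimes\mathcal{A}_2$ component by Lemma (\ref{product map})(i), as does the $\widetilde{\nabla_1}\otimes 1$ term. By the decomposition of $\Omega_D^2(\mathcal{A})$ in Lemma (\ref{2 form for product system}) together with the tensor identifications of Lemma (\ref{isomorphism involving product module}), this component is exactly $(\mathcal{E}_1\otimes_{\mathcal{A}_1}\Omega_{D_1}^2(\mathcal{A}_1))\otimes_{\mathcal{A}_2}\mathcal{E}_2\subseteq\mathcal{E}\otimes_\mathcal{A}\Omega_D^2(\mathcal{A})$, and symmetrically for the second map. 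I expect the genuine subtlety to be the sign-cancellation of the third paragraph — getting the Leibniz sign for $\widetilde{\nabla_1}\otimes 1$ and the bimodule behaviour of $\Pi$ exactly right — while the codomain bookkeeping is routine.
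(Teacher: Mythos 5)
Your proposal is correct and follows essentially the same route as the paper: both expand the two terms of $[\nabla_1\otimes 1,\mu_1]$ on $\xi a$ via the Leibniz rules for $\nabla_1\otimes 1$ and $\widetilde{\nabla_1}\otimes 1$ and observe that the two correction terms $\mp\,\mu_1(\xi)\cdot d^{(1)}a$ (the paper's $((\xi_{1j}\otimes\xi_{2j})\otimes(\omega_{1j}d_1a\otimes 1))(1\otimes b)$ terms) cancel, leaving $\mathcal{A}$-linearity. The only difference is presentational — you package the computation into two abstract partial Leibniz rules where the paper writes out the coordinates $\sum_j\xi_{1j}\otimes\omega_{1j}\otimes\xi_{2j}$ explicitly — and your closing remarks on the codomain make explicit what the paper leaves implicit.
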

\begin{proof}
For $\xi_1\in\mathcal{E}_1$ and $\xi_2\in\mathcal{E}_2$, let
\begin{align*}
\mu_1(\xi_1\otimes\xi_2) &= \sum_j\xi_{1j}\otimes\omega_{1j}\otimes\xi_{2j}\in\left(\mathcal{E}_1\otimes_{\mathcal{A}_1}
\Omega_{D_1}^1\right)\otimes\mathcal{E}_2\\
\nabla_1(\xi_1) &= \sum_j\widetilde{\xi_{1j}}\otimes\widetilde{\omega_{1j}}\in\mathcal{E}_1\otimes_{\mathcal{A}_1}\Omega_{D_1}^1\,.
\end{align*}
Then,
\begin{eqnarray*}
&  & (\widetilde{\nabla_1}\otimes 1)\circ\mu_1(\xi_1a\otimes\xi_2b)\\
& = & \sum_j(\widetilde{\nabla_1}\otimes 1)\left((\xi_{1j}\otimes\omega_{1j}\otimes\xi_{2j})(a\otimes b)\right)\\
& = & \sum_j(\widetilde{\nabla_1}\otimes 1)(\xi_{1j}\otimes\omega_{1j}a\otimes\xi_{2j}b)\\
& = & \sum_j(\nabla_1(\xi_{1j})\omega_{1j}a+\xi_{1j}\otimes d_1(\omega_{1j}a))\otimes\xi_{2j}b\\
& = & \sum_j(\nabla_1(\xi_{1j})\omega_{1j}\otimes\xi_{2j})(a\otimes b)+((\xi_{1j}\otimes\xi_{2j})\otimes(d_1(\omega_{1j})\otimes 1)) (a\otimes b)\\
&   & \quad\,\,-((\xi_{1j}\otimes\xi_{2j})\otimes(\omega_{1j}d_1a\otimes 1))(1\otimes b)
\end{eqnarray*}
and
\begin{eqnarray*}
&  & (1\otimes\Pi)\circ(\mu_1\otimes 1)\circ(\nabla_1\otimes 1)(\xi_1a\otimes\xi_2b)\\
& = & (1\otimes\Pi)\circ(\mu_1\otimes 1)((\nabla_1(\xi_1)a+\xi_1\otimes d_1a)\otimes\xi_2b)\\
& = & (1\otimes\Pi)\circ(\mu_1\otimes 1)\left(\left(\sum_j(\widetilde{\xi_{1j}}\otimes\xi_2)\otimes(\widetilde{\omega_{1j}}\otimes 1)\right)(a\otimes b)+
((\xi_1\otimes\xi_2)\otimes(d_1a\otimes 1))(1\otimes b)\right)\\
& = &  \sum_j(1\otimes\Pi)\circ(\mu_1\otimes 1)\left(\left((\widetilde{\xi_{1j}}\otimes\xi_2)\otimes(\widetilde{\omega_{1j}}\otimes 1)\right)\right)(a\otimes b)\\
&  & \quad+(1\otimes\Pi)((\xi_{1j}\otimes\omega_{1j}\otimes\xi_{2j})\otimes d_1a)(1\otimes b)\\
& = & \sum_j(1\otimes\Pi)\circ(\mu_1\otimes 1)\left((\widetilde{\xi_{1j}}\otimes\xi_2)\otimes(\widetilde{\omega_{1j}}\otimes 1)\right)(a\otimes b)
+((\xi_{1j}\otimes\xi_{2j})\otimes(\omega_{1j}d_1a\otimes 1))(1\otimes b)
\end{eqnarray*}
Adding these two we see that $[\nabla_1\otimes 1,\mu_1]$ is $\mathcal{A}$-linear. Similarly, one can show for $[1\otimes\nabla_2,\mu_2]$.
\end{proof}

\begin{proposition}\label{sufficient condition}
Given two spectral triples $(\mathcal{A}_j,\mathcal{H}_j,D_j,\gamma_j),\,j=1,2$, and Hermitian f.g.p modules $\mathcal{E}_j$ over $\mathcal{A}_j$,
if $\,\nabla_j\in C(\mathcal{E}_j)$ satisfy the following equation
\begin{center}
$\langle\langle\,[\nabla_1\otimes 1,\mu_1]\,,\,\varTheta_1\otimes 1\rangle\rangle+\langle\langle\,[1\otimes\nabla_2,\mu_2]\,,\,1\otimes\varTheta_2
\rangle\rangle=0$
\end{center}
for all $\mu\in\mathcal{H}om_\mathcal{A}(\mathcal{E},\mathcal{E}\otimes_\mathcal{A}\Omega_D^1(\mathcal{A}))$, then $\nabla$ is a critical point for
the Yang-Mills functional on product spectral triple. Moreover, the converse is also true.
\end{proposition}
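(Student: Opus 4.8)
The plan is to prove the displayed equation is equivalent to criticality of $\nabla$ by establishing, for every $\mu=\mu_1\oplus\mu_2$ split via Lemma \ref{first form}, the identity
\[
\langle\langle[\nabla,\mu],\varTheta\rangle\rangle=\langle\langle[\nabla_1\otimes1,\mu_1],\varTheta_1\otimes1\rangle\rangle+\langle\langle[1\otimes\nabla_2,\mu_2],1\otimes\varTheta_2\rangle\rangle.
\]
Granting this, both the asserted implication and its converse are immediate: by the discussion after Definition \ref{critical point}, $\nabla$ is critical exactly when the left-hand side vanishes for all $\mu$, and since $\mu_1=Pr_1\circ\mu$ and $\mu_2=Pr_2\circ\mu$ range independently, this is precisely the stated condition. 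Recall also $\varTheta=\varTheta_1\otimes1+1\otimes\varTheta_2$ from Proposition \ref{the curvature}.

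First I would expand $[\nabla,\mu]=\widetilde\nabla\circ\mu+(1\otimes\Pi)\circ(\mu\otimes1)\circ\nabla$ with $\nabla=\nabla_1\otimes1+1\otimes\nabla_2$, and sort the resulting terms according to the three summands of $\Omega_D^2(\mathcal{A})$ in Lemma \ref{2 form for product system}. Using Lemma \ref{2nd differential} for the action of $\widetilde\nabla$ and the four cases of Lemma \ref{product map} for $\Pi$, the part of $[\nabla,\mu]$ lying in $\Omega_{D_1}^2(\mathcal{A}_1)\otimes\mathcal{A}_2$ assembles exactly into $[\nabla_1\otimes1,\mu_1]$ (the first-with-first product, part (i), together with the $d_1$-part of the differential), the part in $\mathcal{A}_1\otimes\Omega_{D_2}^2(\mathcal{A}_2)$ assembles into $[1\otimes\nabla_2,\mu_2]$ (part (ii) and the $d_2$-part), while every mixed contribution---the $\Pi$-products of types (iii) and (iv) and the off-diagonal differentials $\omega_1\otimes d_2a_2$ and $-d_1a_1\otimes\omega_2$---falls into the third summand $\Omega_{D_1}^1(\mathcal{A}_1)\otimes\Omega_{D_2}^1(\mathcal{A}_2)$. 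Writing $R$ for this remainder, I get $[\nabla,\mu]=[\nabla_1\otimes1,\mu_1]+[1\otimes\nabla_2,\mu_2]+R$, the first two summands being $\mathcal{A}$-linear by the preceding lemma.

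Now I would pair against $\varTheta$. Because $\varTheta_1\otimes1$ and $1\otimes\varTheta_2$ have no component in the third summand, Proposition \ref{an automatic orthogonality} gives $\langle\langle R,\varTheta\rangle\rangle=0$, so $R$ drops out. Expanding the remaining pairing leaves the two diagonal terms I want, plus the two cross terms $\langle\langle[\nabla_1\otimes1,\mu_1],1\otimes\varTheta_2\rangle\rangle$ and $\langle\langle[1\otimes\nabla_2,\mu_2],\varTheta_1\otimes1\rangle\rangle$.

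The main obstacle is the vanishing of these two cross terms, since the summands $\Omega_{D_1}^2(\mathcal{A}_1)\otimes\mathcal{A}_2$ and $\mathcal{A}_1\otimes\Omega_{D_2}^2(\mathcal{A}_2)$ are \emph{not} orthogonal in general, as the remark after Proposition \ref{an automatic orthogonality} warns. To treat $\langle\langle[\nabla_1\otimes1,\mu_1],1\otimes\varTheta_2\rangle\rangle$ I would use the factorization of the Dixmier trace from Lemma \ref{induced inner-product}, which rewrites each of its summands as a product $Tr_\omega(v_1^*c_1|D_1|^{-k})\,Tr_\omega(c_2^*v_2|D_2|^{-\ell})$, where $v_1\in\pi_1(\Omega^2(\mathcal{A}_1))$ is a genuine two-form (the form-part of $[\nabla_1\otimes1,\mu_1]$) and $c_1\in\mathcal{A}_1$ is a zero-form coming from the $\mathcal{E}_1$-slot of $1\otimes\varTheta_2$. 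The first factor is the noncommutative integral of a genuine two-form against a zero-form; proving it vanishes is the delicate point, being the analogue---for this non-orthogonal pair of summands---of the $\gamma_1$-cancellation used in Proposition \ref{an automatic orthogonality}, and it is the same parity feature of the even Dirac dga that underlies the vanishing measured by the additivity criterion of Proposition \ref{condition for additivity}. Once both cross terms are shown to drop in this way, the identity above holds and the proposition follows.
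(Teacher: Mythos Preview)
Your overall route---reduce the claim to the identity
\[
\langle\langle[\nabla,\mu],\varTheta\rangle\rangle=\langle\langle[\nabla_1\otimes1,\mu_1],\varTheta_1\otimes1\rangle\rangle+\langle\langle[1\otimes\nabla_2,\mu_2],1\otimes\varTheta_2\rangle\rangle,
\]
decompose $[\nabla,\mu]$ along the three summands of Lemma~\ref{2 form for product system}, and kill the mixed remainder $R$ in $\Omega_{D_1}^1\otimes\Omega_{D_2}^1$ via Proposition~\ref{an automatic orthogonality}---is exactly the paper's strategy. The paper carries out the same expansion on basis vectors $\sigma_i\otimes\tau_j$, separates the $\widetilde\nabla\circ\mu$ and $(1\otimes\Pi)\circ(\mu\otimes1)\circ\nabla$ parts, and arrives at the identical identity.

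The genuine gap is in your final paragraph, and it is one the paper also passes over in silence. You correctly observe that after $R$ is removed one is left with the two diagonal terms plus the cross pairings $\langle\langle[\nabla_1\otimes1,\mu_1],1\otimes\varTheta_2\rangle\rangle$ and $\langle\langle[1\otimes\nabla_2,\mu_2],\varTheta_1\otimes1\rangle\rangle$, and that the summands $\Omega_{D_1}^2\otimes\mathcal{A}_2$ and $\mathcal{A}_1\otimes\Omega_{D_2}^2$ are \emph{not} orthogonal. But your proposed mechanism for disposing of these cross terms does not work. After the Dixmier-trace factorization of Lemma~\ref{induced inner-product} you obtain a factor of the form $Tr_\omega(v_1^*c_1|D_1|^{-k})$ with $v_1$ a genuine element of $\pi_1(\Omega^2(\mathcal{A}_1))$ and $c_1\in\mathcal{A}_1$; you claim this vanishes by a ``parity feature of the even Dirac dga'' analogous to the $\gamma_1$-trick in Proposition~\ref{an automatic orthogonality}. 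It does not: there is no $\gamma_1$ present in such a product to exploit, and indeed this number is precisely (up to relabelling) the quantity $Tr_\omega\big((\varTheta_1(\sigma_i))_i|D_1|^{-k}\big)$ whose nonvanishing is, by Proposition~\ref{condition for additivity}, exactly the obstruction to additivity. So invoking that proposition as evidence of vanishing is backwards---it shows the opposite. The paper's own computation simply drops these same cross terms when passing from the full expansion of $\langle\langle[\nabla,\mu],\varTheta\rangle\rangle$ to the two surviving pieces, without comment; you have done the honest thing in isolating the difficulty, but the argument you sketch for resolving it is not correct.
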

\begin{proof}
We have $\nabla=\nabla_1\otimes 1+1\otimes\nabla_2$ and $\varTheta=\varTheta_1\otimes 1+1\otimes\varTheta_2$ by Propon.
(\ref{the connection}\,,\ref{the curvature}). Now, for the standard basis element $\sigma_i\otimes\tau_j,\,i=1,\ldots,m,\,j=1,\ldots,n$,
of the free $\mathcal{A}$-module $(\mathcal{A}_1\otimes\mathcal{A}_2)^{mn}=\mathcal{A}_1^m\otimes\mathcal{A}_2^n\,$, suppose that
\begin{align*}
\mu(\sigma_i\otimes\tau_j) &= \sum_k\xi_{ijk}\otimes\omega_{ijk}\,\,\in\mathcal{E}\otimes_\mathcal{A}\Omega_D^1(\mathcal{A})\,,\\
\nabla(\sigma_i\otimes\tau_j) &= \sum_k\eta_{ijk}\otimes v_{ijk}\,\,\in\mathcal{E}\otimes_\mathcal{A}\Omega_D^1(\mathcal{A})\,.
\end{align*}
Then,
\begin{eqnarray*}
&  & [\nabla,\mu](\sigma_i\otimes\tau_j)\\
& = & \sum_k\nabla(\xi_{ijk})\omega_{ijk}+\xi_{ijk}\otimes d\omega_{ijk}+\mu(\eta_{ijk})v_{ijk}\\
& = & \sum_k(\nabla_1(\xi_{ijk1})\otimes\xi_{ijk2}+\xi_{ijk1}\otimes\nabla_2(\xi_{ijk2}))\omega_{ijk}+\xi_{ijk}\otimes d\omega_{ijk}+\mu(\eta_{ijk})v_{ijk}\\
& = & \sum_k\nabla_1(\xi_{ijk1})\omega_{ijk1}\otimes\xi_{ijk2}a_2-\nabla_1(\xi_{ijk1})(a_1\otimes\omega_{ijk2})\otimes\xi_{ijk2}+\xi_{ijk1}\otimes
\nabla_2(\xi_{ijk2})(\omega_{ijk1}\otimes a_2)\\
&  & \quad\,\,+\xi_{ijk1}a_1\otimes\nabla_2(\xi_{ijk2})\omega_{ijk2}+\xi_{ijk1}\otimes\xi_{ijk2}a_2\otimes d_1\omega_{ijk1}+\xi_{ijk1}a_1\otimes
\xi_{ijk2}\otimes d_2\omega_{ijk2}\\
&  & \quad\,\,+\xi_{ijk1}\otimes\xi_{ijk2}\otimes(\omega_{ijk1}\otimes d_2a_2-d_1a_1\otimes\omega_{ijk2})+\mu(\eta_{ijk})v_{ijk}\\
\end{eqnarray*}
as an element of $\,\mathcal{E}\otimes_\mathcal{A}\Omega_D^2(\mathcal{A})$, by using Lemma (\ref{2nd differential}\,,\,\ref{product map}).
So, for $\varTheta=\varTheta_1\otimes 1+1\otimes\varTheta_2\,$, we have the following,
\begin{eqnarray*}
&  & \langle\langle\,[\nabla,\mu]\,,\,\varTheta\,\rangle\rangle\\
& = & \sum_{i=1}^m\sum_{j=1}^n\,\langle[\nabla,\mu](\sigma_i\otimes\tau_j)\,,\,\varTheta(\sigma_i\otimes\tau_j)\rangle\\
& = & \sum_{i=1}^m\sum_{j=1}^n\,\langle[\nabla,\mu](\sigma_i\otimes\tau_j)\,,\,\varTheta_1(\sigma_i)\otimes\tau_j\rangle+\langle[\nabla,\mu]
(\sigma_i\otimes\tau_j)\,,\,\sigma_i\otimes\varTheta_2(\tau_j)\rangle\\
& = & \sum_{i=1}^m\sum_{j=1}^n\sum_k\,\langle\nabla_1(\xi_{ijk1})\omega_{ijk1}\otimes\xi_{ijk2}a_2+\xi_{ijk1}\otimes\xi_{ijk2}a_2\otimes
d_1\omega_{ijk1}+\mu(\eta_{ijk})v_{ijk}\,,\varTheta_1(\sigma_i)\otimes\tau_j\rangle\\
&   & \quad\quad\quad+\langle\xi_{ijk1}a_1\otimes\nabla_2(\xi_{ijk2})\omega_{ijk2}+\xi_{ijk1}a_1\otimes\xi_{ijk2}\otimes d_2\omega_{ijk2}+
\mu(\eta_{ijk})v_{ijk}\,,\,\sigma_i\otimes\varTheta_2(\tau_j)\rangle\\
& = & \sum_{i=1}^m\sum_{j=1}^n\sum_k\,\langle(\nabla_1(\xi_{ijk1})\omega_{ijk1}+\xi_{ijk1}\otimes d_1\omega_{ijk1})\otimes\xi_{ijk2}a_2\,,\,
\varTheta_1(\sigma_i)\otimes\tau_j\rangle\\
&  & \,\,\,\,\,+\langle\mu(\eta_{ijk})v_{ijk}\,,\,\varTheta(\sigma_i\otimes\tau_j)\rangle+\langle\,\xi_{ijk1}a_1\otimes(\nabla_2(\xi_{ijk2})
\omega_{ijk2}+\xi_{ijk2}\otimes d_2\omega_{ijk2})\,,\,\sigma_i\otimes\varTheta_2(\tau_j)\rangle\\
& = & \sum_{i=1}^m\sum_{j=1}^n\,\langle((\widetilde{\nabla_1}\otimes 1)\circ Pr_1\circ\mu)(\sigma_i\otimes\tau_j)\,,\,\varTheta_1(\sigma_i)
\otimes\tau_j\rangle+\langle\mu(\eta_{ijk})v_{ijk}\,,\,\varTheta(\sigma_i\otimes\tau_j)\rangle\\
&   & \quad\quad\quad+\langle((1\otimes\widetilde{\nabla_2})\circ Pr_2\circ\mu)(\sigma_i\otimes\tau_j)\,,\,\sigma_i\otimes\varTheta_2(\tau_j)\rangle\\
& = & \langle\langle(\widetilde{\nabla_1}\otimes 1)\circ\mu_1\,,\,\varTheta_1\otimes 1\rangle\rangle+\langle\langle(1\otimes\widetilde{\nabla_2})
\circ\mu_2\,,\,1\otimes\varTheta_2\rangle\rangle+\langle\langle(1\otimes\Pi)\circ(\mu\otimes 1)\circ\nabla\,,\,\varTheta\rangle\rangle
\end{eqnarray*}
where, $\mu_1=Pr_1\circ\mu$ and $\mu_2=Pr_2\circ\mu$. Now,
\begin{eqnarray*}
&  & \langle\langle(1\otimes\Pi)\circ(\mu\otimes 1)\circ\nabla\,,\,\varTheta\rangle\rangle\\
& = & \sum_{i,j}\langle(1\otimes\Pi)\circ(\mu\otimes 1)\circ\nabla(\sigma_i\otimes\tau_j)\,,\,\varTheta(\sigma_i\otimes\tau_j)\rangle\\
& = & \sum_{i,j}\langle(1\otimes\Pi)\circ(\mu\otimes 1)\circ(\nabla_1(\sigma_i)\otimes\tau_j+\sigma_i\otimes\nabla_2(\tau_j))\,,\,\varTheta_1
(\sigma_i)\otimes\tau_j+\sigma_i\otimes\varTheta_2(\tau_j)\rangle\\
& = & \sum_{i,j}\langle(1\otimes\Pi)\circ\mu_1(\nabla_1(\sigma_i)\otimes\tau_j)\,,\,\varTheta_1(\sigma_i)\otimes\tau_j\rangle
+\langle(1\otimes\Pi)\circ\mu_2(\sigma_i\otimes\nabla_2(\tau_j)),\,\sigma_i\otimes\varTheta_2(\tau_j)\rangle\\
& = & \langle\langle(1\otimes\Pi)\circ\mu_1\circ(\nabla_1\otimes 1)\,,\,\varTheta_1\otimes 1\rangle\rangle+\langle\langle(1\otimes\Pi)\circ\mu_2
\circ(1\otimes\nabla_2)\,,\,1\otimes\varTheta_2\rangle\rangle
\end{eqnarray*}
Hence, we have
\begin{eqnarray*}
\langle\langle\,[\nabla,\mu]\,,\,\varTheta\,\rangle\rangle=\langle\langle\,[\nabla_1\otimes 1,\mu_1]\,,\,\varTheta_1\otimes 1\rangle\rangle+
\langle\langle\,[1\otimes\nabla_2,\mu_2]\,,\,1\otimes\varTheta_2\rangle\rangle
\end{eqnarray*}
and this concludes the proof.
\end{proof}

Combining Propn. (\ref{necessary condition}) and (\ref{sufficient condition}) we conclude the following final theorem.

\begin{theorem}
If the Yang-Mills functional is additive then a necessary and sufficient condition for $\nabla$ to be a critical point for the Yang-Mills functional
on the product spectral triple is that both $\nabla_1,\nabla_2$ are critical points for the Yang-Mills functional on the individual spectral triple,
and they satisfy the following equation
\begin{center}
$\langle\langle[\nabla_1\otimes 1,\mu_1]\,,\,\varTheta_1\otimes 1\rangle\rangle+\langle\langle[1\otimes\nabla_2,\mu_2]\,,\,1\otimes\varTheta_2\rangle\rangle=0$
\end{center}
for all $\,\mu\in\mathcal{H}om_\mathcal{A}(\mathcal{E},\mathcal{E}\otimes\Omega_D^1(\mathcal{A}))$, where $\,\mu_1=Pr_1\circ\mu$ and $\mu_2=Pr_2\circ\mu\,$.
\end{theorem}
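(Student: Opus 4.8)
The plan is to assemble the final theorem directly from the two preceding results, treating it purely as a bookkeeping combination rather than a fresh computation. Recall that Proposition~\ref{necessary condition} already establishes the necessity of $\nabla_1,\nabla_2$ being individual critical points, and Proposition~\ref{sufficient condition} provides the exact identity
\begin{center}
$\langle\langle\,[\nabla,\mu]\,,\,\varTheta\,\rangle\rangle=\langle\langle\,[\nabla_1\otimes 1,\mu_1]\,,\,\varTheta_1\otimes 1\rangle\rangle+\langle\langle\,[1\otimes\nabla_2,\mu_2]\,,\,1\otimes\varTheta_2\rangle\rangle$
\end{center}
valid for every $\mu\in\mathcal{H}om_\mathcal{A}(\mathcal{E},\mathcal{E}\otimes_\mathcal{A}\Omega_D^1(\mathcal{A}))$, with $\mu_1=Pr_1\circ\mu$ and $\mu_2=Pr_2\circ\mu$. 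The key observation is that $\nabla$ is a critical point (Def.~\ref{critical point}) precisely when the left-hand side vanishes for all such $\mu$, so the whole theorem reduces to transcribing when the right-hand side vanishes for all $\mu$.

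First I would invoke Proposition~\ref{necessary condition} to record that any critical $\nabla$ forces $\nabla_1,\nabla_2$ to be critical individually; this handles the ``necessity of the individual critical point conditions'' half of the statement. Next I would state the sufficiency direction: if $\nabla_1,\nabla_2$ are critical and the displayed equation holds for all $\mu$, then by Proposition~\ref{sufficient condition} the quantity $\langle\langle[\nabla,\mu],\varTheta\rangle\rangle$ equals the displayed sum, which is $0$, hence $\nabla$ is critical. Finally, for the converse I would observe that if $\nabla$ is critical then the left-hand side vanishes for all $\mu$, and Proposition~\ref{sufficient condition} (via the same identity) forces the displayed sum to vanish for all $\mu$ as well; combined with Proposition~\ref{necessary condition} this gives both the individual-criticality conditions and the orthogonality equation. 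The logical core is simply that the map $\mu\mapsto\langle\langle[\nabla,\mu],\varTheta\rangle\rangle$ and the map $\mu\mapsto$ (the displayed sum) are identically equal as functionals of $\mu$, so their simultaneous vanishing is an exact equivalence.

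The only subtlety I anticipate — and the one place where I would be careful rather than routine — is the compatibility of quantifiers: in Proposition~\ref{sufficient condition} the decomposition $\mu=\mu_1\oplus\mu_2$ via $Pr_1,Pr_2$ is available because of Lemma~\ref{first form}, and one must confirm that as $\mu$ ranges over all of $\mathcal{H}om_\mathcal{A}(\mathcal{E},\mathcal{E}\otimes_\mathcal{A}\Omega_D^1(\mathcal{A}))$ the pair $(\mu_1,\mu_2)$ ranges over the correct product space so that no test directions are lost. Since the direct-sum decomposition of $\Omega_D^1(\mathcal{A})$ in Lemma~\ref{first form} is a genuine $\mathcal{A}$-bimodule splitting, this presents no obstacle; the ``for all $\mu$'' quantifier is faithfully preserved. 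With that checked, the theorem follows by simply chaining Proposition~\ref{necessary condition} and Proposition~\ref{sufficient condition}, so I expect the proof to be essentially a two-line citation of those results with the quantifier remark inserted for rigor.
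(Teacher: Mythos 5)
Your proposal is correct and is essentially identical to the paper's own argument: the paper states the theorem immediately after the two propositions with the single line ``Combining Propn.\ (\ref{necessary condition}) and (\ref{sufficient condition}) we conclude the following final theorem,'' which is precisely the chaining of the identity $\langle\langle[\nabla,\mu],\varTheta\rangle\rangle=\langle\langle[\nabla_1\otimes 1,\mu_1],\varTheta_1\otimes 1\rangle\rangle+\langle\langle[1\otimes\nabla_2,\mu_2],1\otimes\varTheta_2\rangle\rangle$ with the necessity of individual criticality that you describe. Your added remark on the quantifier over $\mu$ versus the pairs $(\mu_1,\mu_2)$ is a sound (if implicit in the paper) point of care, not a deviation in method.
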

\bigskip

%%%%%%%%%%%%%%%%%%%%%%%%%%%%%%%%%%%%%%%%    The product system of noncommutative tori   %%%%%%%%%%%%%%%%%%%%%%%%%%%%%%%%%%%%%%%%

\section{An instance of additivity~: The case of noncommutative tori}

In this section we provide an instance of additivity of the Yang-Mills functional for the case of noncommutative tori.
\begin{definition}
Let $\Theta\in M_n(\mathbb{R})$ be any $n\times n$ real skew-symmetric matrix. Denote by $\mathcal{A}_\Theta$ the universal $C^*$-algebra
generated by $\,n$ unitaries $\,U_1,\ldots,U_n\,$ satisfying
\begin{center}
$U_kU_m=e^{2\pi i\Theta_{mk}}U_mU_k$
\end{center}
for $k,m\in\{1,\ldots,n\}$.
\end{definition}

{\bf Action of the Lie group $\mathbb{T}^n$:} On $\mathcal{A}_\Theta$, the compact connected Lie group $\,\mathbb{T}^n$ acts as follows:
\begin{eqnarray*}
\alpha_{(z_1,\ldots,z_n)}(U_k)=z_kU_k\,,\,k=1,\ldots,n.
\end{eqnarray*}

{\bf The smooth subalgebra $\mathcal{A}_\Theta^\infty$:} The smooth subalgebra of $\mathcal{A}_\Theta$ under this action is given by
\begin{eqnarray*}
\mathcal{A}_\Theta^\infty:=\left\{\sum\,a_{\textbf{r}}\,U^{\textbf{r}}:\{a_{\textbf{r}}\}\in\mathbb{S}(\mathbb{Z}^{n})\,,\,\textbf{r}=
(r_1,\ldots,r_n)\in\mathbb{Z}^n\right\}\,,
\end{eqnarray*}
where, $\,\mathbb{S}(\mathbb{Z}^{n})$ denotes vector space of multisequences $(a_\textbf{r})$ that decay faster than inverse of any polynomial 
in $\,\textbf{r}=(r_1,\ldots,r_n)$. This is a unital subalgebra of $\mathcal{A}_\Theta$ stable under the holomorphic functional calculus (\cite{GVF}),
and called the noncommutative $n$-torus.

{\bf The Trace:} The subalgebra $\mathcal{A}_\Theta^\infty$ is equipped with a unique $\,\mathbb{T}^n$-invariant tracial state given by
$\tau(a)=a_\textbf{0}\,,$ where $\textbf{0}=(0,\ldots,0)\in\mathbb{Z}^n$.

{\bf The G.N.S. Hilbert space:} The Hilbert space $L^2(\mathcal{A}_\Theta^\infty,\tau)$ obtained by applying the G.N.S. construction to
$\tau$ can be identified with $\,\ell^2(\mathbb{Z}^n)$ (\cite{Rfl}).

{\bf The spectral triple:} Consider the irreducible representation of $\mathbb{C}\ell(n)$ on $\mathbb{C}^N$, where $\,N=2^{\lfloor n/2\rfloor}$.
Then, there are $n$ many Clifford gamma matrices $\gamma_1,\ldots ,\gamma_n$ in $M_N(\mathbb{C})$ satisfying $\gamma_r\gamma_s+\gamma_s\gamma_r=
2\delta_{rs}\,,\,r,s\in\{1,\ldots,n\}\,$, where $\delta_{rs}$ denotes the Kronecker delta function. Consider the densely defined unbounded
symmetric operator $D_\Theta:=\sum_{j=1}^n\delta_j\otimes\gamma_j$ where,
\begin{center}
$\delta_j\left(\sum_{\textbf{r}} a_{\textbf{r}}U^{\textbf{r}}\right):=\sum_{\textbf{r}}2\pi ir_ja_{\textbf{r}}U^{\textbf{r}}\,.$
\end{center}
It is known that $D_\Theta\,$ is self-adjoint with compact resolvent, acting on $\mathcal{H}_\Theta=L^2(\mathcal{A}_\varTheta^\infty,\tau)\otimes
\mathbb{C}^N,\,N=2^{\lfloor n/2\rfloor}$. Moreover, $|D_\Theta|^{-n}$ lies in the Dixmier ideal $\mathcal{L}^{(1,\infty)}$ with $Tr_\omega
(|D_\Theta|^{-n})=2N\pi^{n/2}/(n(2\pi)^n\Gamma(n/2))\,$ (see \cite{GVF}, Page $545$). The tuple $(\mathcal{A}_\Theta^\infty,\mathcal{H}_\Theta,D_\Theta)$
gives us a $n$-summable spectral triple on $\mathcal{A}_\Theta^\infty$. If $n$ is even then this is an even spectral triple and the grading operator
comes from the irreducible representation of $\mathbb{C}\ell(n)$ on $\mathbb{C}^N$.
\medskip

We will be working with $\mathcal{A}_\Theta^\infty$ and denote it simply by $\mathcal{A}_\Theta$ for notational brevity. Consider the product
$\mathcal{A}_\Theta\otimes\mathcal{A}_\Phi$, where $\mathcal{A}_\Theta$ is a noncommutative $n$-torus and $\mathcal{A}_\Phi$ is a noncommutative
$m$-torus. It is known that (Proposition $5.1$ and $5.3$ in \cite{CG1}),
\begin{align*}
\Omega_{D_\Theta}^1(\mathcal{A}_\Theta)=\mathcal{A}_\Theta^n\quad &, \quad\Omega_{D_\Phi}^1(\mathcal{A}_\Phi)=\mathcal{A}_\Phi^m\,,\\
\pi_1\left(\Omega^2(\mathcal{A}_\Theta)\right)=\mathcal{A}_\Theta^{1+n(n-1)/2}\quad &, \quad\pi_2\left(\Omega^2(\mathcal{A}_\Phi)\right)=\mathcal{A}_\Phi^{1+m(m-1)/2}\,,\\
\pi_1\left(d_1J_0^1(\mathcal{A}_\Theta)\right)=\mathcal{A}_\Theta\quad &, \quad\pi_2\left(d_2J_0^1(\mathcal{A}_\Phi)\right)=\mathcal{A}_\Phi\,,\\
\Omega_{D_\Theta}^2(\mathcal{A}_\Theta)=\mathcal{A}_\Theta^{n(n-1)/2}\quad &, \quad\Omega_{D_\Phi}^2(\mathcal{A}_\Phi)=\mathcal{A}_\Phi^{m(m-1)/2}\,.
\end{align*}
If $(\mathcal{A}_\Theta,\mathcal{H}_\Theta,D_\Theta)$ is not an even spectral triple (unless $n$ is even) we apply the process described in
point (\ref{from odd to even}) in Section $(2)$ to make it even with grading operator $\gamma$. Let $\,D=D_\Theta\otimes 1+\gamma\otimes D_\Phi$.
Intuitively, one can guess that the Yang-Mills functional is going to be additive in this case. The reason is that $\mathcal{A}_\Theta\otimes
\mathcal{A}_\Phi$ can be identified with a noncommutative $(n+m)$-torus $\mathcal{A}_\Psi$ for an obvious choice of $\Psi$, and $D$ becomes
$D_\Psi$ acting on $\mathcal{H}_\Psi=\ell^2(\mathbb{Z}^{n+m})\otimes\mathbb{C}^{2^{\lfloor(n+m)/2\rfloor}}$. Hence, both $\Omega_{D_\Psi}^1
(\mathcal{A}_\Psi)$ and $\,\Omega_{D_\Psi}^2(\mathcal{A}_\Psi)$ are free modules of rank $(n+m)$ and $(n+m)(n+m-1)/2$ respectively. So, the
Yang-Mills functional on a Hermitian f.g.p module $\mathcal{E}=p\mathcal{A}_\Psi^q$, with $p\in M_q(\mathcal{A}_\Psi)$ a projection, is given by
\begin{center}
$\mathcal{YM}(\nabla)=\sum_{1\leq i<j\leq n+m}\tau_q([\nabla_i,\nabla_j]^*[\nabla_i,\nabla_j])$
\end{center}
where, $\tau_q$ denotes the extended trace $\tau\otimes Trace$ on
$M_q(\mathcal{A}_\Psi)$ (see Proposition $5.12$ in \cite{CG1} for detail). This expression actually proves the additivity of the Yang-Mills
functional but we go through little detail to see why our hypothesis in Section $(3)$, or equivalently Lemma (\ref{2 form for product system}),
is justified in this case.

\begin{proposition}
$\Omega_D^2(\mathcal{A}_\Psi)\cong\,\Omega_{D_\Theta}^2(\mathcal{A}_\Theta)\otimes\mathcal{A}_\Phi\bigoplus\mathcal{A}_\Theta\otimes\Omega_{D_2}^2
(\mathcal{A}_\Phi)\bigoplus\Omega_{D_\Theta}^1(\mathcal{A}_\Theta)\otimes\Omega_{D_\Phi}^1(\mathcal{A}_\Phi)$ as $\mathcal{A}_\Psi=\mathcal{A}_\Theta
\otimes\mathcal{A}_\Phi$-bimodules.
\end{proposition}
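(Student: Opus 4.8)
The plan is to verify the \textbf{Assumption} of Section $[3]$ directly for this pair, after which the stated three-fold decomposition of $\Omega_D^2(\mathcal{A}_\Psi)$ follows at once from Lemma (\ref{2 form for product system}). Everything rests on the explicit Clifford realization of the Dirac dga of a noncommutative torus recalled from (\cite{CG1}), so I would first fix that realization on each factor. Writing $[D_\Theta,a]=\sum_{j=1}^n\delta_j(a)\otimes\gamma_j$, a product $[D_\Theta,a][D_\Theta,b]=\sum_{j,l}\delta_j(a)\delta_l(b)\,\gamma_j\gamma_l$ splits, via $\gamma_j\gamma_l=\delta_{jl}+\tfrac12[\gamma_j,\gamma_l]$, into a scalar (Clifford degree $0$) part and an antisymmetric (degree $2$) part. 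This realizes an internal splitting
\begin{center}
$\pi_1(\Omega^2(\mathcal{A}_\Theta))=(\mathcal{A}_\Theta\cdot 1)\,\bigoplus\,V_1$,
\end{center}
where $\mathcal{A}_\Theta\cdot 1=\pi_1(d_1J_0^1(\mathcal{A}_\Theta))$ is the rank $1$ scalar summand and $V_1=\bigoplus_{j<l}\mathcal{A}_\Theta\,\gamma_j\gamma_l\cong\Omega_{D_\Theta}^2(\mathcal{A}_\Theta)$ is the rank $n(n-1)/2$ antisymmetric summand; the analogous splitting holds for $\mathcal{A}_\Phi$ with summand $V_2\cong\Omega_{D_\Phi}^2(\mathcal{A}_\Phi)$.

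The decisive point, which must be handled carefully because of Remark (\ref{a counter example}), is how the denominator behaves under this realization. I would observe that, as operators on $\mathcal{H}_\Theta\otimes\mathcal{H}_\Phi$, the two a priori distinct summands $\pi_1(d_1J_0^1(\mathcal{A}_\Theta))\otimes\mathcal{A}_\Phi$ and $\mathcal{A}_\Theta\otimes\pi_2(d_2J_0^1(\mathcal{A}_\Phi))$ both collapse onto the single rank-$1$ scalar bimodule $\mathcal{A}_\Psi\cdot 1$, since each scalar generator is a multiple of the identity operator; hence the denominator is exactly $\mathcal{A}_\Psi\cdot 1$, matching $\pi(dJ_0^1(\mathcal{A}_\Psi))$ of rank $1$ from Remark (\ref{a counter example}). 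On the other hand the three Clifford-degree pieces, namely the scalar part $\mathcal{A}_\Psi\cdot 1$, the degree-$2$ part $V_1\otimes\mathcal{A}_\Phi$ on indices in $\{1,\dots,n\}$, and the degree-$2$ part $\mathcal{A}_\Theta\otimes V_2$ on indices in $\{n+1,\dots,n+m\}$, are linearly independent as operators, so inside $\pi(\Omega^2(\mathcal{A}))$ the numerator is the internal direct sum
\begin{center}
$\pi_1(\Omega^2(\mathcal{A}_\Theta))\otimes\mathcal{A}_\Phi+\mathcal{A}_\Theta\otimes\pi_2(\Omega^2(\mathcal{A}_\Phi))=(\mathcal{A}_\Psi\cdot 1)\bigoplus(V_1\otimes\mathcal{A}_\Phi)\bigoplus(\mathcal{A}_\Theta\otimes V_2)$
\end{center}
of rank $1+n(n-1)/2+m(m-1)/2$, again matching Remark (\ref{a counter example}).

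Quotienting the numerator by the denominator $\mathcal{A}_\Psi\cdot 1$ then removes precisely the scalar summand and leaves $(V_1\otimes\mathcal{A}_\Phi)\oplus(\mathcal{A}_\Theta\otimes V_2)\cong\Omega_{D_\Theta}^2(\mathcal{A}_\Theta)\otimes\mathcal{A}_\Phi\oplus\mathcal{A}_\Theta\otimes\Omega_{D_\Phi}^2(\mathcal{A}_\Phi)$ as $\mathcal{A}_\Psi$-bimodules, which is exactly the Assumption. Feeding this into Lemma (\ref{2 form for product system}) yields the claimed decomposition, the third summand $\Omega_{D_\Theta}^1(\mathcal{A}_\Theta)\otimes\Omega_{D_\Phi}^1(\mathcal{A}_\Phi)$ of rank $nm$ coming from the orthogonal summand already isolated in Lemma (\ref{numerator of 2nd form}). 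As an independent check I would identify $\mathcal{A}_\Theta\otimes\mathcal{A}_\Phi$ with the noncommutative $(n+m)$-torus $\mathcal{A}_\Psi$, for $\Psi$ the block-diagonal matrix built from $\Theta,\Phi$, under which $D$ is unitarily equivalent to $D_\Psi$; then the whole statement is just the splitting of the rank $(n+m)(n+m-1)/2$ module $\Omega_{D_\Psi}^2(\mathcal{A}_\Psi)$ according to whether the two Clifford indices lie in the first block, the second block, or are mixed, and indeed $n(n-1)/2+m(m-1)/2+nm=(n+m)(n+m-1)/2$.

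The main obstacle is the denominator collapse together with the splitting just described: because of Remark (\ref{a counter example}) the sums defining numerator and denominator are genuinely non-direct, so a bare rank count is inconclusive. One must pass to the operator (Clifford-degree) realization to see that both scalar pieces coincide with $\mathcal{A}_\Psi\cdot 1$ and that this rank-$1$ module is a direct summand of the numerator complementary to the antisymmetric $2$-forms; only then does the quotient come out free of the correct rank with the correct bimodule structure. Some bookkeeping with the grading operator $\gamma$, and in particular the signs appearing in Lemmas (\ref{2nd differential}) and (\ref{product map}), is needed to confirm that the mixed-index summand is correctly identified, but this does not disturb the scalar-versus-antisymmetric splitting that drives the argument.
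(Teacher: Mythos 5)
Your argument is correct, but its main thrust is genuinely different from the paper's. The paper's own proof is essentially what you relegate to your final ``independent check'': it identifies $\mathcal{A}_\Theta\otimes\mathcal{A}_\Phi$ with the noncommutative $(n+m)$-torus $\mathcal{A}_\Psi$, quotes from \cite{CG1} that $\Omega_{D_\Psi}^2(\mathcal{A}_\Psi)$ is free of rank $(n+m)(n+m-1)/2$, subtracts the rank $nm$ of the mixed summand $\Omega_{D_\Theta}^1(\mathcal{A}_\Theta)\otimes\Omega_{D_\Phi}^1(\mathcal{A}_\Phi)$ supplied by Lemma (\ref{numerator of 2nd form}), and concludes by matching the resulting rank $n(n-1)/2+m(m-1)/2$ against that of $\Omega_{D_\Theta}^2(\mathcal{A}_\Theta)\otimes\mathcal{A}_\Phi\oplus\mathcal{A}_\Theta\otimes\Omega_{D_\Phi}^2(\mathcal{A}_\Phi)$. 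Your central argument is instead the explicit Clifford-degree splitting of $\pi_1(\Omega^2(\mathcal{A}_\Theta))$ and $\pi_2(\Omega^2(\mathcal{A}_\Phi))$ into a scalar (junk) part and an antisymmetric part, together with the observation that the two scalar summands collapse onto the single rank-one bimodule $\mathcal{A}_\Psi\cdot 1$ in the product. This buys something the paper's count leaves implicit: it exhibits the isomorphism concretely as a bimodule map rather than asserting a ``canonical isomorphism'' from equality of ranks, it shows directly why the quotient is free rather than merely a projective summand of the correct rank, and it accounts for the nonzero intersections recorded in Remark (\ref{a counter example}) instead of only being consistent with them. The paper's version is shorter and leans on the known free-module structure of the $(n+m)$-torus forms from \cite{CG1}; yours does the operator-level bookkeeping that justifies that shortcut. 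Both are valid proofs of the statement.
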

\begin{proof}
One can conclude this by comparing the free module (over $\mathcal{A}_\Psi$) dimensions of both sides. Since, $\mathcal{A}_\Psi$ is a noncommutative
$(n+m)$-torus, $\Omega_D^2(\mathcal{A}_\Psi)$ has dimension $(n+m)(n+m-1)/2$ as free module over $\mathcal{A}_\Psi$ (Proposition $5.3$ in \cite{CG1}).
The dimension of $\Omega_{D_\Theta}^1(\mathcal{A}_\Theta)\otimes\Omega_{D_\Phi}^1(\mathcal{A}_\Phi)$ is $\,nm$ as free module over
$\mathcal{A}_\Psi$ (Proposition $5.1$ in \cite{CG1}). Therefore, by Lemma (\ref{numerator of 2nd form}) we see that
\begin{center}
$\frac{\pi(\Omega^2(\mathcal{A}_\Theta))\otimes\mathcal{A}_\Phi+\mathcal{A}_\Theta\otimes\pi(\Omega^2(\mathcal{A}_\Phi))}{\pi(dJ_0^1(\mathcal{A}_\Psi))}
\subseteq\frac{\pi(\Omega^2(\mathcal{A}_\Psi))}{\pi(dJ_0^1(\mathcal{A}_\Psi))}=\Omega_D^2(\mathcal{A}_\Psi)$
\end{center}
must be a free module with dimension $(n+m)(n+m-1)/2-nm=n(n-1)/2+m(m-1)/2$. Since, $\Omega_{D_\Theta}^2(\mathcal{A}_\Theta)\otimes
\mathcal{A}_\Phi\bigoplus\mathcal{A}_\Theta\otimes\Omega_{D_2}^2(\mathcal{A}_\Phi)$ is also a free module of dimension $\,n(n-1)/2+m(m-1)/2$,
we have a canonical isomorphism
\begin{center}
$\frac{\pi(\Omega^2(\mathcal{A}_\Theta))\otimes\mathcal{A}_\Phi+\mathcal{A}_\Theta\otimes\pi(\Omega^2(\mathcal{A}_\Phi))}{\pi(dJ_0^1(\mathcal{A}_\Psi))}
\,\cong\,\Omega_{D_\Theta}^2(\mathcal{A}_\Theta)\otimes\mathcal{A}_\Phi\bigoplus\mathcal{A}_\Theta\otimes\Omega_{D_2}^2(\mathcal{A}_\Phi)$
\end{center}
of $\mathcal{A}_\Psi$-bimodules, and this concludes the proof.
\end{proof}

\begin{remark}\rm
The proof of above Lemma explains Remark $(\ref{a counter example})$ in Section $(3)$. We see that $\pi(dJ_0^1(\mathcal{A}_\Psi))$ is a free module
over $\mathcal{A}_\Psi$ of rank $1$, whereas $\,\pi_1(d_1J_0^1(\mathcal{A}_\Theta))\otimes\mathcal{A}_\Phi\bigoplus\mathcal{A}_\Theta\otimes
\pi_2(d_2J_0^1(\mathcal{A}_\Phi))$ is a free module over $\mathcal{A}_\Psi$ of rank $2$.
\end{remark}

\begin{proposition}
The Yang-Mills functional is additive in this case.
\end{proposition}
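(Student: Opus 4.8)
The plan is to reduce additivity to the criterion of Proposition \ref{condition for additivity}. In the preceding proposition we verified that the hypothesis of Lemma \ref{2 form for product system} holds for $\mathcal{A}_\Psi=\mathcal{A}_\Theta\otimes\mathcal{A}_\Phi$, so Theorem \ref{subadditivity} and Proposition \ref{condition for additivity} are at our disposal. By the latter, additivity is equivalent to $Re(\overline{\xi}\,\eta)=0$, where $\xi=\sum_i Tr_\omega\left((\varTheta_1(\sigma_i))_i|D_\Theta|^{-n}\right)$ and $\eta=\sum_j Tr_\omega\left((\varTheta_2(\mu_j))_j|D_\Phi|^{-m}\right)$ are the complex numbers built from the Dixmier traces of the two curvatures defined there. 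I will show that $\xi$ and $\eta$ vanish individually, after which the cross-term vanishes trivially.

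First I would recall the shape of the Dirac DGA on the torus. Since $[D_\Theta,a]=\sum_r\delta_r(a)\gamma_r$, a product expands as $[D_\Theta,b][D_\Theta,c]=\sum_r\delta_r(b)\delta_r(c)\cdot 1+\sum_{r<s}\left(\delta_r(b)\delta_s(c)-\delta_s(b)\delta_r(c)\right)\gamma_r\gamma_s$ by means of $\gamma_r\gamma_s+\gamma_s\gamma_r=2\delta_{rs}$. The scalar part, i.e. the coefficient of the identity of the Clifford algebra, is precisely the junk $\pi_1(d_1J_0^1(\mathcal{A}_\Theta))=\mathcal{A}_\Theta$ (Proposition $5.3$ in \cite{CG1}). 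Consequently the orthogonal projection $P_1$ onto the orthogonal complement of the junk keeps only the off-diagonal Clifford components, so for any representative $v_i\in\pi_1(\Omega^2(\mathcal{A}_\Theta))$ of $(\varTheta_1(\sigma_i))_i$ the operator $P_1v_i$ has the form $\sum_{r<s}a^{(i)}_{rs}\otimes\gamma_r\gamma_s$ on $\mathcal{H}_\Theta=L^2(\mathcal{A}_\Theta,\tau)\otimes\mathbb{C}^N$.

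The vanishing then follows from the factorisation of the Dixmier trace on the torus: because $|D_\Theta|^{-n}$ is a Clifford-scalar operator, one has $Tr_\omega\left((a\otimes M)|D_\Theta|^{-n}\right)=c\,\tau(a)\,tr_{\mathbb{C}^N}(M)$ for a positive constant $c$, the same computation underlying the value of $Tr_\omega(|D_\Theta|^{-n})$ quoted above. As $\gamma_r\gamma_s=-\gamma_s\gamma_r$ for $r\neq s$ and the matrix trace is cyclic, $tr_{\mathbb{C}^N}(\gamma_r\gamma_s)=0$ whenever $r\neq s$; hence $Tr_\omega(P_1v_i|D_\Theta|^{-n})=0$ for every $i$, giving $\xi=0$. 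The identical argument on $\mathcal{A}_\Phi$ yields $\eta=0$. Therefore $Re(\overline{\xi}\,\eta)=0$, and additivity follows from Proposition \ref{condition for additivity}.

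The main point requiring care is the identification of the junk submodule $\pi_1(d_1J_0^1(\mathcal{A}_\Theta))$ with the scalar Clifford part of $\pi_1(\Omega^2(\mathcal{A}_\Theta))$, which pins down the range of $P_1$ and shows the scalar line is orthogonal to the off-diagonal forms; once this is granted, the tracelessness of $\gamma_r\gamma_s$ for $r\neq s$ makes the computation immediate. Both ingredients are available from the torus computations in \cite{CG1}. Note that although each linear quantity $\xi,\eta$ vanishes, the quadratic terms $\mathcal{YM}(\nabla_1),\mathcal{YM}(\nabla_2)$ need not, so additivity is genuinely informative rather than vacuous.
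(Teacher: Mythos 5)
Your argument is correct, but it is a genuinely different route from the one taken in the paper. The paper does not invoke the criterion of Proposition \ref{condition for additivity} at all: it quotes the explicit formula $\mathcal{YM}(\nabla)=\sum_{1\leq i<j\leq n+m}\tau([\nabla_i,\nabla_j]^*[\nabla_i,\nabla_j])$ from Proposition $5.12$ of \cite{CG1}, observes that for $\nabla=\nabla_1\otimes 1+1\otimes\nabla_2$ the mixed commutators $[\nabla_{1i}\otimes 1\,,\,1\otimes\nabla_{2,j}]$ vanish identically, and then splits the sum into the two pure blocks, additivity following because $\tau=\tau_\Theta\otimes\tau_\Phi\otimes Trace$. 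You instead verify the abstract criterion $Re(\overline{\xi}\eta)=0$ by showing $\xi=\eta=0$ separately, using that the junk $\pi_1(d_1J_0^1(\mathcal{A}_\Theta))$ is exactly the Clifford-scalar component of $\pi_1(\Omega^2(\mathcal{A}_\Theta))$ (so $P_1$ retains only the $\gamma_r\gamma_s$, $r<s$, components), that $|D_\Theta|^{-n}$ is Clifford-scalar, and that $tr_{\mathbb{C}^N}(\gamma_r\gamma_s)=0$ for $r\neq s$; all of these are correct and indeed extractable from the torus computations in \cite{CG1} (the identification of the junk with the scalar part follows from applying $\delta_s$ to the relation $\sum_j b_j\delta_r(c_j)=0$ and antisymmetrizing). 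The paper's route is shorter once the closed formula is granted and makes the mechanism transparent; your route genuinely exercises the machinery of Section $3$ and proves somewhat more, namely that $\xi$ vanishes for \emph{every} compatible connection on \emph{every} Hermitian f.g.p.\ module over a noncommutative torus, so that additivity would persist for the product of a noncommutative torus with any other spectral triple satisfying the Hypothesis, irrespective of whether the second factor's $\eta$ vanishes. Your closing remark that the vanishing of the linear quantities $\xi,\eta$ does not force the quadratic quantities $\mathcal{YM}(\nabla_j)$ to vanish is also well taken.
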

\begin{proof}
Let $\,\mathcal{E}_1=p_1\mathcal{A}_\Theta^{q_n}$ and $\mathcal{E}_2=p_2\mathcal{A}_\Phi^{q_m}$ be two Hermitian f.g.p modules over
$\mathcal{A}_\Theta\,(n\,\,torus)$ and $\mathcal{A}_\Phi\,(m\,\,torus)$ respectively. Let $\nabla_1\in C(\mathcal{E}_1)$ and $\nabla_2\in
C(\mathcal{E}_2)$ be two compatible connections. Since, $\Omega_{D_\Theta}^1(\mathcal{A}_\Theta)$ and $\Omega_{D_\Phi}^1(\mathcal{A}_\Phi)$ both
are free modules of rank $n$ and $m$ respectively, we have
\begin{align*}
\nabla_1(\xi) &= \sum_{j=1}^n\nabla_{1j}(\xi)\otimes\sigma_j\\
\nabla_2(\eta) &= \sum_{k=1}^m\nabla_{2k}(\eta)\otimes\mu_k
\end{align*}
for $\mathbb{C}$-linear maps $\nabla_{1j}:\mathcal{E}_1\longrightarrow\mathcal{E}_1$ and $\nabla_{2k}:\mathcal{E}_2\longrightarrow\mathcal{E}_2$.
Here, $\{\sigma_1,\ldots,\sigma_n\}$ is the standard basis of the free module $\Omega_{D_\Theta}^1(\mathcal{A}_\Theta)$ over $\mathcal{A}_\Theta$
and $\{\mu_1,\ldots,\mu_m\}$ is that of $\Omega_{D_\Phi}^1(\mathcal{A}_\Phi)$ over $\mathcal{A}_\Phi$. It is known that the Yang-Mills functional
on $\mathcal{E}_1=p_1\mathcal{A}_\Theta^{q_n}$ is given by
\begin{eqnarray}\label{eqn 1}
\mathcal{YM}(\nabla_1) & = & \sum_{1\leq i<j\leq n}\tau_{q_n}([\nabla_{1i},\nabla_{1j}]^*[\nabla_{1i},\nabla_{1j}])
\end{eqnarray}
where, $\tau_{q_n}$ denotes the trace $\tau_\Theta\otimes Trace$ on $M_{q_n}(\mathcal{A}_\Theta)$ (see Proposition $5.12$ in \cite{CG1}), and the
Yang-Mills functional on $\mathcal{E}_2=p_2\mathcal{A}_\Phi^{q_m}$ is given by
\begin{eqnarray}\label{eqn 2}
\mathcal{YM}(\nabla_2) & = & \sum_{1\leq i<j\leq m}\tau_{q_m}([\nabla_{2i},\nabla_{2j}]^*[\nabla_{2i},\nabla_{2j}])
\end{eqnarray}
where, $\tau_{q_m}$ denotes the trace $\tau_\Phi\otimes Trace$ on $M_{q_m}(\mathcal{A}_\Phi)$. Now, $\mathcal{E}=\mathcal{E}_1\otimes\mathcal{E}_2=
(p_1\otimes p_2)\mathcal{A}_\Psi^{q_nq_m}$ and the Yang-Mills functional on $\mathcal{E}$ is given by
\begin{eqnarray}\label{eqn 3}
\mathcal{YM}(\nabla) & = & \sum_{1\leq i<j\leq m+n}\tau([\nabla_i,\nabla_j]^*[\nabla_i,\nabla_j])
\end{eqnarray}
where, $\nabla_k:\mathcal{E}\longrightarrow\mathcal{E}$ are $\mathbb{C}$-linear maps and $\tau$ denotes the trace $\tau_\Theta\otimes\tau_\Phi
\otimes Trace\,$ on $M_{q_nq_m}(\mathcal{A}_\Psi)$. Since, $\nabla=\nabla_1\otimes 1+1\otimes\nabla_2$ we have
\begin{align*}
\nabla_k(e_1\otimes e_2) &= \nabla_{1k}(e_1)\otimes e_2\,,\,\,1\leq k\leq n\\
\nabla_k(e_1\otimes e_2) &= e_1\otimes\nabla_{2,k-n}(e_2)\,,\,\,n+1\leq k\leq n+m
\end{align*}
Then,
\begin{eqnarray*}
&  & \sum_{1\leq i<j\leq m+n}[\nabla_i,\nabla_j]^*[\nabla_i,\nabla_j]\\
& = &\sum_{1\leq i<j\leq n}[\nabla_{1i},\nabla_{1j}]^*[\nabla_{1i},\nabla_{1j}]\otimes 1+\sum_{1\leq i<j\leq m}1\otimes[\nabla_{2i},\nabla_{2j}]^*
[\nabla_{2i},\nabla_{2j}]\,.
\end{eqnarray*}
In view of equations (\ref{eqn 1}),(\ref{eqn 2}),(\ref{eqn 3}), and because $\tau=\tau_\varTheta\otimes\tau_\Phi\otimes Trace$, we can now conclude
that the Yang-Mills functional is additive in this case, i,e. the condition described in Propn. (\ref{condition for additivity}) is satisfied.
\end{proof}
\bigskip

%%%%%%%%%%%%%%%%%%%%%%%%%%%%%%%%%%%%%%%%    The product system of spin manifolds and matrix algebras   %%%%%%%%%%%%%%%%%%%%%%%%%%%%%%%%%%%%%%%%

\section{The case of spin manifolds and Matrix algebras}

Let $\mathcal{M}$ be an even dimensional closed Riemannian spin manifold and $\mathcal{A}_1=C^\infty(\mathcal{M})$ be the algebra of smooth functions.
It is known that $C^\infty(\mathcal{M})$ is spectrally invariant in the unital $C^*$-algebra $C(\mathcal{M})$ (\cite{GVF}). Let $\mathcal{A}_2$
be a matrix algebra. Consider $\mathcal{A}=\mathcal{A}_1\otimes\mathcal{A}_2$. This algebra is a generalization of the product system ``four dimensional
manifold $\times$ 2-point space" considered in (\cite{Con2}). This is the algebra appearing in many examples in Physics. Let $\pi_1:C^\infty(\mathcal{M})
\longrightarrow\mathcal{B}(\mathcal{H}_1=L^2(S))$ be the representation of smooth functions on the square-integrable spinors, and $\pi_2:\mathcal{A}_2
\longrightarrow\mathcal{B}(\mathbb{C}^n)$ be a faithful representation of the matrix algebra $\mathcal{A}_2$ on $\mathbb{C}^n$ for some suitable $n$.
Let $D_1=i\slashed{\partial}_\mu\gamma^\mu$ be the Dirac operator associated to the spin manifold $\mathcal{M}$, and $D_2$ be a $n\times n$ self-adjoint
matrix. Let $\gamma$ denotes the grading automorphism of the Clifford algebra associated to $\mathcal{M}$ ($\gamma:=i^{d/2}\gamma_1\ldots\gamma_d$).
We have two even spectral triples $(C^\infty(\mathcal{M}),\mathcal{H}_1,D_1,\gamma)$ and $(\mathcal{A}_2,\mathbb{C}^n,D_2,\gamma_2)$. Consider
$D=D_1\otimes 1+\gamma\otimes D_2$. It is known that for all $k\geq 2$,
\begin{eqnarray}\label{an equality for function space}
\pi_1(\Omega^k(\mathcal{A}_1))\supseteq\pi_1(J^k(\mathcal{A}_1))=\pi_1(\Omega^{k-2}(\mathcal{A}_1))
\end{eqnarray}
and for all $k\geq 1,\,\Omega_{D_1}^{\,k}\cong\Gamma\left(\mathcal{M},\wedge^k\,T^*\mathcal{M}\right)$ (\cite{Con2},\cite{LAN}).

Now, for two even spectral triples $(\mathcal{A}_j,\mathcal{H}_j,D_j,\gamma_j),\,j=1,2$, there is an isomorphism of dgas between the Dirac dga
$\Omega_D^\bullet(\mathcal{A}_1\otimes \mathcal{A}_2)$ and the skew dga $\widetilde\Omega_D^\bullet(\mathcal{A}_1,\mathcal{A}_2)$ (see
\cite{KPPW} for detail). That is,
\begin{eqnarray}\label{isomorphism of tensored complex}
\Omega_D^n(\mathcal{A}_1\otimes \mathcal{A}_2) \cong \widetilde\Omega_D^n(\mathcal{A}_1,\mathcal{A}_2)\,\,\,\forall\,n\geq 0,
\end{eqnarray}
where the definition of $\widetilde\Omega_D^\bullet(\mathcal{A}_1,\mathcal{A}_2)$ is given below.

\begin{definition}\label{description of skew complex}
Consider the reduced universal dgas $\left(\Omega^\bullet(\mathcal{A}_1),d_1\right)$ and $\left(\Omega^\bullet(\mathcal{A}_2),d_2\right)$ associated
with the spectral triples $(\mathcal{A}_1,\mathcal{H}_1,D_1,\gamma_1)$ and $(\mathcal{A}_2,\mathcal{H}_2,D_2,\gamma_2)$ respectively. Now, consider
the product dga $\left(\Omega^\bullet(\mathcal{A}_1)\otimes\Omega^\bullet(\mathcal{A}_2)\,,\,\widetilde d\,\right)$ where,
\begin{align*}
(\omega_i\otimes u_j).(\omega_p\otimes u_q) &:= (-1)^{jp}\omega_i\omega_p\otimes u_ju_q\\
\widetilde{d}(\omega_i\otimes u_j) &:= d_1(\omega_i)\otimes u_j+(-1)^i\omega_i\otimes d_2(u_j)
\end{align*}
for $\omega_\bullet\in\Omega^\bullet(\mathcal{A}_1)$ and $u_\bullet\in\Omega^\bullet(\mathcal{A}_2)$. One can represent $\Omega^\bullet
(\mathcal{A}_1)\otimes\Omega^\bullet(\mathcal{A}_2)$ on $\mathcal{B}(\mathcal{H}_1\otimes\mathcal{H}_2)$ by the following map
\begin{center}
$\widetilde{\pi}(\omega_i\otimes u_j):=\pi_1(\omega_i)\gamma_1^j\otimes \pi_2(u_j)\,.$
\end{center}
Let 
\begin{center}
$\widetilde{J_0}^{k}:=Ker\left\{\widetilde\pi:\bigoplus_{i+j=k}\Omega^i(\mathcal{A}_1)\otimes\Omega^j(\mathcal{A}_2)\longrightarrow\mathcal{B}
(\mathcal{H}_1\otimes\mathcal{H}_2)\right\},$
\end{center}
and $\widetilde{J}^{\,n}=\widetilde{J_0}^n+\widetilde{d}\widetilde{J_0}^{n-1}$. Define $\widetilde{\Omega}_D^n(\mathcal{A}_1,\mathcal{A}_2):=
\frac{\bigoplus_{i+j=n} \Omega^i(\mathcal{A}_1)\otimes\Omega^j(\mathcal{A}_2)}{\widetilde J^n (\mathcal{A}_1,\mathcal{A}_2)}\,,\,\forall\,n\geq 0$.
We call it the skew dga.
\end{definition}

One has to compute $\Omega_{D_2}^\bullet(\mathcal{A}_2)$ first. Recall from ($\S\,(4)$ in \cite{KPPW}) that there are three cases. Let
$\mathcal{A}_2$ be given as the direct sum of the algebras $\mathcal{A}_{2,1}=M_p(\mathbb{C})$ and $\mathcal{A}_{2,2}=M_q(\mathbb{C})$.
The representation and the Dirac operator takes the form
\begin{center}
$\pi_2(\mathcal{A}_2)=\begin{pmatrix}
\mathcal{A}_{2,1} & 0\\
0 & \mathcal{A}_{2,2}
\end{pmatrix}\quad,\quad D_2=\begin{pmatrix}
0 & \mu^*\\
\mu & 0
\end{pmatrix}$
\end{center}
where $\mu$ denotes an arbitrary (non-zero) complex $p\times q$ matrix. Then, one has the following three cases.
\begin{enumerate}
\item[\underline{Case 1}~:] $\mu^*\mu\sim 1_{q\times q}$ and $\mu\mu^*\sim 1_{p\times p}$, which is possible only for $p=q$. In this case
$\mathcal{A}_{2,1}=\mathcal{A}_{2,2}$ and
\begin{center}
$\Omega_{D_2}^{2k}(\mathcal{A}_2)=\begin{pmatrix}
\mathcal{A}_{2,1} & 0\\
0 & \mathcal{A}_{2,1}
\end{pmatrix}\quad,\quad\Omega_{D_2}^{2k+1}(\mathcal{A}_2)=\begin{pmatrix}
0 & \mathcal{A}_{2,1}\\
\mathcal{A}_{2,1} & 0
\end{pmatrix}$
\end{center}
The multiplication rule is just the ordinary matrix multiplication of $2p\times 2p$ matrices. 
\item[\underline{Case 2}~:] $\mu^*\mu\nsim 1_{q\times q}$ and $\mu\mu^*\nsim 1_{p\times p}$. In this case
\begin{center}
$\Omega_{D_2}^1(\mathcal{A}_2)=\{X\in M_{q\times p}(\mathbb{C})\}\bigoplus\{Y\in M_{p\times q}(\mathbb{C})\}$
\end{center}
and there is no non-trivial multiplication of elements in $\Omega_{D_2}^1$.
\item[\underline{Case 3}~:] $q\leq p,\,\mu^*\mu\sim 1_{q\times q}$ and $\mu\mu^*\nsim 1_{p\times p}$. In this case
\begin{center}
$\Omega_{D_2}^1(\mathcal{A}_2)=\{X\in M_{q\times p}(\mathbb{C})\}\bigoplus\{Y\in M_{p\times q}(\mathbb{C})\}\quad,\quad\Omega_{D_2}^2
(\mathcal{A}_2)=\begin{pmatrix}
\mathcal{A}_{2,1} & 0\\
0 & 0
\end{pmatrix}$
\end{center} and all higher degrees of $\Omega_{D_2}^\bullet(\mathcal{A}_2)$ are trivial. The multiplication rule is given by
\begin{align*}
\Pi:\Omega_{D_2}^1(\mathcal{A}_2)\times\Omega_{D_2}^1(\mathcal{A}_2) &\longrightarrow \Omega_{D_2}^2(\mathcal{A}_2)\\
\begin{pmatrix}
X & 0\\
0 & Y
\end{pmatrix}\star\begin{pmatrix}
X^\prime & 0\\
0 & Y^\prime
\end{pmatrix} &= \begin{pmatrix}
X.Y^\prime & 0\\
0 & 0
\end{pmatrix}
\end{align*}
where, $X.Y^\prime$ denotes the usual matrix multiplication.
\end{enumerate}

Using these three cases and equation (\ref{an equality for function space}), it is shown in ($\S\,(7)$ in \cite{KPPW}) that the dga
$\widetilde{\Omega}_D^\bullet(\mathcal{A}_1,\mathcal{A}_2)$ is the tensor product of the Dirac dga of $\mathcal{A}_1$ and $\mathcal{A}_2$. From
the isomorphism in (\ref{isomorphism of tensored complex}) it now follows that Lemma (\ref{2 form for product system}) holds in this case, i,e.
our hypothesis in Section $(3)$ is justified.
\bigskip

%%%%%%%%%%%%%%%%%%%%%%%%%%%%%%%%%%%%%%%%     The product system of quantum Heisenberg manifolds   %%%%%%%%%%%%%%%%%%%%%%%

\section{The case of quantum Heisenberg manifolds}

Recall the definition of quantum Heisenberg manifolds from (\cite{Rfl1}). For $x\in\mathbb{R},\,e(x)$ stands for $e^{2\pi ix}$, where $i=\sqrt{-1}$.
\begin{definition}
For any positive integer $c$, let $S^c$ denote the space of  smooth  functions $\Phi:\mathbb{R}\times\mathbb{T}\times\mathbb{Z}\rightarrow
\mathbb{C}$ such that 
\begin{itemize}
\item[(1)] $\Phi(x+k,y,p)=e(ckpy)\Phi(x,y,p)$ for all $k\in\mathbb{Z}$,
\item[(2)] For every polynomial $P$ on $\mathbb{Z}$ and every partial differential operator $\widetilde{X}=\frac{\partial^{m+n}}{\partial x^m\partial
y^n}$ on $\mathbb{R}\times\mathbb{T}$ the function $P(p)(\widetilde{X}\Phi)(x,y,p)$ is bounded on $K\times\mathbb{Z}$ for any compact subset $K$
of $\mathbb{R}\times\mathbb{T}$.
\end{itemize}
For each  $\hbar,\mu,\nu\in\mathbb{R},\mu^2+\nu^2\ne 0$, let ${\mathcal{A}}^{\infty}_{\hbar}$ denote $S^c$ with product and involution defined by
\begin{align*}
(\Phi\star\Psi)(x,y,p) &:= \sum_q\Phi(x-\hbar(q-p)\mu,y-\hbar(q-p)\nu,q)\Psi(x-\hbar q\mu,y-\hbar q\nu,p-q)\,,\\
\Phi^*(x,y,p) &:= \,\overline{\Phi}(x,y,-p)\,.
\end{align*}
Then, $\pi:{\mathcal{A}}^{\infty}_{\hbar}\rightarrow\mathcal{B}(L^2(\mathbb{R}\times\mathbb{T}\times\mathbb{Z}))$ given by
\begin{center}
$(\pi(\Phi)\xi)(x,y,p)=\sum_q\Phi(x-\hbar(q-2p)\mu,y-\hbar(q-2p)\nu,q)\xi(x,y,p-q)$
\end{center}
gives a faithful representation of the involutive  algebra $\AAZ$. Now, ${\mathcal{A}}^{c,\hbar}_{\mu,\nu}=$ norm closure of $\pi(\AAZ)$ is called
the quantum Heisenberg manifold.
\end{definition}

We will identify $\AAZ$ with $\pi(\AAZ)$ without any mention. Since, we are  going to work with fixed parameters $c,\mu,\nu,\hbar$ we will drop
them altogether and denote ${\mathcal{A}}^{c,\hbar}_{\mu,\nu}$ simply by $\mathcal{A}_\hbar$. Here the subscript remains merely as a reminiscent
of  Heisenberg only to distinguish it from a general algebra. Moreover, $\AAZ$ is spectrally invariant subalgebra of $\mathcal{A}_\hbar$.

{\bf Action of the Heisenberg group:} Let $c$ be a positive integer. Let us consider the  group structure on $G=\mathbb{R}^3=\{(r,s,t): r,s,t\in
\mathbb{R}\}$ given by the multiplication
\begin{center}
$(r,s,t)(r',s',t')=(r+r',s+s',t+t'+csr').$
\end{center}
There is an explicit isomorphism (\cite{CG2}) between $G$ and $H_3$, the Heisenberg group of $3\times 3$ upper triangular matrices with real entries
and $1$'s on the diagonal. For $\Phi\in S^c,(r,s,t)\in\mathbb{R}^3\equiv G$,  
\begin{center}
$(L_{(r,s,t)} \phi )(x,y,p)=e(p(t+cs(x-r)))\phi(x-r,y-s,p)$
\end{center}
extends to an ergodic action of the Heisenberg group on ${\mathcal{A}}_\hbar$.

{\bf The Trace:} The linear functional $\tau:\AAZ\rightarrow\mathbb{C}$, given by $\tau(\phi)=\int^1_0\int_{\mathbb{T}}\phi(x,y,0)dxdy$
is invariant under the Heisenberg group action. So, the group action can be lifted to $L^2(\AAZ)$. The action at the Hilbert space level is denoted
by the same symbol.

{\bf The G.N.S. Hilbert space:} The Hilbert space $L^2(\AAZ,\tau)$ obtained by applying the G.N.S. construction to $\tau$ is isomorphic to
$L^2(\mathbb{T}\times\mathbb{T}\times\mathbb{Z})\cong L^2([0,1]\times[0,1]\times\mathbb{Z})$ (\cite{CS}).

{\bf The spectral triple:} One fixes an inner product on the Lie algebra of the Heisenberg Lie group by declaring the following basis,
\begin{eqnarray*}
X_1=\left(\begin{matrix} 0&0&0 \cr 0&0&1 \cr 0& 0&0\cr \end{matrix}\right),
X_2=\left(\begin{matrix} 0&c&0 \cr 0&0&0\cr 0& 0&0\cr\end{matrix}\right),
X_3=\left(\begin{matrix} 0&0&c\alpha  \cr 0&0&0 \cr 0& 0&0\cr\end{matrix}\right)
\end{eqnarray*}
as orthonormal. Here, $\alpha>1$ is a real number. Then, $D_\hslash=\sum_{j=1}^3i\delta_j\otimes\sigma_j$ is an unbounded self-adjoint operator
with compact resolvent acting on $\mathcal{H}_\hslash:=L^2(\AAZ,\tau)\otimes\mathbb{C}^2$, where
\begin{align*}
\delta_1(f) &= - \frac{\partial f}{\partial x}\,,\\
\delta_2(f) &= 2\pi icpxf(x,y,p)-\frac{\partial f}{\partial y}\,,\\
\delta_3(f) &= 2\pi ipc\alpha f(x,y,p)\,.
\end{align*}
and
\begin{eqnarray*}
\sigma_1=\left(\begin{matrix} 1& 0 \cr 0&-1 \cr \end{matrix}\right)\,,
\,\,\sigma_2=\left(\begin{matrix} 0& -1 \cr -1& 0 \cr \end{matrix}\right)\,,
\,\,\sigma_3=\left(\begin{matrix} 0& i \cr -i& 0 \cr \end{matrix}\right)
\end{eqnarray*}
are the $2\times 2$ Pauli spin matrices. The tuple $(\AAZ,\mathcal{H}_\hslash,D_\hslash)$ is a $3$-summable spectral triple on $\AAZ$
(\cite{CS},\cite{CG2}).
\medskip

We will consider $\{1,\hbar\mu,\hbar\nu\}$ to be linearly independent over $\mathbb{Q}$. In that case $\mathcal{A}_\hbar$, and hence
$\AAZ$, becomes a simple algebra (\cite{Rfl1},\cite{CS}). We need this simpleness otherwise computation of the Dirac dga $\Omega_D^\bullet$
done in (\cite{CS}) fails. Let $\phi_{mn}\in S^c$ be the function $\phi_{m,n}(x,y,p)=e(mx+ny)\delta_{p0}$. These functions are eigenfunctions
for $\,\delta_j$'s and they satisfy
\begin{align*}
\delta_1(\phi_{10}) &= 2\pi\phi_{10} & \delta_2(\phi_{10}) &=0 & \delta_3(\phi_{10}) &=0\\
\delta_1(\phi_{01}) &=0 & \delta_2(\phi_{01}) &= 2\pi\phi_{01} & \delta_3(\phi_{01}) &=0
\end{align*}
Using these functions $\,\phi_{mn}$, and simpleness of $\AAZ$, it is shown in (\cite{CS}) that
\begin{align*}
\Omega_{D_\hslash}^1(\AAZ)=(\AAZ)^3\quad &, \quad\pi(\Omega^2(\AAZ))\cong(\AAZ)^4\,,\\
\pi(dJ_0^1(\AAZ))\cong\AAZ\quad &, \quad\Omega_{D_\hslash}^2(\AAZ)=(\AAZ)^3\,.
\end{align*}
In this section we consider $\mathcal{A}=\AAZ\otimes\AAZ$.
Since, $(\AAZ,\mathcal{H}_\hslash,D_\hslash)$ is an odd spectral triple we apply the process described in point (\ref{from odd to even}) in Section
$(2)$ to make it even with grading operator $\gamma$. Let $D=D_\hslash\otimes 1+\gamma\otimes D_\hslash$. Unlike the last section, here we go in
a straightforward way to verify that our hypothesis in Section $(3)$, or equivalently Lemma (\ref{2 form for product system}), is justified in this
case.

\begin{proposition}\label{to be used in next section}
$\Omega_D^2(\mathcal{A})\cong\,\Omega_{D_\hslash}^2(\AAZ)\otimes\AAZ\bigoplus\AAZ\otimes\Omega_{D_\hslash}^2(\AAZ)\bigoplus\Omega_{D_\hslash}^1(\AAZ)
\otimes\Omega_{D_\hslash}^1(\AAZ)$ as $\mathcal{A}=\AAZ\otimes\AAZ$-bimodules.
\end{proposition}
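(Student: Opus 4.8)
The plan is to reduce everything to the Assumption of Section $(3)$ for the pair $(\AAZ,\AAZ)$, since Lemma \ref{2 form for product system} tells us that this Assumption is equivalent to the asserted splitting of $\Omega_D^2(\mathcal{A})$. All the single-factor data are available from \cite{CS}: as free $\AAZ$-bimodules one has $\Omega_{D_\hslash}^1(\AAZ)\cong(\AAZ)^3$, $\pi(\Omega^2(\AAZ))\cong(\AAZ)^4$, $\pi(dJ_0^1(\AAZ))\cong\AAZ$ and $\Omega_{D_\hslash}^2(\AAZ)\cong(\AAZ)^3$. Writing $A:=\pi_1(\Omega^2(\AAZ))\otimes\AAZ$, $B:=\AAZ\otimes\pi_2(\Omega^2(\AAZ))$, $A':=\pi_1(dJ_0^1(\AAZ))\otimes\AAZ$ and $B':=\AAZ\otimes\pi_2(dJ_0^1(\AAZ))$, Lemmas \ref{numerator of 2nd form} and \ref{denominator of 2nd form} identify the numerator and denominator of the Assumption as $N=A+B$ and $M=A'+B'$, with $A'\subseteq A$, $B'\subseteq B$ and $M\subseteq N$; so it remains to compute $N/M$.

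The decisive step is to read off the Clifford grading of the second forms of each factor. Because $D_\hslash=\sum_{j=1}^3 i\delta_j\otimes\sigma_j$, a product $[D_\hslash,b][D_\hslash,c]$ decomposes, through $\sigma_l\sigma_m=\delta_{lm}+i\sum_p\varepsilon_{lmp}\sigma_p$, into an identity-spinor (scalar) term and three Pauli terms. The defining derivations obey the Heisenberg relations $[\delta_1,\delta_2]=-2\pi i c\,p$ (a scalar multiple of $\delta_3$) and $[\delta_1,\delta_3]=[\delta_2,\delta_3]=0$, and I would combine these with the junk constraint $\sum_j b_j[D_\hslash,c_j]=0$ — i.e. $\sum_j b_j\delta_l(c_j)=0$ for every $l$ — to show that each Pauli component of a junk form vanishes; the antisymmetrised coefficient of $\sigma_p$ is proportional to $\sum_j b_j[\delta_m,\delta_l](c_j)$, which the relations and the constraint kill. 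Hence $\pi(dJ_0^1(\AAZ))$ sits inside the scalar part of $\pi(\Omega^2(\AAZ))$, and matching ranks against $\Omega_{D_\hslash}^2(\AAZ)\cong(\AAZ)^3$ forces it to be exactly that scalar part, with $\Omega_{D_\hslash}^2(\AAZ)$ carried by the three Pauli directions. The main obstacle is precisely this spinor bookkeeping — deciding which components survive the junk relations — and it is the only place where the concrete Heisenberg computation of \cite{CS} enters; the remainder is formal.

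Granting this, an element of $A\cap B$ carries the identity spinor on both tensor legs, so its first leg is a scalar second form of the first factor, hence junk by the previous step; thus $A\cap B=\pi_1(dJ_0^1(\AAZ))\otimes\pi_2(dJ_0^1(\AAZ))=A'\cap B'$, lying in both $A'$ and $B'$, in exact parallel with Remark \ref{a counter example}. I would then define the $\mathcal{A}$-bimodule map $N=A+B\longrightarrow (A/A')\oplus(B/B')$ by $a+b\mapsto(a+A',\,b+B')$; it is well defined because any ambiguity $a-a_1=b_1-b$ lies in $A\cap B\subseteq A'\cap B'$, it is visibly surjective, and its kernel is exactly $A'+B'=M$. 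Since $A/A'=\big(\pi_1(\Omega^2(\AAZ))/\pi_1(dJ_0^1(\AAZ))\big)\otimes\AAZ\cong\Omega_{D_\hslash}^2(\AAZ)\otimes\AAZ$ and symmetrically $B/B'\cong\AAZ\otimes\Omega_{D_\hslash}^2(\AAZ)$, this yields $N/M\cong\Omega_{D_\hslash}^2(\AAZ)\otimes\AAZ\bigoplus\AAZ\otimes\Omega_{D_\hslash}^2(\AAZ)$, which is the Assumption. Lemma \ref{2 form for product system} then delivers the stated decomposition of $\Omega_D^2(\mathcal{A})$, the leftover summand $\Omega_{D_\hslash}^1(\AAZ)\otimes\Omega_{D_\hslash}^1(\AAZ)$ being untouched by $M$. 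The rank comparisons used along the way are legitimate because $\mathcal{A}=\AAZ\otimes\AAZ$ carries a faithful trace and hence free modules over it have a well-defined rank.
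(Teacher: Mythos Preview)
Your proof is correct and reaches the same conclusion as the paper, but by a genuinely different route. The paper verifies the Assumption by a direct product computation: it expands a general element of $\pi(\Omega^2(\mathcal{A}))$ against the sixteen-element spinor basis $\{I_4,\,\sigma_m\sigma_n\otimes I_2,\,I_2\otimes\sigma_m\sigma_n,\,\sigma_j\otimes\sigma_\ell\}$ of $M_4(\mathbb{C})$, uses simplicity of $\AAZ$ to show the resulting map onto $(\AAZ\otimes\AAZ)^{16}$ is surjective, and then does rank arithmetic ($16=9+7$ and $7-1=6$) together with a separate computation that $A'\cap B'$ is rank one. You instead stay entirely at the level of a single factor: the Heisenberg relations $[\delta_1,\delta_2]\propto\delta_3$, $[\delta_1,\delta_3]=[\delta_2,\delta_3]=0$ combined with the junk constraint force $\pi(dJ_0^1(\AAZ))$ into the scalar-spinor direction, and this structural fact yields $A\cap B=A'\cap B'$ directly, after which the quotient map $N\to(A/A')\oplus(B/B')$ is a clean isomorphism-theorem argument. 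Your approach explains \emph{why} the Assumption holds --- the overlap $A\cap B$ is the identity-spinor piece, which is precisely the junk --- whereas the paper's approach confirms it by counting; on the other hand, the paper's explicit rank-16 computation is reusable for Section~7.

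One small point: your phrase ``matching ranks against $\Omega_{D_\hslash}^2(\AAZ)\cong(\AAZ)^3$ forces it to be exactly that scalar part'' is not justified by the faithful-trace remark at the end. A rank-one sub-bimodule of a rank-one free bimodule need not be everything; what you actually need here is that a sub-bimodule of $\AAZ$ is a two-sided ideal, and the standing hypothesis that $\AAZ$ is simple (which the paper imposes precisely so that the computations of \cite{CS} apply) then forces the nonzero junk to fill the entire scalar direction. With that one word added, your argument is complete.
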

\begin{proof}
We first claim that $\pi_1(\Omega^2(\AAZ))\otimes\AAZ+\AAZ\otimes\pi_2(\Omega^2(\AAZ))$ is a free bimodule of rank 7 over $\AAZ\otimes\AAZ$. Note
that $\Omega_{D_\hslash}^1(\AAZ)\otimes\Omega_{D_\hslash}^1(\AAZ)$ is free of rank 9 and hence, in view of Lemma (\ref{numerator of 2nd form}), it
is enough to show that $\pi(\Omega^2(\mathcal{A}))$ is free with rank 16. Arbitrary element of $\pi(\Omega^2(\mathcal{A}))$ looks like
\begin{eqnarray*}
&  & \sum_{i,j,k}(a_{0i}\otimes a_{1i})[D,b_{0j}\otimes b_{1j}][D,c_{0k}\otimes c_{1k}]\\
& = & \sum_{i,j,k}a_{0i}\left(\sum_{n=1}^3\delta_n(b_{0j})\otimes\sigma_n\right)\left(\sum_{n=1}^3\delta_n(c_{0k})\otimes\sigma_n\right)\otimes
a_{1i}b_{1j}c_{1k}\\
&   & \quad+a_{0i}b_{0j}c_{0k}\otimes a_{1i}\left(\sum_{n=1}^3\delta_n(b_{1j})\otimes\sigma_n\right)\left(\sum_{n=1}^3\delta_n(c_{1k})\otimes
\sigma_n\right)\\
&   & \quad+\gamma\left(a_{0i}b_{0j}\left(\sum_{n=1}^3\delta_n(c_{0k})\otimes\sigma_n\right)\otimes a_{1i}\left(\sum_{n=1}^3\delta_n(b_{1j})\otimes
\sigma_n\right)c_{1k}\right)\\
&   & \quad-\gamma\left(a_{0i}\left(\sum_{n=1}^3\delta_n(b_{0j})\otimes\sigma_n\right)c_{0k}\otimes a_{1i}b_{1j}\left(\sum_{n=1}^3\delta_n(c_{1k})
\otimes\sigma_n\right)\right)\\
& = & \sum_{i,j,k}\left(a_{0i}\left(\sum_{n=1}^3\delta_n(b_{0j})\delta_n(c_{0k})\right)\otimes a_{1i}b_{1j}c_{1k}+a_{0i}b_{0j}c_{0k}\otimes a_{1i}
\left(\sum_{n=1}^3\delta_n(b_{1j})\delta_n(c_{1k})\right)\right)\otimes I_4\\
&   & \quad+\sum_{1\leq m<n\leq 3}(a_{0i}(\delta_m(b_{0j})\delta_n(c_{0k})-\delta_n(b_{0j})\delta_m(c_{0k}))\otimes a_{1i}b_{1j}c_{1k})
\otimes(\sigma_m\sigma_n\otimes I_2)\\
&   & \quad+\sum_{1\leq m<n\leq 3}(a_{0i}b_{0j}c_{0k}\otimes a_{1i}(\delta_m(b_{1j})\delta_n(c_{1k})-\delta_n(b_{1j})\delta_m(c_{1k})))
\otimes(I_2\otimes\sigma_m\sigma_n)\\
&   & \quad+\,\gamma\sum_{1\leq m,n\leq 3}(a_{0i}b_{0j}\delta_m(c_{0k})\otimes a_{1i}\delta_n(b_{1j})c_{1k}-a_{0i}\delta_m(b_{0j})c_{0k}
\otimes a_{1i}b_{1j}\delta_n(c_{1k}))\otimes(\sigma_m\otimes\sigma_n)
\end{eqnarray*}
Here, we are using the canonical isomorphism
\begin{center}
$\left(L^2(\AAZ,\tau)\otimes\mathbb{C}^2\right)\otimes\left(L^2(\AAZ,\tau)\otimes\mathbb{C}^2\right)\longrightarrow\left(L^2(\AAZ,\tau)\otimes
L^2(\AAZ,\tau)\right)\otimes\left(\mathbb{C}^2\otimes\mathbb{C}^2\right)$
\end{center}
of Hilbert spaces to push all the matrices $\{I_4=I_2\otimes I_2\,,\sigma_m\sigma_n\otimes I_2\,,I_2\otimes\sigma_m\sigma_n\,,\,\sigma_m\otimes
\sigma_n\}$ to the extreme right. Observe that $\{I_4\,,\sigma_m\sigma_n\otimes I_2\,,I_2\otimes\sigma_m\sigma_n\,,\,\sigma_j\otimes\sigma_\ell\,:\,1
\leq m<n\leq 3\,,\,1\leq j,\ell\leq 3\}$ is a linear basis of $M_4(\mathbb{C})=M_2(\mathbb{C})\otimes M_2(\mathbb{C})$. Thus, we get an obvious
injective bimodule map $\pi(\Omega^2(\mathcal{A}))\longrightarrow(\AAZ\otimes\AAZ)^{16}$. We claim that this map is onto. For that first consider
the following three elements of $\pi(\Omega^2(\mathcal{A}))$ given respectively by setting
\begin{center}
$a_{1i}=b_{1j}=c_{1k}=1\quad;\quad a_{0i}=b_{0j}=c_{0k}=1\quad;\quad b_{0j}=c_{1k}=1$
\end{center}
for all $j,k$. Now, use the simpleness of $\AAZ$ and follow the proof of Proposition $[21]$ in (\cite{CS}) (the proof of the fact that $\pi
(\Omega^1(\AAZ))=(\AAZ)^3$ and $\pi(\Omega^2(\AAZ))=(\AAZ)^4$). So, we conclude that $\pi_1(\Omega^2(\AAZ))\otimes\AAZ+\AAZ\otimes\pi_2
(\Omega^2(\AAZ))$ is a free bimodule of rank 7 over $\AAZ\otimes\AAZ$.

Now, we show that $\pi_1(d_1J_0^1(\AAZ))\otimes\AAZ\bigcap\AAZ\otimes\pi_2(d_2J_0^1(\AAZ))$ is a free $\AAZ\otimes\AAZ$-module of rank one.
Since, $\pi_j(d_jJ_0^1(\AAZ))\cong\AAZ$ for $j=1,2$, we only need to prove that $\AAZ\otimes\AAZ\subseteq\pi_1(d_1J_0^1(\AAZ))\otimes\AAZ
\bigcap\AAZ\otimes\pi_2(d_2J_0^1(\AAZ))$, the other inclusion being obvious. For any $\xi=\sum_ja_j\otimes b_j\in\AAZ\otimes\AAZ$, we can write
$\xi=\sum_j\omega_j\otimes b_j=\sum_ja_j\otimes v_j$ with each $\omega_j\in\pi_1(d_1J_0^1(\AAZ))$ and $v_j\in\pi_2(d_2J_0^1(\AAZ))$. This is
because $\pi_j(d_jJ_0^1(\AAZ))\cong\AAZ$ for $j=1,2$. Hence, $\xi\in\pi_1(d_1J_0^1(\AAZ))\otimes\AAZ\bigcap\AAZ\otimes\pi_2(d_2J_0^1(\AAZ))$
and this concludes the claim. Hence, we have the following canonical isomorphism
\begin{eqnarray*}
&  & \pi_1(d_1J_0^1(\AAZ))\otimes\AAZ+\AAZ\otimes\pi_2(d_2J_0^1(\AAZ))\\
& \cong & \frac{\pi_1(d_1J_0^1(\AAZ))\otimes\AAZ\bigoplus\AAZ\otimes\pi_2(d_2J_0^1(\AAZ))}{\pi_1(d_1J_0^1(\AAZ))\otimes\AAZ\bigcap
\AAZ\otimes\pi_2(d_2J_0^1(\AAZ))}\\
& \cong & \frac{\AAZ\otimes\AAZ\bigoplus\AAZ\otimes\AAZ}{\AAZ\otimes\AAZ}\\
& \cong & \AAZ\otimes\AAZ
\end{eqnarray*}
of $\AAZ\otimes\AAZ$-bimodules. By Lemma (\ref{denominator of 2nd form}), $\pi(dJ_0^1(\mathcal{A}))$ now becomes a free bimodule of rank 1 over
$\AAZ\otimes\AAZ$. Since, $\Omega_{D_\hslash}^2(\AAZ)\otimes\AAZ\bigoplus\AAZ\otimes\Omega_{D_\hslash}^2(\AAZ)$ is also a free bimodule
of rank 6 over $\AAZ\otimes\AAZ$, we have a canonical isomorphism
\begin{center}
$\frac{\pi_1(\Omega^2(\AAZ))\otimes\AAZ+\AAZ\otimes\pi_2(\Omega^2(\AAZ))}{\pi(dJ_0^1(\mathcal{A}))}\cong\Omega_{D_\hslash}^2(\AAZ)
\otimes\AAZ\bigoplus\AAZ\otimes\Omega_{D_\hslash}^2(\AAZ)$
\end{center}
of $\mathcal{A}$-bimodules. Since, $\Omega_D^2(\mathcal{A})=\pi(\Omega^2(\mathcal{A}))/\pi(dJ_0^1(\mathcal{A}))$, final conclusion follows from
Lemma (\ref{numerator of 2nd form}). 
\end{proof}
\bigskip

%%%%%%%%%%%%%%%%%%%%%%%%%%%%%%%%%%%%%%%%     The product system of noncommutative torus and quantum Heisenberg manifolds   %%%%%%%%%%%%%%%%%%%%%%%

\section{The case of noncommutative tori and quantum Heisenberg manifolds}

In this section we consider $\mathcal{A}_\Theta\otimes\AAZ$, where $\mathcal{A}_\Theta$ is a noncommutative $n$-torus and $\AAZ$ is a quantum
Heisenberg manifold. Recall from (\cite{CG1},\cite{CG2},\cite{CS}),
\begin{align*}
\Omega_{D_\Theta}^1(\mathcal{A}_\Theta)=\mathcal{A}_\Theta^n\quad &, \quad\Omega_{D_\Theta}^2(\mathcal{A}_\Theta)=\mathcal{A}_\Theta^{n(n-1)/2}\,,\\
\Omega_{D_\hslash}^1(\AAZ)=(\AAZ)^3\quad &, \quad\Omega_{D_\hslash}^2(\AAZ)=(\AAZ)^3\,.
\end{align*}
If $(\mathcal{A}_\Theta,\mathcal{H}_\Theta,D_\Theta)$ is not an even spectral triple (unless $n$ is even) then we apply the process described in
point (\ref{from odd to even}) in Section $(2)$ to make it even with grading operator $\gamma$. Let $D=D_\Theta\otimes 1+\gamma\otimes D_\hslash$
and $\mathcal{A}=\mathcal{A}_\Theta\otimes\AAZ$. In this section also we assume that $\{1,\hbar\mu,\hbar\nu\}$ is linearly independent over
$\mathbb{Q}$ so that $\AAZ$ is a simple algebra. Next Proposition shows that our hypothesis in Section $(3)$, or equivalently Lemma
(\ref{2 form for product system}), holds in this case also.

\begin{proposition}
$\Omega_D^2(\mathcal{A})\cong\,\Omega_{D_\Theta}^2(\mathcal{A}_\Theta)\otimes\AAZ\bigoplus\mathcal{A}_\Theta\otimes\Omega_{D_\hslash}^2(\AAZ)
\bigoplus\Omega_{D_\Theta}^1(\mathcal{A}_\Theta)\otimes\Omega_{D_\hslash}^1(\AAZ)$ as $\mathcal{A}=\mathcal{A}_\Theta\otimes\AAZ$-bimodules.
\end{proposition}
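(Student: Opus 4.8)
The plan is to deduce the decomposition directly from Lemma (\ref{2 form for product system}), verifying its hypothesis by the same free-module rank bookkeeping used in Proposition (\ref{to be used in next section}), with one Heisenberg factor now replaced by the torus $\mathcal{A}_\Theta$. By Lemma (\ref{numerator of 2nd form}) we already have
\[
\pi(\Omega^2(\mathcal{A})) = \Big(\pi_1(\Omega^2(\mathcal{A}_\Theta))\otimes\AAZ + \mathcal{A}_\Theta\otimes\pi_2(\Omega^2(\AAZ))\Big)\bigoplus\Big(\Omega_{D_\Theta}^1(\mathcal{A}_\Theta)\otimes\Omega_{D_\hslash}^1(\AAZ)\Big)
\]
as $\mathcal{A}$-bimodules, so it suffices to understand the first summand together with $\pi(dJ_0^1(\mathcal{A}))$ and then pass to the quotient $\Omega_D^2(\mathcal{A}) = \pi(\Omega^2(\mathcal{A}))/\pi(dJ_0^1(\mathcal{A}))$.

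First I would show $\pi(\Omega^2(\mathcal{A}))$ is a free bimodule and compute its rank, mirroring the expansion in Proposition (\ref{to be used in next section}). Writing $D = D_\Theta\otimes 1 + \gamma\otimes D_\hslash$ and pushing all Clifford factors to the right via the canonical Hilbert-space isomorphism, an arbitrary element of $\pi(\Omega^2(\mathcal{A}))$ is displayed with matrix coefficients among $\{\,I_N\otimes I_2,\ \gamma_m\gamma_n\otimes I_2\ (m<n),\ I_N\otimes\sigma_m\sigma_n\ (m<n),\ \gamma\gamma_j\otimes\sigma_k\,\}$ in $M_N(\mathbb{C})\otimes M_2(\mathbb{C})$, where $N=2^{\lfloor n/2\rfloor}$. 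Using $\pi_1(\Omega^2(\mathcal{A}_\Theta))=\mathcal{A}_\Theta^{1+n(n-1)/2}$ and $\pi_2(\Omega^2(\AAZ))\cong(\AAZ)^4$ from (\cite{CG1},\cite{CS}), the simplicity of $\AAZ$, and the surjectivity argument of Proposition $[21]$ in (\cite{CS}), the first summand is identified as a free bimodule of rank $4+n(n-1)/2$. The point is that $I_N\otimes I_2$ is the unique coefficient type produced by \emph{both} the torus-scalar and the Heisenberg-scalar contributions, so, exactly as in Remark (\ref{a counter example}), the intersection $\pi_1(\Omega^2(\mathcal{A}_\Theta))\otimes\AAZ\cap\mathcal{A}_\Theta\otimes\pi_2(\Omega^2(\AAZ))$ is free of rank $1$, giving $(1+n(n-1)/2)+4-1 = 4+n(n-1)/2$.

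Next I would locate $\pi(dJ_0^1(\mathcal{A}))$. By Lemma (\ref{denominator of 2nd form}) it equals $\pi_1(d_1J_0^1(\mathcal{A}_\Theta))\otimes\AAZ + \mathcal{A}_\Theta\otimes\pi_2(d_2J_0^1(\AAZ))$; since $\pi_1(d_1J_0^1(\mathcal{A}_\Theta))\cong\mathcal{A}_\Theta$ and $\pi_2(d_2J_0^1(\AAZ))\cong\AAZ$ are each free of rank $1$, the computation of Proposition (\ref{to be used in next section}) shows their intersection is all of $\mathcal{A}_\Theta\otimes\AAZ$, so $\pi(dJ_0^1(\mathcal{A}))$ is free of rank $1$ and is precisely the $I_N\otimes I_2$-scalar part inside the first summand. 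Passing to the quotient, the first summand modulo $\pi(dJ_0^1(\mathcal{A}))$ has rank $(4+n(n-1)/2)-1 = n(n-1)/2 + 3$, matching the rank of $\Omega_{D_\Theta}^2(\mathcal{A}_\Theta)\otimes\AAZ\bigoplus\mathcal{A}_\Theta\otimes\Omega_{D_\hslash}^2(\AAZ)$, so the canonical map between these free modules is an isomorphism; as $\pi(dJ_0^1(\mathcal{A}))$ meets $\Omega_{D_\Theta}^1(\mathcal{A}_\Theta)\otimes\Omega_{D_\hslash}^1(\AAZ)$ trivially, the second summand survives, and dividing the displayed decomposition by $\pi(dJ_0^1(\mathcal{A}))$ yields the claimed three-fold sum.

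The one genuinely delicate step, beyond this rank bookkeeping, is establishing the freeness and exact rank of $\pi(\Omega^2(\mathcal{A}))$: it requires the linear independence of the $\gamma\gamma_j\otimes\sigma_k$ against the $I_N\otimes I_2$, $\gamma_m\gamma_n\otimes I_2$ and $I_N\otimes\sigma_m\sigma_n$ types in $M_N(\mathbb{C})\otimes M_2(\mathbb{C})$, together with surjectivity of the coordinate map, where the simplicity of $\AAZ$ is essential.
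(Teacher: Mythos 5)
Your proposal is correct and follows essentially the same route as the paper's own (sketched) proof: identify $\pi(dJ_0^1(\mathcal{A}))$ as free of rank $1$, show $\pi(\Omega^2(\mathcal{A}))$ is free of rank $3n+4+n(n-1)/2$ by pushing the Clifford factors to the right and invoking the simplicity of $\AAZ$ together with the surjectivity arguments of \cite{CS} and \cite{CG1}, and then match ranks in the quotient. You in fact supply more detail than the paper does, in particular the explicit rank-$1$ intersection of the two scalar parts and the linear-independence caveat for the coefficient matrices, both of which are consistent with the paper's computation in the Heisenberg--Heisenberg case.
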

\begin{proof}
We only sketch the proof as computations are similar to the previous section. First note that $\Omega_{D_\Theta}^2(\mathcal{A}_\Theta)\otimes
\AAZ\bigoplus\mathcal{A}_\Theta\otimes\Omega_{D_\hslash}^2(\AAZ)$ is a free $\mathcal{A}$-bimodule of rank $\,n(n-1)/2+3$, and
$\Omega_{D_\Theta}^1(\mathcal{A}_\Theta)\otimes\Omega_{D_\hslash}^1(\AAZ)$ is free of rank $3n$. It can be shown that
\begin{center}
$\pi_1(d_1J_0^1(\mathcal{A}_\Theta))\otimes\AAZ+\mathcal{A}_\Theta\otimes\pi_2(d_2J_0^1(\AAZ))\cong\mathcal{A}_\Theta\otimes\AAZ$
\end{center}
as $\mathcal{A}$-bimodule, and hence by Lemma (\ref{denominator of 2nd form}), $\pi(dJ_0^1(\mathcal{A}))$ is a free bimodule of rank 1 over
$\mathcal{A}$. Hence, we need to show that $\pi(\Omega^2(\mathcal{A}))$ is a free $\mathcal{A}$-bimodule of rank $3n+4+n(n-1)/2$. As done in
the proof of Proposition (\ref{to be used in next section}), by writing down any arbitrary element of $\pi(\Omega^2(\mathcal{A}))$ explicitly,
one can observe that the claim follows similarly by using the simpleness of $\AAZ$ and the proof of Proposition $[5.3]$ in (\cite{CG1}) (the
proof of the fact that $\Omega_{D_\Theta}^2(\mathcal{A}_\Theta)=\mathcal{A}_\Theta^{n(n-1)/2}$ for any $n$-torus $\mathcal{A}_\Theta$).
\end{proof}
\bigskip

\section*{Acknowledgement}
Author gratefully acknowledges financial support of DST, India through INSPIRE Faculty award (Award No. DST/INSPIRE/04/2015/000901).
\bigskip


\begin{thebibliography}{999}
\bibitem{AB} Atiyah, M.F.; Bott, R. : {\it The Yang-Mills equations over Riemann surfaces}. Philos. Trans. Roy. Soc. London Ser. A 308 (1983), no. 1505, 523--615.
\bibitem{CC} Chamseddine, A.H.; Connes, A. : {\it Why the standard model}. J. Geom. Phys. 58 (2008), 38--47.
\bibitem{CG1} Chakraborty, P.S.; Guin, S. : {\it Equivalence of two approaches to Yang-Mills on noncommutative torus}. J. Noncommut. Geom. 9 (2015), no. 2, 447--471.
\bibitem{CG2} Chakraborty, P.S.; Guin, S. : {\it Yang-mills on Quantum Heisenberg Manifolds}. Comm. Math. Phys. 330 (2014), no. 3, 1327--1337.
\bibitem{CG3} Chakraborty, P.S.; Guin, S. : {\it Connes' calculus for the quantum double suspension}. J. Geom. Phys. 88 (2015), 16--29.
\bibitem{CS} Chakraborty, P.S.; Sinha, K.B. : {\it Geometry on the quantum Heisenberg manifold}. J. Funct. Anal. 203 (2003), no. 2, 425--452.
\bibitem{Con0} Connes, A. : {\it $C^*$-alg\`{e}bres et g\'{e}om\'{e}trie differentielle}. C.R. Acad. Sc. Paris Ser. A-B 290 (1980), no. 13, A599--A604.
\bibitem{Con1} Connes, A. : {\it Noncommutative differential geometry}. Inst. Hautes \'Etudes Sci. Publ. Math. No. 62 (1985), 257--360.
\bibitem{Con2} Connes, A. : {\it Noncommutative geometry}. Academic Press, Inc., San Diego, CA, 1994.
\bibitem{Con3} Connes, A. : {\it The action functional in noncommutative geometry}. Comm. Math. Phys. 117 (1988), no. 4, 673--683.
\bibitem{CDS} Connes, A.; Douglas, M.R.; Schwarz, A. : {\it Noncommutative geometry and matrix theory$\,:$ compactification on tori}. J. High Energy Phys. (1998), no. 2, Paper 3, 35 pp.
\bibitem{CM} Connes, A.; Marcolli, M. : {\it Noncommutative geometry, quantum fields and motives}. AMS Colloquium Publications 55, Hindustan Book Agency 2008.
\bibitem{CoR} Connes, A.; Rieffel, M.A. : {\it Yang-Mills for non-commutative two-tori}. Contemp. Math. 62 (1987), 237--266.
\bibitem{Dx} Dixmier, J. : {\it von Neumann algebras}. North-Holland Mathematical Library, 27. North-Holland Publishing Co., Amsterdam-New York, 1981.
\bibitem{Ds} Donaldson, S.K. : {\it Anti self-dual Yang-Mills connections over complex algebraic surfaces and stable vector bundles}, Proc. London Math. Soc. 50 (1985), no. 3, 1--26.
\bibitem{GVF} Gracia-Bond\'ia, J.; V\'arilly, J.; Figueroa, H. : {\it Elements of Noncommutative Geometry}. Birkh\"{a}user Boston, Inc., Boston, MA, 2001.
\bibitem{KPPW} Kalau, W.; Papadopoulos, N.A.; Plass, J.; Warzecha, J.M. : {\it Differential algebras in non-commutative geometry}. J. Geom. Phys. 16 (1995), 149--167.
\bibitem{Kan} Kang, S. : {\it The Yang-Mills functional and Laplace's equation on quantum Heisenberg manifolds}. J. Funct. Anal. 258 (2010), no. 1, 307--327.
\bibitem{LAN} Landi, G. : {\it An introduction to noncommutative spaces and their geometries}. LNP Monographs, 51. Springer-Verlag, Berlin, 1997.
\bibitem{Lan} Landi, G.; van Suijlekom, W.D. : {\it Noncommutative bundles and instantons in Tehran. An invitation to noncommutative geometry}. 275--353, World Sci. Publ., Hackensack, NJ 2008.
\bibitem{Lee} Lee, H.H. : {\it On the moduli space of a quantum Heisenberg manifold}. J. Funct. Anal. 263 (2012), no. 4, 941--959.
\bibitem{Rfl} Rieffel, M.A. : {\it Projective modules over higher-dimensional noncommutative tori}. Canad. J. Math. 40 (1988), no. 2, 257--338.
\bibitem{Rfl1} Rieffel, M.A. : {\it Deformation quantization of Heisenberg manifolds}. Comm. Math. Phys. 122 (1989), no. 4, 531--562.
\bibitem{Rfl2} Rieffel, M.A. : {\it Critical points of Yang-Mills for noncommutative two-tori}. J. Differential Geom. 31 (1990), no. 2, 535--546.
\bibitem{Sp} Spera, M. : {\it A symplectic approach to Yang-Mills theory for noncommutative tori}. Canad. J. Math. 44 (1992), no. 2, 368--387. 
\bibitem{Van} Vanhecke, F.J. : {\it On the product of real spectral triples}. Lett. Math. Phys. 50 (1999), no. 2, 157--162.
\end{thebibliography}
\end{document}